\DeclareMathOperator{\Q}{\mathbb{Q}}
\DeclareMathOperator{\Z}{\mathbb{Z}}
\DeclareMathOperator{\C}{\mathbb{C}}
\DeclareMathOperator{\R}{\mathbb{R}}
\DeclareMathOperator{\F}{\mathbb{F}}
\DeclareMathOperator{\D}{\mathcal{D}}
\DeclareMathOperator{\length}{length}
\DeclareMathOperator\Aut{Aut}
\DeclareMathOperator\End{End}
\DeclareMathOperator\Hom{Hom}
\DeclareMathOperator\Spec{Spec}
\newcommand{\la}{\langle} 	
\newcommand{\ra}{\rangle} 	
\newcommand{ \lie}[1]{\mathfrak{#1} } 
\newcommand{\isomto}{\overset{\sim}{\longrightarrow}}
\newcommand{ \refThm}[1]{\hyperref[#1]{Theorem \ref{#1}}}
\newcommand{ \refSec}[1]{\hyperref[#1]{Section \ref{#1}}}
\newcommand{ \refProp}[1]{\hyperref[#1]{Proposition \ref{#1}}}
\newcommand{ \refLemma}[1]{\hyperref[#1]{Lemma \ref{#1}}}
\newcommand{\refRemark}[1]{\hyperref[#1]{Remark \ref{#1}}}
\newcommand{\refDef}[1]{\hyperref[#1]{Definition \ref{#1}}}
\theoremstyle{plain}
\newtheorem{theorem}{Theorem}[section]
\newtheorem*{theorem*}{Theorem}
\newtheorem*{maintheorem*}{Main Theorem}
\newtheorem{lemma}[theorem]{Lemma}
\newtheorem{proposition}[theorem]{Proposition}
\newtheorem{definition}[theorem]{Definition}
\newtheorem*{claim*}{Claim}
\numberwithin{equation}{section}
\theoremstyle{remark}
\newtheorem{remark}[theorem]{Remark}
\font\caps=cmcsc10                    % Theorem, Lemma etc
\let\bftemp\bfseries
\renewcommand\bfseries{\normalsize\caps}
\renewenvironment{leftbar}[1][\hsize]
{%
    \MakeFramed{\hsize#1\advance\hsize-\width\FrameRestore}%
}
{\endMakeFramed}
\let\oldtocsection=\tocsection
\let\oldtocsubsection=\tocsubsection
\let\oldtocsubsubsection=\tocsubsubsection
\renewcommand{\tocsection}[2]{\hspace{0.2em}\oldtocsection{#1}{#2}}
\renewcommand{\tocsubsection}[2]{ \hspace{1em}\oldtocsubsection{#1}{#2} }
\renewcommand{\tocsubsubsection}[2]{\hspace{2em}\oldtocsubsubsection{#1}{#2}}
\DeclareMathOperator{\OB}{\mathcal{O}_B}
\DeclareMathOperator{\OBx}{\mathcal{O}_B^{\times}}
\DeclareMathOperator{\OBp}{\mathcal{O}_{B,p}}
\DeclareMathOperator{\OBxone}{\mathcal{O}_{B}^{\times,1}}
\DeclareMathOperator{\Zed}{\mathcal{Z}}
\DeclareMathOperator{\CB}{\mathcal{C}_{B}}
\newcommand{\CBok}{ \ensuremath{ \mathcal{C}_{B   {\scriptstyle  / o_k }}  }}
\newcommand{\CHR}{\ensuremath{\widehat{CH}{}^1_{\scriptscriptstyle \R} }}
\DeclareMathOperator{\CBZ}{ \mathcal{C}_{B   {\scriptscriptstyle  /  \! \Z }}  }
\title{Unitary cycles on Shimura curves and the Shimura lift II}
\address{Mathematisches Institut der Universit\"at Bonn, Endenicher Allee 60 \\ 
		53115 Bonn, Germany}
\email{sankaran@math.uni-bonn.de}
\author{Siddarth Sankaran}
\begin{document}
\date\today
%********************************************************
\begin{abstract}
We consider two families of arithmetic divisors defined on integral models of Shimura curves. The first was studied by Kudla, Rapoport and Yang, who proved that if one assembles these divisors in a formal generating series, one obtains the q-expansion of a modular form of weight 3/2. The present work concerns the Shimura lift of this modular form: we identify the Shimura lift with a generating series comprised of unitary divisors, which arose in recent work of Kudla and Rapoport regarding cycles on Shimura varieties of unitary type. In the prequel to this paper, the author considered the geometry of the two families of cycles; these results are combined with the archimedian calculations found in this work in order to establish the  theorem. In particular, we obtain new examples of modular generating series whose coefficients lie in arithmetic Chow groups of Shimura varieties.
\end{abstract}
\maketitle

\tableofcontents
\section{Introduction}

In this paper, which is the `global' counterpart to \cite{San1}, we examine the relationship between two families of arithmetic divisors defined on integral models of Shimura curves. The first of these families, which we refer to as the \emph{orthogonal special cycles}, have been studied extensively by Kudla, Rapoport and Yang. One of their main results is that if one assembles these arithmetic divisors into a formal generating series, one obtains the $q$-expansion of a modular form of weight 3/2; this remarkable fact is in some sense the most fully-realized instantiation of {Kudla's programme}, a broad set of conjectures relating generating series of arithmetic special cycles on Shimura varieties and the Fourier coefficients of modular forms, cf.\ the expository notes \cite{KudlaMSRI} for a survey.  

Our main theorem is a geometric interpretation of the \emph{Shimura lift} of this generating series. More precisely, we identify the $q$-expansion of the Shimura lift with a generating series of \emph{(arithmetic) unitary special cycles}; these latter cycles are analogues of those constructed in more recent work of Kudla and Rapoport  \cite{KRUloc, KRUglob}, who studied the intersection behaviour of such cycles in integral models of Shimura varieties attached to unitary groups of signature $(n,1)$. While the theory for these cycles is not as fully developped as in the ``orthogonal" case above, the results in the present work (which concerns the case $n=1$) contribute to a growing body of evidence relating these unitary generating series to modular forms, cf.\ e.g. \cite{KRUglob,HBY, How2,How3}.

We now give a more precise statement of our main theorem. Begin by fixing a rational indefinite quaternion algebra $B$ that is unramified at 2, and a maximal order $\OB \subset B$.  Let $\CBZ$ denote the Deligne-Mumford  stack (over $\Spec\, \Z$) that parametrizes pairs $\underline A = (A, \iota_A)$; here $A$ is an abelian surface over some base scheme $S$, and 
\[ \iota_A\colon \ \OB \to \End(A) \]
is an $\OB$-action which is assumed to satisfy a `determinant' condition, cf.\ \refDef{shCurveDef} below. This stack is an integral model of the classical Shimura curve attached to $B$. 

The \emph{orthogonal special cycles} are (more or less, cf.\ \refRemark{CohenMacauleyRmk}) defined by the following moduli problem: for an integer $n>0$, we let $\Zed^o(n)$ be the stack that parametrizes diagrams 
\[ \xi \colon A \to A, \]
where $\underline A = (A, \iota_A)$ is a point of $\CB$, and $\xi$ is an $\OB$-linear traceless endomorphism such that $\xi^2 = -n$. We may then view $\Zed^o(n)$ as a divisor on $\CBZ$ via the natural forgetful map, which we denote by the same symbol $\Zed^o(n)$. 
In \cite{KRYbook}, Kudla, Rapoport and Yang equip these cycles with explicit Green functions $Gr^o(n, v)$ depending on a parameter $v \in \R_{>0}$, and extend the construction to all $n \in \Z$ to obtain arithmetic classes
\[ \widehat{\Zed}{}^o(n, v ) \ := \ (\Zed^o(n,v), Gr^o(n,v)) \ \in \ \CHR(\CBZ)  \]
in the first arithmetic Chow group.\footnote{
Here, and throughout this paper, our arithmetic Chow groups are taken with \emph{real} coefficients, cf.\ \refSec{chowSec}.}
 They then assemble these divisors into a formal generating series, and prove the following remarkable theorem:
\begin{theorem}[Theorem A of \cite{KRYbook}] \label{KRYThmIntro}
For $\tau = u + iv \in \lie{H}$ and $ q_{\tau}  = e^{2 \pi i \tau}$, the \emph{orthogonal generating series}
\[ \widehat\Phi{}^o(\tau) \ := \ \sum_{n \in \Z} \widehat{\Zed}{}^o(n, v) \ q_{\tau}^n \ \in \ \CHR(\CBZ)\llbracket q_{\tau}^{\pm 1} \rrbracket \]
is  the $q$-expansion of a modular form of weight 3/2 for the congruence subgroup $\Gamma_0(4 D_B)$ with trivial character; here $D_B$ is the discriminant of $B$.
\end{theorem}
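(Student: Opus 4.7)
The plan is to identify $\widehat{\Phi}{}^o(\tau)$ with the central derivative $E'(\tau,0,\phi)$ of an incoherent Siegel Eisenstein series of weight $3/2$ and level $\Gamma_0(4D_B)$, built from a Schwartz function $\phi$ attached to the incoherent quadratic space of signature $(2,1)$ associated to $B$. Such a derivative is automatically a non-holomorphic modular form of the predicted weight, level and trivial character, so the theorem will reduce to a Fourier-coefficient-by-Fourier-coefficient comparison.

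To cut down from a $\CHR(\CBZ)$-valued assertion to numerical ones, I would exploit that $\CHR(\CBZ)$ is separated by a natural family of continuous $\R$-linear functionals: the degree on the generic fibre, degrees along irreducible components of fibres of bad reduction, the projection to $(1,1)$-forms on $\CB(\C)$, and arithmetic intersection with the metrized Hodge bundle $\widehat{\omega}$. Modularity of $\widehat{\Phi}{}^o(\tau)$ is equivalent to modularity of its image under each such functional. The image under the form projection is precisely a Kudla--Millson theta series for the space of traceless elements of $B$, whose modularity is automatic; the images under the degree maps on special fibres are holomorphic theta-type series whose modularity is classical. The core of the argument is therefore modularity of the arithmetic intersection series
\[ \widehat{\Phi}{}^o(\tau) \cdot \widehat{\omega} \ = \ \sum_{n \in \Z} \bigl( \widehat{\Zed}{}^o(n,v) \cdot \widehat{\omega} \bigr) \, q_\tau^n. \]

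The main obstacle is to evaluate each Fourier coefficient of this pairing and match it against the $n$-th coefficient of $E'(\tau,0,\phi)$. By arithmetic adjunction, $\widehat{\Zed}{}^o(n,v)\cdot \widehat{\omega}$ decomposes as a sum of local intersection multiplicities over all places $p \leq \infty$; by the Rallis factorization, the $n$-th Fourier coefficient of $E'(\tau,0,\phi)$ decomposes as a sum of (derivatives of) normalized local Whittaker functions. The task is then a local-to-local matching at each place. At finite $p$ unramified in $B$, the intersection reduces to a Gross--Keating-type count of optimal embeddings of quadratic orders into $\OB$. At $p \mid D_B$, one must descend to the Drinfeld--Cerednik $p$-adic uniformization of $\CBZ$ and perform a Rapoport--Zink moduli calculation. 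At the archimedean place, the integral of $Gr^o(n,v)$ against the Kähler form on $\CB(\C)$ is matched with the archimedean Whittaker derivative using the explicit theta-integral presentation of $Gr^o(n,v)$. The degenerate coefficient $n=0$ requires separate treatment, pulling in the Faltings height of $\CBZ$ and the constant terms of both series; and the coefficients $n<0$, where $\Zed^o(n)$ is empty but $Gr^o(n,v)$ contributes, correspond to purely archimedean non-holomorphic Whittaker values and must again be matched by explicit computation. The local matching at ramified primes and the archimedean Whittaker derivative are the technical heart of the proof.
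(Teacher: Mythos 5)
There is a genuine gap, and it sits exactly at the reduction step. The family of functionals you propose to separate $\CHR(\CBZ)$ --- degree on the generic fibre, degrees along irreducible components of the bad fibres, the curvature-form projection, and intersection with $\widehat\omega$ --- does \emph{not} separate points: every one of them vanishes identically on the Mordell--Weil part $\widetilde{MW} = \left( \R\widehat\omega \oplus Vert \oplus An \right)^{\perp}$ of the decomposition \eqref{ChowDecompEqn}. Indeed, a class in $\widetilde{MW}$ has degree zero on the generic fibre (pairing with $\widehat{\mathbf 1} \in Vert$), is orthogonal to the vertical classes and to $\widehat\omega$ by construction, and its associated $(1,1)$-form vanishes, since orthogonality to $An$ forces that form to be a multiple of the fixed volume form while its total mass $\deg Z_{\C}$ is zero. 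Since $\widetilde{MW} \simeq Jac(\CB_{,\Q})(\Q) \otimes \R$ is nonzero in general, your argument is silent about the $\widetilde{MW}$-component of $\widehat\Phi{}^o$, which is precisely the deepest part of Theorem \ref{KRYThmIntro}: in the proof this paper invokes (the component-by-component strategy of \cite{KRYbook} recalled after \refThm{KRYThmA}), that component is the generating series of the classes of the $\Zed^o(n)$ in the Jacobian of the generic fibre, a holomorphic and generically \emph{cuspidal} form, and its modularity is a Gross--Kohnen--Zagier type statement established via Borcherds' modularity theorem --- not by any Eisenstein series comparison.

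For the same reason your opening identification of the full class-valued series $\widehat\Phi{}^o(\tau)$ with a central derivative $E'(\tau,0,\phi)$ of an incoherent Eisenstein series cannot hold: the cuspidal $\widetilde{MW}$-part obstructs it. What your local-to-local matching does capture is the Hodge component $\la \widehat\Phi{}^o(\tau), \widehat\omega \ra$ --- this is Theorem B of Kudla--Rapoport--Yang, proved in \cite{KRYcomp} by exactly the Gross--Keating, Cerednik--Drinfeld and archimedean Whittaker computations you outline --- together with the degree, vertical and analytic components, which are Eisenstein resp.\ theta series as you say and as in \cite{KRYbook}. So for those components your route essentially reproduces the cited proof (at considerable expense), but the key missing idea is how to prove modularity of the $\widetilde{MW}$-component; without that input the theorem is not established.
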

Our construction of the \emph{unitary special cycles}, which we briefly review here, is adapted from \cite{KRUglob}. Suppose $k = \Q(\sqrt{\Delta})$ is an imaginary quadratic field, with $\Delta<0$ a squarefree integer, and let $o_k$ denote the ring of integers of $k$. Denote the non-trivial Galois automorphism of $k$ by $a \mapsto a'$. Here and throughout the entire paper, we assume that:
\begin{compactenum}[(i)]
\item $\Delta$ is even and
\item every prime dividing $D_B$ is inert in $k$.
\end{compactenum}
In particular, the second condition implies the existence of an embedding $\phi \colon o_k \to \OB$, and for the moment, we fix one such embedding. 

Let $\mathcal E^+$ denote the Deligne-Mumford (DM) stack over $\Spec(o_k)$ representing the following moduli problem: for a base scheme $S$ over $o_k$, the $S$-points form the category of tuples
\[ \mathcal E^+(S) \ = \ \{ \underline E = (E, i_E, \lambda_E) \} \]
where $E$ is an elliptic curve over $S$, together with an $o_k$-action $i_E \colon o_k \to End(E)$, and a principal polarization $\lambda_E$ whose corresponding Rosati involution ${}^*$ satisfies
\[ i_E(a)^* \ = \ i_E(a'). \]
We also require that on the Lie algebra $Lie(E)$, the action induced by $i_E$ coincides  with the action induced by the structural morphism $o_k \to \mathcal O_S$. In the notation of \cite{KRUglob}, this stack is referred to as $\mathcal M(1,0)$. 

Similarly, we define $\mathcal E^-$ to be the moduli space of tuples $\underline E = (E, i_E, \lambda_E)$ as above, except now we insist that the action on $Lie(E)$ coincides with the \emph{conjugate} of the action given by the structural morphism $o_k \to \mathcal O_S$.
Finally, we set 
\[ \mathcal E \ := \ \mathcal E^+ \ \coprod \ \mathcal E^- \] 
to be the disjoint union of these two stacks. 

Let $\CBok = \CBZ \times \Spec(o_k)$ denote the base change. Suppose $S$ is some base scheme over $\Spec(o_k)$, and that we are given points $\underline E \in \mathcal E(S)$ and $\underline A \in \CBok(S)$. For any embedding $\phi \colon o_k \to \OB$, we define the space of \emph{special homomorphisms}:
\[ \Hom_{\phi}(\underline E, \underline A) \ := \ \left\{ \lie{y} \in Hom(E, A) \ | \ \lie y \circ i_E(a) \ = \ \iota_A( \phi (a) ) \circ \lie y, \ \ \text{for all } a \in o_k \right\}. \]
As discussed in \refSec{UcyclesSec}, this space comes equipped with an $o_k$-hermitian form $h_{E,A}^{\phi}$.

For $m \in \Z_{>0}$, the \emph{unitary special cycle} $\Zed(m, \phi)$ is  (the DM stack over $o_k$ representing) the following moduli problem: for a scheme $S$ over $o_k$, the $S$-points of $\Zed(m, \phi)$ comprise the category of tuples
\[ \Zed(m, \phi)(S) \ = \{ (\underline E, \underline A, \lie{y})\}\]
where $\underline E \in \mathcal E(S), \underline A \in \CBok(S)$, and $\lie{y} \in \Hom_{\phi}(E, A)$ such that $h_{E, A}^{\phi}(\lie y, \lie y ) = m$. 
Abusing notation, we use the same symbol $\Zed(m, \phi)$ to denote the divisor on $\CBok$ obtained via the natural forgetful morphism $\Zed(m, \phi) \to \CBok$. We equip these divisors with Green functions $Gr^u(m, \phi, \eta)$ that depend on a real parameter $\eta >0$, and whose construction parallels that of the  Green functions $Gr^o(n, v)$ for the orthogonal cycles.  We thereby obtain classes
\[ \widehat\Zed(m, \phi, \eta) \ := \ (\Zed(m, \phi), \ Gr^u(m, \phi, \eta))  \ \in \ \CHR(\CBok). \]
We also describe how to extend the definitions to all integers $m \in \Z$. In addition, we shall require the following rescaled variants: 
\begin{equation*} 
 \widehat\Zed{}^*(m, \phi; \eta) := (\Zed^*(m, \phi) , \  Gr^{u*}(m, \phi, \eta)), 
\end{equation*}
where
\[ \Zed^*(m, \phi):=   \Zed \left( \frac{ m}{(|m|,D_B)}, \ \phi\right), \ \   \text{and } Gr^{u*}(m, \phi, \eta):= Gr^u \left(  \frac{ m}{(|m|,D_B)}, \ \phi, \  (|m|, D_B)^{1/2} \cdot \eta\right). \]
This leads us to the \emph{unitary generating series}: for $w = \xi + i \eta \in \lie{H}$ and $q_w = e^{2 \pi i w}$, we set
\begin{align*} \widehat \Phi^u(w) := \widehat\Zed(0, \eta) \ + \ \frac{1}{4 h(k)} \sum_{m \neq 0} \sum_{[\phi] \in Opt / \OBxone} \ \left( \widehat\Zed(m, \phi,\eta) \ + 
\ \widehat\Zed{}^*\left( m , \phi, \eta \right)  \right) q_w^m,
\end{align*}
where the sum on $[\phi]$ is over a set of equivalence classes of \emph{optimal embeddings} $o_k \to \OB$, cf.\ \eqref{optEmbDefEqn}.

The main theorem of this paper describes the relationship between the orthogonal and unitary generating series in terms of the \emph{Shimura lift}, first introduced by Shimura \cite{Shim} and later extended to the non-holomorphic setting by Shintani and Niwa, cf.\ \cite{Shin,Niwa}. The Shimura lift is an operation which takes as its input a modular form $F$ of half-integral weight $\kappa/2$, together with a square-free parameter $t \in \Z_{>0}$, and produces a modular form $Sh_t(F)$ of even integral weight $\kappa - 1$. 

\begin{maintheorem*}Suppose $\Delta<0$ is an even squarefree  integer, and every prime dividing $D_B$ is inert in $k = \Q(\sqrt{\Delta})$. Let
\[ \widehat{\Phi}^o_{/o_k} (\tau) \ = \ \sum_{n \in \Z} \widehat{\Zed}{}^o(n, v)_{/o_k} \ q^{n}_{\tau} \ \in \ \CHR(\CBok) \llbracket q_{\tau}^{\pm 1} \rrbracket \]
denote the base change of the orthogonal generating series to $\Spec(o_k)$.
Then, for $w = \xi + i \eta \in \lie{H}$, 
\[ Sh_{|\Delta|} \left(  \widehat{\Phi}^o_{/o_k} \right) (w) \ = \ \widehat\Phi{}^u(w). \]
In particular, $\widehat\Phi{}^u$ is (the $q$-expansion of) a modular form of weight 2. 
\qed
\end{maintheorem*}

The proof  hinges on a decomposition
\[ \CHR(\CBok) \  = \ \widetilde{MW} \ \oplus \ \left( Vert \oplus \R \widehat\omega \right) \ \oplus \ An    \]
cf.\ \refSec{chowSec}, which is orthogonal for the Arakelov-Gillet-Soul\'e intersection pairing. We may accordingly decompose our generating series into components, and our strategy for the proof of the main theorem is to show that the Shimura lift formula holds one component at a time. More precisely, we recall that Kudla, Rapoport and Yang prove the modularity of the orthogonal generating series (\refThm{KRYThmIntro}) by showing that each of its components taken individually is the $q$-expansion of a known modular form. Our main theorem amounts to saying that for each component, the Shimura lift of this form can be identified with the corresponding component of $\widehat\Phi{}^u$. 

The $\widetilde{MW}$ and $Vert$ components depend only on the geometry of the cycles, and not the Green functions. This was studied in the prequel to the present work, and in particular the desired relations for these components follows immediately from \cite[Corollary 4.11]{San1}.

The `analytic' component $An$ is spanned by classes of the form $\widehat f = (0, f)$, where $f$ is a smooth function on the complex points of $\CBok$, and can be thought of as being `vertical at $\infty$'. In \refSec{anSec}, we prove the main theorem for these components, which involves computing relations between the two families of Green functions, and explicit formulas derived from Niwa and Shintani's presentation of the Shimura lift via theta series. 

Finally, the desired theorem for the `Hodge components', obtained by pairing against the so-called `Hodge bundle' $\widehat\omega$, is proven in \refSec{hodgeSec}, and largely follows from  considerations similar to the previous case. 
\subsection*{Acknowledgements}
This paper is essentially an updated version of my Ph.\ D.\ thesis. I am grateful to my advisor S. Kudla for his constant encouragement. I would also like to thank B. Howard for his detailed and helpful comments on previous versions of this paper.  This work was supported by the SFB/TR 45 `Periods, Moduli Spaces, and Arithmetic of Algebraic Varieties' of the DFG (German Research Foundation), and by NSERC (National Sciences and Engineering Research Council of Canada).

\subsection*{Notation}  \label{notationSec}
\begin{compactitem}
\item $B$ is a fixed indefinite rational quaternion algebra, with maximal order $\OB$, and discriminant $D_B > 1$. Denote the reduced trace and reduced norm on $B$ by $Nrd$ and $Trd$ respectively. 
\item $\OBxone$ is the subgroup of $\OBx$ consisting of elemenets of reduced norm 1. 
\item $k = \Q(\sqrt{\Delta})$ is an imaginary quadratic field, with ring of integers $o_k$, such that (i) $\Delta < 0$ is a squarefree \emph{even} integer and (ii) every prime dividing $D_B$ is \emph{inert} in   $k$. We denote the non-trivial Galois automorphism on $k$ by $a \mapsto a'$. 
\item Having fixed the generator $\sqrt{\Delta} \in k$, we let $\sigma_0 \colon k \to \C$ be the complex embedding such that $\sigma_0 (\sqrt{\Delta}) \ = \ |\Delta|^{1/2} i$. Let $\sigma_1 = \sigma_0'$ denote the conjugate embedding. 
\item By definition, an embedding $\phi\colon o_k \hookrightarrow \OB$ is called \emph{optimal} if 
\begin{equation} \label{optEmbDefEqn}
\phi(o_k) \ = \ \phi(k) \cap \OB. 
\end{equation}
Note $\OBxone$ acts on the set of optimal embeddings by conjugation, i.e., for $\xi \in \OBxone$, 
\[ (\xi \cdot \phi) (a) \ :=  \ Ad_{\xi} \circ \phi (a) \ = \ \xi \cdot \phi(a) \cdot \xi^{-1}, \qquad \text{for all } a \in o_k.\]
Let $Opt$ denote the set of optimal embeddings, and $Opt / \OBxone$ denote the set of optimal embeddings taken up to $\OBxone$-equivalence. 
\item  $\lie{H} = \left\{ z \in \C, \Im(z) > 0 \right\} $ is the usual complex upper-half plane, and $\lie{H}^{\pm} = \C \setminus \R$ is the union of the upper- and lower- half planes.
\end{compactitem}

\section{Shimura curves and orthogonal special cycles}

We begin this section by reviewing the theory of arithmetic Chow groups that we will use, and then we recall the definitions and basic facts from \cite{KRYbook} regarding Shimura curves and the generating series of orthogonal special cycles.
\subsection{Chow groups for arithmetic surfaces} \label{chowSec} {\ \\}

We briefly recall the construction of the first arithmetic Chow groups for arithmetic surfaces, mainly for the purposes of setting up notation; basic references include \cite{GilletSoule, Lang} for the case of schemes, and \cite{Vistoli, KRYbook} for the necessary modifications in the case of Deligne-Mumford (DM) stacks. 

Let $O_L$ be the ring of integers of a number field. For us, an \emph{arithmetic surface}  over $O_L$ is a regular, geometrically irreducible, Deligne-Mumford stack $\mathcal X$ of dimension 2, together with a proper and flat morphism $\mathcal X \to Spec(O_L)$. Let $\Sigma$ denote the set of embeddings $L \hookrightarrow \C$; for any DM stack $\mathcal Y$ over $O_L$ and $\sigma \in \Sigma$, we let $\mathcal Y(\C^{\sigma})$ denote the complex orbifold attached to the base change $\mathcal Y \times_{\sigma, o_k} Spec(\C).$
\begin{definition} \label{arDivDef}
 An $(\R)$-\emph{arithmetic divisor} on an arithmetic surface $\mathcal X$ is a tuple 
\[  \widehat \Zed = \left( \mathcal Z,   Gr(\mathcal Z) \right) \] where 
\begin{compactenum}[(i)]
\item $\mathcal Z = \sum n_i \mathcal Z_i$ is a formal $\R$-linear combination of pairwise distinct prime divisors $ \Zed_i$, i.e. each $\Zed_i$ is an irreducible closed substack which is \'etale locally a Cartier divisor;
\item  $Gr(\Zed) = (Gr^{\sigma}(\Zed))_{\sigma \in \Sigma}$ is a tuple indexed by elements of $\Sigma$, where for each embedding $\sigma \in \Sigma$, we have a \emph{Green function} $Gr^{\sigma}(\Zed)$ on $\mathcal X(\C^{\sigma})$.  By definition, this means $Gr^{\sigma}(\Zed)$ is a $(0,0)$-current on $\mathcal X(\C^{\sigma})$  which is  smooth  on the complement of $ \Zed(\C^{\sigma})$, has logarithmic singularities along $\Zed(\C^{\sigma})$, and satisfies Green's equation (as  currents\footnote{ More precisely, we suppose that we are given a `uniformization' $\mathcal X(\C^{\sigma}) = [ \Gamma \backslash X]$ presenting $\mathcal X(\C^{\sigma})$ as an orbifold; here $X$ is a Riemann surface and $\Gamma \subset \Aut(X)$ is a discrete group which contains a finite-index subgroup that acts without fixed points. We then interpret \eqref{GrGenEqn} to mean an equality of $\Gamma$-invariant currents on $X$, cf.\ \cite[\S 2.3]{KRYbook}.  }
 on $\mathcal X(\C^{\sigma})$):
\begin{equation} \label{GrGenEqn}
 d d^c \ Gr^{\sigma }(\Zed) \ + \ \delta_{\Zed(\C^{\sigma})} \ = \ \left[ \omega^{\sigma} \left({\Zed} \right) \right]
 \end{equation}
for some \emph{smooth} $(1,1)$-form $\omega^{\sigma}(\Zed)$;
\item let $\iota\colon \C \to \C$ denote complex conjugation, which induces a map $\iota \colon \mathcal X(\C^{\iota \circ \sigma}) \ \to \ \mathcal X(\C^{\sigma})$.
We then require that 
\[ Gr^{\iota \circ \sigma }(\Zed) \ =\ \iota^* Gr^{\sigma}(\Zed), \qquad \qquad \text{for all } \sigma \in \Sigma.  \]
\end{compactenum} 
\end{definition}

An important class of examples is the \emph{principal divisors}: if $f$ is a rational function on $\mathcal X$, then
\[ \widehat{div}(f)  \ = \ \left( div(f), \ ( \log |\sigma(f)|^2 )_{\sigma \in \Sigma} \right) \]
is an arithmetic divisor. 

\begin{definition} We define the first arithmetic Chow group $\CHR(\mathcal X)$, with \emph{real coefficients}, to be the quotient of the space of arithmetic divisors modulo the real subspace spanned by the principal divisors. 
\end{definition}

This space admits a perfect pairing, the so-called \emph{(Arakelov-Gillet-Soul\'e) intersection pairing}
\[ \la \cdot, \cdot \ra \colon \ \CHR(\mathcal X) \times \CHR(\mathcal X) \ \to \ \R, \]
whose construction we briefly recall, at least in the following special case: suppose  $\widehat Z_1 = (Z_1, Gr(Z_1))$ and $ \widehat Z_2 = (Z_2, Gr(Z_2)) $ are prime divisors equipped with Green functions, such that their supports on the generic fibre are disjoint. 
Then the intersection pairing is given by
\begin{align}
 \la \widehat Z_1, \widehat Z_2 \ra \ = \ \sum_{ \substack{\lie{p} \subset O_L  \\ \text{prime}}}  \  \sum_{x \in Z_1 \cap Z_2 (\F_{\lie{p}}^{alg})}  \log (N(\lie{p}))   \frac{ \length(\mathcal O_{ Z_1 \cap Z_2, x} ) } { \# \Aut(x) }  \notag \\ 
 + \frac12 \ \sum_{\sigma \in \Sigma} \ \int_{\mathcal X(\C^{\sigma})} Gr^{\sigma}(Z_1) \cdot \omega^{\sigma}(Z_2) + Gr^{\sigma}(Z_2) \delta_{Z_1(\C^{\sigma})}.
 \end{align} 
This definition can be extended to the space of arithmetic divisors by linearity, and with the aid of a `moving lemma' in this context, it can be shown that one obtains a well-defined pairing on $\CHR(\mathcal X)$, cf.\ \cite{GilletSoule}. 

The intersection pairing allows us to define a convenient decomposition of $\CHR(\mathcal X)$, following \cite[\S 4.1]{KRYbook}. For each $\sigma \in \Sigma$, we fix a volume form $\mu^{\sigma}$ on $\mathcal X(\C^{\sigma})$, and we fix a metrized line bundle 
\[ \widehat\omega \in \widehat{Pic}(\mathcal X) \otimes \R \]
such that for each $\sigma$, the corresponding first Chern class is $\mu^{\sigma}$. Abusing notation, we denote again by $\widehat\omega$ the image of $\widehat\omega$ under the natural map $\widehat{Pic}(\mathcal X)\otimes \R \to \CHR(\mathcal X)$. 

Denote by $\widehat{\mathbf 1} = (0, \mathbf 1) \in  \CHR(\mathcal X)$ the class corresponding to the constant function $1$ on each component $\mathcal X(\C^{\sigma})$. Let $Vert$ denote the real subspace of $\CHR(\mathcal X)$ spanned by $\widehat{\mathbf 1}$, together with classes of the form $(\mathcal Y, 0)$, where $\mathcal Y$ is an irreducible component of a reducible fibre $\mathcal X_{\lie{p}}$ at a finite prime $\lie{p}$. 

Next, we let $An$ denote the subspace spanned by classes of the form $\hat{\mathbf f} = (0, (f^{\sigma})_{\sigma \in \Sigma})$, where each $f^{\sigma}$ is a $C^{\infty}$ function on $\mathcal X(\C^{\sigma})$ such that 
\begin{compactenum}[(i)]
\item $ f^{\iota \circ \sigma} =  \iota^* \circ f $, and 
\item for each $\sigma \in \Sigma$, we have
\[ \int_{\mathcal X(\C^{\sigma})} \ f^{\sigma} \cdot \mu^{\sigma} = 0. \]
\end{compactenum}
If we set 
\[ \widetilde{MW} \ := \ \left( \R \widehat\omega \oplus Vert  \oplus An \right)^{\perp}, \]
we obtain an orthogonal decomposition into three subspaces:
\begin{equation} \label{ChowDecompEqn}
\CHR(\mathcal X) \ = \  \widetilde{MW} \ \oplus \ \left( \R \widehat\omega \oplus Vert \right) \  \oplus  \ An .
\end{equation}
For convenience, we refer the collection of elements $\{ \widehat \omega,\widehat{\mathbf 1} ,  (\mathcal Y,0), \hat{\mathbf f} \}$ as above as a \emph{good spanning set} for the latter two summands. For a class $\widehat \Zed \in \CHR(\mathcal X)$, we shall take the \emph{components of} $\widehat \Zed$ to mean the collection of pairings
\[ \la \widehat \Zed, \lie{Y} \ra \] 
where $\lie{Y}$ varies among the good spanning vectors, together with the orthogonal projection $ (\widehat\Zed)_{MW} \in \widetilde{MW} $. A class is then completely determined by its components.

Finally, we observe that the space $\widetilde{MW}$ admits a more geometric interpretation. Let 
\[ res_{L}  \colon \ \CHR(\mathcal X) \to CH^1(\mathcal X_L) \otimes_{\Z} \R , \qquad (Z, Gr(Z) ) \mapsto Z_{L} \]
denote the `restriction-to-the-generic-fibre' map. Note that if $\widehat{Z} = (Z, Gr(Z)) \in \widetilde{MW}$, then
\[ 0 = \la \widehat Z, \ \widehat{\mathbf 1} \ra  \ = \ \frac{[L:\Q]}{2}  \deg( Z_L)  \]
and so $res_{L}$ defines a map
\begin{equation} \label{resLJacEqn}
 res_{L} \colon \widetilde{MW} \to Jac(\mathcal X_L)(L) \otimes_{\Z} \R,
\end{equation}
 where $Jac(\mathcal X_L)(L)$ denotes the $L$-valued points of the Jacobian variety of $\mathcal X_L$. This latter map is an \emph{isomorphism}, which moreover identifies the intersection pairing with the negative of the N\'eron-Tate height pairing on $Jac(\mathcal X_L)(L) \otimes \R$, cf.\ \cite{Hriljac}.

\subsection{Shimura curves} {\ \\}

We begin this section by introducing (an arithmetic model of) the  Shimura curve attached to $B$, defined as a moduli space of abelian surfaces.
\begin{definition}  \label{shCurveDef}
Let $\CBZ$ denote the moduli space over $Spec(\Z)$ that attaches to a scheme $S $ the category whose objects are tuples 
\[ \CBZ(S) = \{ \underline A = (A, \iota_A) \}; \]
here $A$ is an abelian scheme over $S$ of relative dimension 2, and $ \iota_A \colon\OB \to End(A) $ is an $\OB$ action. We further require that for any $b \in \OB$, the characteristic polynomial for the action of $\iota(b)$ on $Lie(A)$ is given by
\[ ch( \iota(b)|_{Lie(A)}) = X^2 - Trd(b) X + Nrd(b) \in \mathcal O_S[X]. \]
Morphisms in this category are $\OB$-equivariant isomorphisms of abelian schemes over $S$.
\end{definition}

We record a few facts about $\CBZ$ here. 
\begin{theorem}[\cite{KRYbook}, Proposition 3.1.1] The moduli problem $\CBZ$ is representable by a Deligne-Mumford stack (also denoted by $\CBZ$), which is regular, proper and flat over $\Spec(\Z)$ of relative dimension 1, and smooth over $\Spec \Z[ D_B^{-1}] $.
\end{theorem}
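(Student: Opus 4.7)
The plan is to verify each of the four listed properties in turn, following the standard strategy for PEL-type moduli problems. Representability by a Deligne-Mumford stack rests on two observations: first, that the automorphism group of any object $(A,\iota_A)$ is finite, since $\iota_A$ forces the endomorphism algebra $\End^0_{\OB}(A)$ to be a finite-dimensional division algebra over $\Q$, so its compatible automorphism group is finite; and second, that for any sufficiently divisible integer $N$, the analogous moduli problem with full level-$N$ structure is rigid and is representable by a scheme via Mumford's theorem for moduli of polarized abelian schemes (the polarization here is obtained by fixing a suitable principal element of $\OB$). Passing to the finite-group quotient recovers $\CBZ$ as a DM stack.

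Smoothness over $\Spec \Z[D_B^{-1}]$ follows from Morita equivalence: for any prime $p \nmid D_B$ one has $\OB \otimes \Z_p \cong M_2(\Z_p)$, so given a point $\underline{A}$ over an $S/\Z_{(p)}$, choosing a rank-one idempotent $e \in M_2(\Z_p)$ and setting $E := e A$ yields an isomorphism $A \cong E \otimes_{\Z} \OB$ as $\OB$-modules, and the determinant condition is automatically satisfied by the standard action. Hence the base change of $\CBZ$ to $\Spec \Z[D_B^{-1}]$ is identified with the moduli stack of elliptic curves, whose smoothness of relative dimension one is classical.

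The technical heart of the theorem is the local analysis at primes $p \mid D_B$, where $\OBp$ is the maximal order in the quaternion division algebra $B_p$. Here I would invoke the local model diagram: deformations of a geometric point $\underline{A}_0$ over $\overline{\F}_p$ are controlled, via Grothendieck-Messing theory together with the Kottwitz-type determinant condition in \refDef{shCurveDef}, by deformations of a suitable filtration on the covariant Dieudonn\'e module. The resulting local model can then be computed using Drinfeld's description of special formal $\OBp$-modules of height $4$ and dimension $2$; one obtains local rings of the form $W(\F_{p^2})[[t]]$, which immediately yields regularity, flatness over $\Z_p$, and relative dimension $1$ at such primes.

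Finally, properness follows from the valuative criterion applied to DM stacks: an $\OB$-abelian surface over the fraction field of a DVR has potentially good reduction, because the $\OB$-action rigidifies the connected N\'eron model and prevents any toric or unipotent contribution from appearing in the geometric special fibre; the $\OB$-action on the resulting abelian scheme then extends uniquely by the N\'eron mapping property. Of the four steps, I would expect the regularity verification at $p \mid D_B$ to be the main obstacle, as it requires the nontrivial input of Drinfeld's analysis of special formal modules; the remaining items are fairly routine once representability is in hand. The detailed execution is carried out in Section 3.1 of \cite{KRYbook}.
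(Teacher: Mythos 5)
You should first note that the paper does not prove this statement at all: it is quoted verbatim from \cite{KRYbook} (Proposition 3.1.1), so the only comparison available is with the argument in that reference, and your overall architecture does match it (rigidification by level structure for DM representability, Serre--Tate plus Morita theory at primes $p \nmid D_B$, Drinfeld's theory of special formal $\OBp$-modules at $p \mid D_B$, and potential good reduction for properness). However, two of your steps are wrong as written. First, the Morita reduction over $\Spec \Z[D_B^{-1}]$ is only a local statement about $p$-divisible groups: $\OB \otimes \Z[D_B^{-1}]$ is a nontrivial Azumaya algebra (its Brauer class is ramified exactly at the primes dividing $D_B$, all invertible), so there is no global isomorphism with $M_2$, and an idempotent $e \in M_2(\Z_p) \cong \OB \otimes \Z_p$ does not act on $A$ itself, only on $A[p^{\infty}]$; moreover $\CBZ \times \Spec \Z[D_B^{-1}]$ is certainly not the moduli stack of elliptic curves (it is proper, has no cusps, and its generic fiber is a different curve). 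The correct route is Serre--Tate: $A[p^{\infty}] \cong X \oplus X$ with $X$ of height $2$ and dimension $1$, whose universal deformation ring is formally smooth of relative dimension $1$, and this local statement is all that smoothness over $\Z[D_B^{-1}]$ requires.

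Second, and more seriously since you single this step out as the heart of the matter: at $p \mid D_B$ the completed local rings are \emph{not} all of the form $W(\F_{p^2})[[t]]$. If they were, $\CBZ$ would be smooth over $\Z_p$ at such $p$, contradicting the statement itself (smoothness is asserted only over $\Z[D_B^{-1}]$) and the known Cherednik--Drinfeld picture, in which the special fiber at $p \mid D_B$ is a union of rational curves crossing at the superspecial points. Drinfeld's theorem identifies the deformation space of a special formal $\OBp$-module of height $4$ with the Deligne formal model of the $p$-adic upper half-plane (base-changed to the unramified quadratic extension), so at the crossing points the completed local ring has the shape $W(\F_{p^2})[[x,y]]/(xy - p)$ up to unramified twist: this is regular, flat over $\Z_p$, and of relative dimension $1$, but not smooth. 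So regularity does follow from the approach you propose, but from this computation rather than the one you state; as written, your claimed local structure both overshoots the theorem and misses exactly the phenomenon (the non-smooth fibers at ramified primes) that makes this case the delicate one.
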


The complex points $\CBZ(\C)$ can be described via \emph{complex uniformization}, as follows. Fix once and for all an isomorphism 
\begin{equation} \label{BRisomM2}
B_{\R} \isomto M_2(\R),
\end{equation}
 and for any $z = x+iy \in \lie{H}^{\pm}$, let 
\begin{equation}
J_z \ := \  \frac{1}{y} \begin{pmatrix} x & -x^2 - y^2 \\ 1 & -x \end{pmatrix},
\end{equation}
so that $J_z^2 = -1$. Then for any $z \in \lie{H}^{\pm}$, we may produce a point $\underline A_z = (A_z, \iota_z) \in \CBZ(\C)$ as follows. We take $A_z = B_{\R} / \OB$ as a real torus, with the action $\iota_z$ of $\OB$ given by left-multiplication; the complex structure on $A_z$ is given by \emph{right} multiplication by $J_z$. 

Conversely, a point $(A, \iota) \in \CBZ(\C)$ admits an $\OB$-linear isomorphism $A \simeq B_{\R} / \OB$ of real tori. A complex structure $h$ on $B_{\R}$ which commutes with the left-multiplication action of $\OB$ is necessarily given by right multiplication by some element $J \in B_{\R}$ with $J^2 = -1$. Under the isomorphism \eqref{BRisomM2}, there is a unique $z \in \lie{H}^{\pm}$ such that $J = J_z$.

By carefully tracking the choices made in the discussion above, we obtain the following fact, cf.\ \cite[ \S 3.2 ]{KRYbook}.

\begin{proposition}
The map $z \mapsto (A_z, \iota_z)$ induces an isomorphism
\begin{equation} \left[ \OBx \big\backslash \lie{H}^{\pm} \right] \ \isomto \ \CBZ(\C). 
\end{equation}
\qed
\end{proposition}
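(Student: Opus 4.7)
The plan is to establish the bijection on isomorphism classes of objects and then match automorphism groups. First, I would verify that $z \mapsto (A_z, \iota_z)$ defines a point of $\CBZ(\C)$. Right multiplication by $J_z$ on $B_{\R}$ commutes with the left $\OB$-action, so $A_z = B_{\R}/\OB$ inherits the structure of a complex torus with $\OB$-action. To upgrade $A_z$ to an abelian surface I would use the indefiniteness of $B$ to produce an element $\delta \in \OB$ with $\delta^2 \in \Z_{<0}$, from which one builds a Riemann form $E_\delta$ on $B_{\R}$ polarizing $A_z$. The determinant condition is then a direct computation: $Lie(A_z)$ is a $2$-dimensional complex vector space carrying a $B_{\R}$-module structure, and on any such module $\iota(b)$ acts with reduced characteristic polynomial $X^2 - \mathrm{Trd}(b)X + \mathrm{Nrd}(b)$.

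For surjectivity on objects, I would take $(A, \iota) \in \CBZ(\C)$ and consider the lattice $\Lambda = H_1(A, \Z)$, a left $\OB$-module free of rank $4$ over $\Z$. Because $B$ is indefinite and $\OB$ is maximal, every locally free left $\OB$-module of rank one is globally free (the quaternionic class-number-one statement, via Eichler's strong approximation theorem), and hence $\Lambda \cong \OB$ as left $\OB$-modules. Fixing such an identification, the complex structure on $A \simeq B_{\R}/\OB$ is an $\R$-linear endomorphism $J$ of $B_{\R}$ commuting with left $\OB$-multiplication and satisfying $J^2 = -1$. The double centralizer theorem identifies the commutant of left-multiplication-by-$B$ in $\End_{\R}(B_{\R})$ as right-multiplication-by-$B$; thus $J$ is right multiplication by some $\widetilde J \in B_{\R}$ with $\widetilde J^2 = -1$. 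Under $B_{\R} \simeq M_2(\R)$, such $\widetilde J$ are parametrized by $\lie{H}^{\pm}$, producing the desired $z$.

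For the remaining ambiguity, a different choice of isomorphism $\Lambda \cong \OB$ differs from the first by right multiplication by some $\xi \in \OBx$, and this replaces $\widetilde J$ by $\xi \widetilde J \xi^{-1}$, which on the parameter $z \in \lie{H}^{\pm}$ is exactly the M\"obius action of $\xi$ transported via $B_{\R} \simeq M_2(\R)$. Hence $(A_z, \iota_z) \cong (A_{z'}, \iota_{z'})$ if and only if $z$ and $z'$ are $\OBx$-equivalent. The same analysis at the level of automorphisms shows that $\Aut(A_z, \iota_z)$ is exactly the stabilizer of $z$ in $\OBx$, matching the automorphism group on the stack-quotient side.

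The main technical obstacle is the freeness statement $\Lambda \cong \OB$: without indefiniteness of $B$ this can fail, and even with it one must invoke the structure theory of modules over orders with some care. A secondary concern is bookkeeping the sign and scaling conventions so that the action of $\OBx$ recovered from the centralizer argument coincides with the left M\"obius action implicit in $[\OBx \backslash \lie{H}^{\pm}]$; the explicit formula for $J_z$ in the statement is calibrated for precisely this. Once these are handled, the rest is a direct unwinding of the complex-analytic description.
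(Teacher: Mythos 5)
Your argument is correct and follows essentially the same route as the paper, which only sketches the two constructions (the assignment $z \mapsto (A_z,\iota_z)$ and the converse identification of the complex structure as right multiplication by some $J_z$) and defers the details to \cite[\S 3.2]{KRYbook}. The points you fill in explicitly --- polarizability of $A_z$, the determinant condition, the freeness $H_1(A,\Z)\cong \OB$ via Eichler's theorem for indefinite $B$, and the commutant argument identifying $J$ with a right multiplication --- are exactly the inputs the paper leaves implicit, so there is no substantive divergence.
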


A certain metrized line bundle $\widehat\omega^o$, known as the  \emph{Hodge bundle}, will play a particularly important role in this paper, so we take the opportunity here to make our acquaintance.  Let $\mathscr A / \CB$ denote the universal abelian variety, with unit section $\epsilon \colon \CB \to \mathscr A$. We then set
\[ \omega^o \ :=  \ \epsilon^* \Omega^2_{\mathscr A / \CB} \in Pic(\CBZ). \]

We may equip the the Hodge bundle with the metric $|| \cdot ||$ described in \cite[\S 3.3]{KRYbook} (as the specific form of this metric plays almost no role in the present paper, we will not define it here), and obtain a class
\[ \widehat\omega^o = (\omega^o, ||\cdot||) \in \widehat{Pic}(\CBZ), \]
which we also view as an arithmetic divisor $\widehat\omega^o \in \CHR(\CBZ)$ via the natural map
\[ \widehat{Pic}(\CBZ)  \otimes \R \to \CHR(\CBZ). \]

\subsection{Orthogonal special cycles} {\ \\}

In this section, we recall the construction of  orthogonal special cycles and their Green functions.

\begin{definition}
Fix an integer $n>0$. Let $\Zed^o(n)^{\sharp}$ denote the DM stack over $Spec(\Z)$ representing the following moduli problem; to a scheme $S$, we associate the category of tuples
\[ \Zed^o(n)^{\sharp}(S) = \{ (\underline A, x) \} \] 
where $\underline A = (A, \iota) \in \CB(S)$ and $x \in End_{\OB}(A)$ is traceless, commutes with the $\OB$-action, and satisfies $x^2 + n=0$. 
\end{definition}

We may view $\Zed^o(n)^{\sharp}$ as a cycle on $\CB$ via the forgetful map. 

\begin{remark} \label{CohenMacauleyRmk}
As shown in \cite{KRpadic}, these cycles may contain 0-dimensional embedded components in fibres at primes $p|D_B$. To rectify this defect, we replace $\Zed^o(n)^{\sharp}$ by its Cohen-Macauleyfication $\Zed^o(n)$, so that $\Zed^o(n) \to \CB$ is a relative divisor. From the point of view of intersection theory, the cycles $\Zed^o(n)^{\sharp}$ and $\Zed^o(n)$ are indistinguishable, cf.\ \S 4 of \emph{loc.\  cit.} $\diamond$
\end{remark}

These cycles admit \emph{complex uniformizations}:
\begin{proposition}[cf.\ \S 3.4 of \cite{KRYbook}] \label{orthCpxUnifProp}
Let
\[ \Omega^o(n) := \left\{ \xi \in \OB \ | \ Trd(\xi) = 0, \ Nrd(\xi) = n \right\}, \]
and note that $\OBx$ acts on this set by conjugation. 
There is an isomorphism 
\[  \left[ \OBx \backslash \coprod_{\xi \in \Omega^o(n) } D_{\xi} \right] \ \simeq \ \Zed^o(n)(\C), \]
where 
\[ D_{\xi} = \left\{ z \in \lie{H}^{\pm}  \ | \ J_z \xi = \xi J_z \right\} \subset \lie{H}^{\pm}. \]
This isomorphism is compatible with the complex uniformization $\CB(\C) \simeq [ \OBx \backslash \lie{H}^{\pm}]$.
\end{proposition}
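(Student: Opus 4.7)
The plan is to transfer the moduli description of $\Zed^o(n)^{\sharp}(\C)$ directly through the complex uniformization $[\OBx \backslash \lie{H}^{\pm}] \isomto \CB(\C)$ established in the previous proposition. Since the Cohen-Macauleyfication operation differs from the original stack only at primes $p \mid D_B$ (cf.\ \refRemark{CohenMacauleyRmk}), the complex points of $\Zed^o(n)$ and $\Zed^o(n)^{\sharp}$ coincide, so it suffices to work with $\Zed^o(n)^{\sharp}(\C)$.

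The key computation will be to identify $\End_{\OB}(A_z)$ for $\underline A_z = (B_{\R}/\OB,\iota_z)$. Since $\OB$ is a maximal order in $B$, its commutant inside the real-linear endomorphisms of the torus $B_{\R}/\OB$ that preserve the lattice $\OB$ consists precisely of the right multiplications $R_{\xi}$ by elements $\xi \in \OB$; this is the standard identification coming from the Morita-type argument for maximal orders. For $R_{\xi}$ to define a morphism of complex tori — that is, to respect the complex structure $R_{J_z}$ on $A_z$ — a direct computation gives $R_{\xi} \circ R_{J_z} = R_{J_z} \circ R_{\xi}$ if and only if $J_z \xi = \xi J_z$ inside $B_{\R}$. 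Translating the moduli conditions on $x \in \End_{\OB}(A_z)$: the tracelessness of $x$ (on $\mathrm{Lie}(A_z)$, equivalently on the reduced norm pairing via the determinant condition) becomes $Trd(\xi) = 0$, and $x^2 + n = 0$ becomes $\xi^2 = -n$, which under $Trd(\xi) = 0$ is equivalent to $Nrd(\xi) = n$. Hence $\xi \in \Omega^o(n)$, and the fibre of $\Zed^o(n)^{\sharp}(\C) \to \CB(\C)$ over $z$ is in natural bijection with $\{\xi \in \Omega^o(n) : z \in D_{\xi}\}$.

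To assemble these fibrewise descriptions into the stated isomorphism of orbifolds, I will verify compatibility with the $\OBx$-actions. For $b \in \OBx$ corresponding to $\gamma \in GL_2(\R)^+$ via \eqref{BRisomM2}, one has $J_{bz} = b J_z b^{-1}$, and the isomorphism $A_z \isomto A_{bz}$ underlying the action on $\CB(\C)$ is given (up to a choice of convention) by right multiplication by $b^{-1}$. A short conjugation computation shows that under this isomorphism, the endomorphism $R_{\xi}$ of $A_z$ corresponds to $R_{b\xi b^{-1}}$ of $A_{bz}$, and that $J_{bz}$ commutes with $b\xi b^{-1}$ precisely when $J_z$ commutes with $\xi$. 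Thus the natural $\OBx$-action $b \cdot (z,\xi) = (bz,\, b\xi b^{-1})$ on $\coprod_{\xi \in \Omega^o(n)} D_{\xi}$ matches the action on pairs $(\underline A, x)$, and the induced map
\[ \left[ \OBx \backslash \coprod_{\xi \in \Omega^o(n)} D_{\xi} \right] \ \longrightarrow \ \Zed^o(n)^{\sharp}(\C) \]
is a bijection on isomorphism classes; a further check that the respective stabilizers agree (both being $\{b \in \OBx : bz = z,\ b\xi = \xi b\}$) gives the claimed isomorphism of complex orbifolds, compatibly by construction with the uniformization of $\CB(\C)$.

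The main obstacle is essentially bookkeeping: correctly tracking how the $\OBx$-action on $\lie{H}^{\pm}$ transports both the complex structure $J_z$ and the special endomorphism $\xi$ between the moduli-theoretic and uniformized pictures. Once the endomorphism-ring computation $\End_{\OB}(A_z) = \{\xi \in \OB : J_z \xi = \xi J_z\}$ is in hand, no further non-trivial input is required beyond this translation of group actions.
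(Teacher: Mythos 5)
Your argument is correct, and it is exactly the standard computation behind this statement: the paper itself gives no proof but defers to \cite[\S 3.4]{KRYbook}, where the uniformization is obtained in the same way, by identifying $\End_{\OB}(A_z)$ with right multiplications $R_{\xi}$, $\xi \in \OB$ commuting with $J_z$, translating the conditions $Trd(\xi)=0$, $\xi^2=-n$ into $\xi \in \Omega^o(n)$, and checking $\OBx$-equivariance via $J_{bz}=bJ_zb^{-1}$ (note only that $\OBx$ contains units of reduced norm $-1$, so its image is not in $GL_2(\R)^+$; this is harmless since you work on $\lie{H}^{\pm}$). Your preliminary remark that $\Zed^o(n)(\C)=\Zed^o(n)^{\sharp}(\C)$, since the Cohen--Macauleyfication only changes fibres at primes dividing $D_B$, is also the right reduction.
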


Next, we describe the Green functions constructed in \cite[ \S 3.5]{KRYbook}. Let $B^0 = \{ b \in B \ | \ Trd(b) = 0 \}$, which can be equipped with the quadratic form of signature $(1,2)$ given by the restriction of the reduced norm on $B$ to this subspace. Let $\D^o(B^0_{\R})$ denote the space of negative-definite (real) planes in $B^0_{\R} = B^0 \otimes_{\Q} \R$. 
It is straightforward  to check that the map 
\[ \lie{H} \isomto \D^o(B^0_{\R}) , \qquad z \mapsto J_z^{\perp} \]
is an isomorphism. As $J_{\overline z} = - J_z$, we obtain a 2-to-1 map
\[ \lie{H}^{\pm} \to \D^o(B^0_{\R}) , \qquad z \mapsto J_z^{\perp}. \]
Now suppose $v \in B^0_{\R}$, and $\zeta = J_z^{\perp} \in \D^o(B^0_{\R})$. Let $v_{\zeta}$ denote the orthogonal projection of $v$ onto $\zeta$, and set
\begin{equation} \label{RoDefEqn}
 R^o(v, z) := - 2 Nrd( v_{\zeta}), \qquad \text{where } \zeta = J_z^{\perp}.
\end{equation}
By construction, $v \in \zeta^{\perp}$ if and only if $R^o(v, z) = 0$; in particular, for  $\xi \in B^0$, we have $z \in \D_{\xi}$ if and only if $R^o(\xi, z) = 0$.

 For $r \in \R_{>0}$, let
\[ \beta_1(r) = \int_1^{\infty} e^{-ur} \frac{du}{u}. \]
As $r$ approaches 0, we have $\beta_1(r) = - \log r  - \gamma + O(r)$, where $\gamma$ is the Euler-Mascheroni constant.

The Green function $Gr^o(n,v)$ for the cycle $\Zed^o(n)$, which depends on a real parameter $v \in \R_{>0}$, is then given by
\begin{equation} \label{GrODef}
 Gr^o(n,v)(z) := \sum_{\xi \in \Omega^o(n)} \ \beta_1 \left( 2 \pi v R^o(\xi, z) \right);
 \end{equation}
 here we are viewing $Gr^o(n,v)$ as a $\OBx$-invariant function on $\lie{H}$.
In particular, we have the following equality of currents on $\CBZ(\C) \simeq [\OBx \backslash \lie{h}]$:
\begin{equation} \label{oSCgrEqn}
 d d^c [Gr^o(n, v)]  + \delta_{\Zed^o(n)(\C)} = [ \psi^o(n, v) c_1(\widehat\omega^o)], 
\end{equation}
where
\[ \psi^o(n, v)(z) = \sum_{\xi \in \Omega^o(n)} \left( 4 \pi v \left\{ R^o(\xi, z) + 2 Nrd(\xi) \right\} - 1 \right) e^{- 2 \pi v R^o(\xi, z)} \]
and $c_1(\widehat\omega^o)$ is the first Chern form of the Hodge bundle $\widehat\omega^o$. 

\subsection{The orthogonal generating series of Kudla-Rapoport-Yang} {\ \\}

Following \cite[ \S 3.5]{KRYbook}, we define the terms in the orthogonal generating series as follows.
For $n \in \Z_{>0}$ and $v \in \R_{>0}$, let
\[ \widehat{\Zed^o}(n, v) \ := \  ( \Zed^o(t), Gr^o(n,v) ) \ \in  \ \CHR(\CBZ)_{\R}. \]
For $n < 0$, note that the right hand side of \eqref{GrODef} defines a smooth function on $\CBZ(\C)$. Therefore, in this case we  define
\[ \widehat{\Zed^o}(n, v) := (0, Gr^o(n, v)). \]
Finally, for the constant term, we set
\[ \widehat{\Zed^o}(0, v) := - \widehat{\omega}^o -(0, \log v)  - (0, \log D_B). \]
\begin{theorem}[Theorem A, \cite{KRYbook}]
	\label{KRYThmA}
	Let $\tau = u + iv \in \lie{H}$. Define the ``\emph{orthogonal generating series}"
	\[ \Phi^o(\tau)  := \sum_{n \in \Z} \widehat{\Zed^o}(n,v) \ q^n,\]
	where $q = e^{2 \pi i \tau}$. Then $\Phi^o(\tau)$ is the $q$-expansion of a modular form for the group $\Gamma_0(4D_B)$,  of weight 3/2 and trivial character.
\end{theorem}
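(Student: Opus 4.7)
The plan is to exploit the orthogonal decomposition
\[ \CHR(\CBZ) \ = \ \widetilde{MW} \ \oplus \ \left( \R \widehat\omega^o \oplus Vert \right) \ \oplus \ An \]
from \eqref{ChowDecompEqn} and establish modularity of $\Phi^o(\tau)$ one component at a time; since the pairing is perfect and a class is determined by its components, it suffices to show that for each element $\lie{Y}$ of a good spanning set, the numerical series $\sum_n \la \widehat{\Zed}{}^o(n,v), \lie{Y} \ra \, q^n$ (resp.\ the $\widetilde{MW}$-projection $\sum_n (\widehat{\Zed}{}^o(n,v))_{MW}\, q^n$) is the $q$-expansion of a modular form of weight $3/2$ on $\Gamma_0(4 D_B)$.

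For the $\widetilde{MW}$-component, by \eqref{resLJacEqn} this reduces to showing modularity of the generating series $\sum_n \, Z^o(n)_{\Q}\, q^n$ with values in $\mathrm{Jac}(\CB_\Q)(\Q)\otimes \R$; this is the classical Kudla--Millson theorem, proved by constructing a theta kernel attached to the Schwartz form on $B^0_\R$ of signature $(1,2)$ and verifying (using Poisson summation / the Weil representation) that the pairing with any linear functional on $\mathrm{Jac}$ produces a genuine modular form. For the analytic component, one pairs with a class $\widehat{\mathbf f} = (0,f)$ where $f$ is smooth of mean zero; the pairing is $\tfrac12\int_{\CB(\C)} f \cdot dd^c Gr^o(n,v)$, and the key observation is that $Gr^o(n,v)$ as defined in \eqref{GrODef} via $\beta_1(2\pi v R^o(\xi,z))$ is precisely the $n$-th Fourier coefficient of the Kudla--Millson theta integral with Green-current values. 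Summing over $n$ re-assembles a theta series in $\tau$ whose modularity follows directly from the transformation law of the Weil representation, since $\beta_1$ is built from the standard Gaussian and the error function $\xi$ of the Kudla--Millson form.

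For the Hodge and vertical components, pair with $\widehat\omega^o$ and with $(\mathcal Y, 0)$ for $\mathcal Y$ a component of a bad fibre $\CB_{\lie p}$ at a prime $p \mid D_B$. The pairing with $\widehat\omega^o$ reduces, via \eqref{oSCgrEqn}, to the degrees $\deg Z^o(n)_\Q$ plus an archimedean contribution coming from $\psi^o(n,v)$; these degrees are known to be Fourier coefficients of an Eisenstein series of weight $3/2$, and the precise choice of constant term $\widehat{\Zed}{}^o(0,v) = -\widehat\omega^o - (0,\log v) - (0,\log D_B)$ is dictated by matching the constant Fourier coefficient of that Eisenstein series (including the $-\log v$ non-holomorphic piece and the $-\log D_B$ correction accounting for ramification). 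For the vertical classes, the intersection $\la \Zed^o(n), \mathcal Y \ra$ reduces to a sum of local intersection multiplicities on the formal neighborhoods of the supersingular points on $\mathcal Y$; applying the $p$-adic uniformization of $\CBZ$ at $p \mid D_B$ by the Drinfeld upper half plane and invoking the local computation of Kudla--Rapoport for quasi-endomorphisms identifies these multiplicities with weighted representation numbers by ternary quadratic forms, whose generating series is again a weight $3/2$ form by Siegel's theorem / theta series for definite ternary forms.

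The main obstacle is the vertical piece at primes $p \mid D_B$: one must execute the $p$-adic uniformization carefully (including the switch from $\Zed^o(n)^{\sharp}$ to its Cohen--Macaulayfication noted in \refRemark{CohenMacauleyRmk}), compute the length of the local deformation ring of a pair $(\underline A, x)$ with $x^2+n=0$, and then package the resulting representation numbers plus the archimedean $\beta_1$-integrals into a single (non-holomorphic) modular form matching the Eisenstein series coming from the Hodge component. Once all four types of components are shown individually to have $q$-expansions of weight $3/2$ modular forms on $\Gamma_0(4D_B)$, the theorem follows from the fact that a class is determined by its components.
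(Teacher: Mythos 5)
You should first note that this statement is not proved in the paper at all: it is Theorem A of \cite{KRYbook}, quoted as an external input, and the only thing the text adds is an explanation of what modularity \emph{means} for a series with coefficients in the infinite-dimensional space $\CHR(\CBZ)$, namely that each component with respect to the decomposition $\widetilde{MW} \oplus (\R\widehat\omega^o \oplus Vert) \oplus An$ and a good spanning set is the $q$-expansion of a known modular form. Your overall strategy -- prove modularity component by component -- is therefore exactly the strategy the paper attributes to Kudla--Rapoport--Yang, and your treatment of the analytic and vertical pieces is broadly in line with what they do (the analytic component is indeed the theta integral against $\psi^o(n,v)\,c_1(\widehat\omega^o)$, cf.\ the kernel $\varTheta^o$ used later in \refSec{anSec}, and the vertical components at $p \mid D_B$ are computed via $p$-adic uniformization and the local results of \cite{KRpadic}, yielding ternary theta series).

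However, two of your components are handled with the wrong (or insufficient) input, and these are genuine gaps if your sketch were taken as a proof. For the $\widetilde{MW}$-component, the Kudla--Millson theorem only gives modularity of the \emph{cohomology-valued} (equivalently, degree-valued) generating series; the statement needed here is modularity of the classes $\Zed^o(n)_{\Q}$ in $Jac(\CB_{\Q})(\Q)\otimes\R$, which is the Gross--Kohnen--Zagier phenomenon and in \cite{KRYbook} is deduced from Borcherds' modularity theorem for Heegner-type divisors on the generic fibre -- pairing with ``a linear functional on $\mathrm{Jac}$'' and invoking Kudla--Millson does not produce this. For the Hodge component, the pairing $\la \widehat{\Zed}{}^o(n,v), \widehat\omega^o\ra$ is an arithmetic height $h_{\widehat\omega^o}(\Zed^o(n))$ plus an archimedean term, and the height term is a Faltings-height computation on the CM abelian surfaces parametrized by $\Zed^o(n)$; it is \emph{not} determined by $\deg \Zed^o(n)_{\Q}$. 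The identification of this generating series with the \emph{derivative} at the near-central point of a weight $3/2$ Eisenstein series (not with the Eisenstein series whose coefficients are the degrees) is the main technical content of \cite{KRYcomp, KRYbook}, and the precise constant term $\widehat{\Zed}{}^o(0,v) = -\widehat\omega^o - (0,\log v) - (0,\log D_B)$ is calibrated against that derivative, including the $\la\widehat\omega^o,\widehat\omega^o\ra$ self-intersection. So your outline reproduces the architecture of the KRY proof, but as written it elides the two deepest ingredients (Borcherds modularity and the arithmetic height/Eisenstein derivative identity), which cannot be recovered from the more elementary tools you invoke.
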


As the coefficients of $\Phi^o(\tau)$ lie in the infinite-dimensional space $\CHR(\CBZ)$, we need to explain the meaning of this theorem more precisely. 
Recall that in \refSec{chowSec}, we described a decomposition
\[ \CHR(\CBZ)= (Vert \oplus \R \widehat\omega^o ) \ \oplus \ An \ \oplus \ \widetilde{MW}, \]
together with a spanning set for the first two factors. This allows us to decompose our generating series $\widehat\Phi{}^o$ into components (given by the generating series formed by pairing the coefficients against the vectors in this spanning set, together with the projections to the $\widetilde{MW}$-component). Each of these generating series has  scalar coefficients with the exception of the $MW$-component, whose coefficients are valued in a finite-dimensional vector space.  The content of this theorem is that each of these generating series is the $q$-expansion of a modular form in the usual sense, and theorem is proved by identifying the generating series with the $q$-expansions of known modular forms. 

Moreover, the proof of this theorem reveals that the components arising from elements of $Vert$ and the $\widetilde{MW}$-component are \emph{holomorphic}.

\section{Unitary cycles and their Green functions} 

\subsection{Unitary special cycles} \label{UcyclesSec} {\ \\ }
 
In this section, we recall the construction of the unitary cycles as in \cite[\S2]{San1}; they are the analogues of \emph{unitary (Kudla-Rapoport) special cycles}, as described in \cite{KRUglob}, in the Shimura curve setting. 
Throughout this section, we fix an optimal embedding $\phi \colon o_k \to \OB$. We also fix an element $\theta \in \OB$ such that $\theta^2 = - D_B$. We may assume, without loss of generality, that
\[ Trd( \theta \ \phi(\sqrt{\Delta}) ) > 0\]
since (i) the assumption that $B$ is indefinite implies that the above trace is non-zero, and (ii) we may replace $\theta$ by $-\theta$ if necessary.

Suppose $\underline A = (A, \iota) \in \CB(S)$, for some base scheme $S$ over $\Spec(o_k)$. Then, by \cite[\S 3.1]{How1}, there exists a unique principal polarization $\lambda_A^0$ on $A$ such that 
\begin{equation} \label{lambda0Eqn}
(\lambda^0_A)^{-1} \circ \iota(b)^{\vee} \circ \lambda^0_A = \iota(\theta^{-1}  \ b^{\iota} \ \theta  ) , \qquad \text{for all } b \in \OB.
\end{equation}
By \cite[Lemma 2.4]{San1}, the isogeny
\begin{equation} \label{newPolEqnDef}
 \lambda_{A, \phi} := \lambda_{A}^0 \ \circ \ \iota \left( \theta \ \phi(\sqrt{\Delta}) \right) ;
\end{equation}
defines another (non-principal) polarization on $A$; note that the Rosati involution $*$ defined by this latter polarization satisfies
\[ \iota \left( \phi (a) \right)^* \ = \ \iota( \phi(a')) , \qquad \text{for all } a \in o_k .\]

Next, let  $\mathscr{E}^+$ denote the moduli stack over $\Spec(o_k)$ whose $S$-points parametrize triples 
\[ \mathscr{E}^+(S) = \Big\{ \underline E = (E, i_E, \lambda_E) \Big\};\] here $E/S$ is an elliptic curve, $i_E \colon o_k \to \End_S(E)$ is an $o_k$-action, and $\lambda_E$ is a principal polarization. We also require that
\begin{compactenum}[(i)]
\item the induced action $i_E \colon o_k \to \End_S(Lie(E))$ agrees with the action given via the structural morphism $o_k \to \mathcal O_S$, and
\item for any $a \in o_k$, we have $\lambda_E^{-1} \circ i_E(a)^{\vee} \circ \lambda_E = i_E(a')$.
\end{compactenum}
Similarly, we set $\mathscr{E}^-$ to be the moduli space of tuples $(E, i_E, \lambda_E)$ as above, except that we insist the $o_k$ act on $Lie(E)$ via the \emph{conjugate} of the structural morphism. 

Finally, we take
\[ \mathscr E \ := \ \mathscr E^+ \ \coprod \ \mathscr E^- \]
to be the disjoint union of these two stacks. 

For a base scheme $S$ over $\Spec(o_k)$, suppose we are given two points $\underline E \in \mathscr{E}(S)$ and $\underline A \in \CB(S)$. We then have the space of \emph{special homomorphisms}
\[ \Hom_{\phi}(\underline E, \underline A) \ := \ \left\{ y \in \Hom(E, A) \ | \ y \circ i_E(a) \ = \ \iota_A( \phi (a) ) \circ y, \ \ \text{for all } a \in o_k \right\}. \]
This space comes equipped with an $o_k$-hermitian form $h_{E,A}^{\phi}$ defined by 
\[ h_{E,A}^{\phi} (\lie{s}, \lie{t}) \ := \ (\lambda_E)^{-1} \circ {\lie{t}}^{\vee} \circ \lambda_{A, \phi} \circ \lie s \ \in \End(E, i_E) \simeq o_k. \]
Let $q^{\phi}_{E, A}(x) := h^{\phi}_{E,A}(x,x)$ denote the corresponding quadratic form. 

\begin{definition}[Unitary special cycles] 
For an integer $m \in \Z_{>0}$, let $\Zed(m, \phi)$ denote the DM stack over $\Spec(o_k)$, whose $S$-points parametrize tuples
\[ \Zed(m, \phi)(S)  \ = \ \left\{ \ (\underline E, \  \underline A,  \ y  ) \ \right\} \]
where $ \underline E \in \mathscr E(S)$, $\underline A \in \CB(S)$, and $y \in \Hom_{\phi}(\underline E, \underline A)$ is a special homomorphism such that $q_{E,A}^{\phi}(y) = m$. 
\end{definition}

By \cite[Proposition 2.6]{San1}, the forgetful morphism $\Zed(m, \phi) \to \CBok$ allows us to view $\Zed(m, \phi)$ as a \emph{divisor} on $\CBok := \CBZ \times \Spec(o_k)$; abusing notation, we shall denote the divisor by the same symbol, and hope that the reader will be able to discern from the context whether a given instance of the notation refers to the stack or the divisor.

\subsection{Complex uniformizations and Green functions}  \label{UGrnFnSec} { \ \\ \\ }
 Fix an embedding $\phi \colon o_k \to \OB$, which induces an embedding $\phi \colon k \to B$. We start by describing an alternative complex uniformization of $\CB$. Let $B^{\phi} = (B, (\cdot, \cdot)_{\phi})$ denote the hermitian space whose underlying vector space is $B$ endowed with
the $k$-vector space structure given by the action
\[ a \cdot v := \phi(a) v, \qquad \forall a \in k, v \in B. \]
The hermitian form $(\cdot, \cdot)_{\phi}$ is given by the formula
\begin{equation} \label{hermFormEqn}
 (x, y)_{\phi} := \frac{\Delta}{2} Trd(x \ y^{\iota}) \ + \ \frac{\sqrt{\Delta}}{2} Trd( \phi(\sqrt{\Delta}) \ x \ y^{\iota}) \in k.  
\end{equation}
Let $Q(x) =  (x,x)_{\phi} = \Delta Nrd(x)$ denote the corresponding quadratic form, and note that the $k_{\R}$-hermitian space $B^{\phi}_{\R} := B^{\phi} \otimes_{\Q} \R \simeq M_2(\R)$ is of signature $(1,1)$ with respect to this form. 

Let $\D(B^{\phi}_{\R})$ denote the space of non-isotropic $k_{\R}$-lines in $B^{\phi}_{\R}$. There is a transitive action of $B_{\R}^{\times}$ on $\D(B^{\phi}_{\R})$ 
given by \emph{right} multiplication:
\[ \gamma \cdot \zeta := \zeta \gamma^{-1}\subset B_{\R} \]
Recall that we had fixed an embedding $\sigma_0\colon k \to \C$; let $\mathbb I_k := \sqrt{\Delta} \otimes |\Delta|^{-1/2} \in k_{\R}$ denote the square root of $-1$ in $k_{\R}$ such that $\sigma_0(\mathbb I_k) = i$.
It is then straightforward to check that we have a $B_{\R}^{\times}$-equivariant isomorphism
\begin{equation} \label{altCpxUnifMapEqn}
\lie{H}^{\pm} \isomto \D(B^{\phi}_{\R}) \qquad z \mapsto  \zeta:= \left\{ v \in B_{\R} \ | \ v J_z = - \phi(\mathbb I_k) v \right\}.
\end{equation}
Without loss of generality, we may normalize the map \eqref{altCpxUnifMapEqn} so that $i$ is identified with the line $\phi(k) \otimes \R \subset B_{\R}$. With this choice, $\lie{H}^+$ is identified with the subset $\D(B^{\phi}_{\R})^-\subset \D(B^{\phi}_{\R})$ consisting of negative-definite lines.  
In particular, we obtain an alternative uniformization 
\[ \CBZ(\C) \simeq [ \OBxone \big\backslash \D(B^{\phi}_{\R})^- ],\]
where $\OBxone$ is the subgroup of $\OBx$ consisting of elements of norm 1, acting on $\D(B_{\R}^{\phi})^-$ by right multiplication.

The geometric meaning of this isomorphism is as follows. Any complex abelian surface $A \in \CBZ(\C)$ admits an isomorphism $A \simeq B_{\R} / \OB$ of real tori. The embedding $\phi$ then determines a \emph{distinguished} complex structure on $B_{\R}$, namely right-multiplication by $-\phi(\mathbb I_k)$, which commutes with the action of $\OB$. Any other complex structure, in particular the one arising from the complex structure of $A$, is then determined by the line in $B_{\R}$ on which it agrees with the distinguished one. Hence, after possibly applying an automorphism to ensure that this line is negative, we see that the space of such lines in turn parametrizes the points of $\CBZ(\C)$. 

Upon fixing coordinates, an easy calculation  yields the following description of the first Chern class of $\widehat\omega^o$ under this uniformization, cf.\ \cite[(3.16)]{KRYcomp}.
\begin{lemma} \label{hodgeUnifLemma}
Fix a basis $\{ e, f \}$ of $B^{\phi}_{\R}$ such that $(e,e)_{\phi}  = -(f,f)_{\phi} = 1$ and $(e,f)_{\phi} = 0$. We then obtain an isomorphism
\[ U^1 := \{ \lie{Z} \in \C \ \mid \ |\lie{Z}| < 1 \} \ \isomto \D(B^{\phi}_{\R})^-, \qquad \lie{Z} \mapsto {\rm span}_{k_{\R}}(\lie{Z}e + f). \]
With respect to this coordinate, we have
\begin{equation} \label{chern1Eqn}
 c_1(\widehat \omega^o) = \frac{i}{\pi} \frac{d\lie{Z} \wedge d \overline{\lie{Z}}}{(1-|\lie{Z}|^2)^2}.
\end{equation}
 \qed
\end{lemma}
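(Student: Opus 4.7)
The plan is to first verify that the map $\lie{Z}\mapsto {\rm span}_{k_{\R}}(\lie{Z}e+f)$ is a biholomorphism onto $\D(B^{\phi}_{\R})^-$, then transfer the known formula for $c_1(\widehat\omega^o)$ on $\lie{H}^{\pm}$ through the change of coordinates to $U^1$.

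First, I would check the parametrization. Since $(\cdot,\cdot)_{\phi}$ is $k_{\R}$-hermitian with $k_{\R}\simeq\C$ via $\sigma_0$, a direct expansion using $(e,e)_{\phi}=1$, $(f,f)_{\phi}=-1$, $(e,f)_{\phi}=0$ gives
\[ (\lie{Z}e+f,\lie{Z}e+f)_{\phi} \ = \ |\lie{Z}|^2-1. \]
Thus ${\rm span}_{k_{\R}}(\lie{Z}e+f)$ is negative-definite iff $|\lie{Z}|<1$. Every negative $k_{\R}$-line in $B^{\phi}_{\R}$ has non-zero $f$-component (else it would sit inside $k_{\R}e$, which is positive), so after rescaling to make the $f$-coefficient equal to $1$, there is a unique such representative, which identifies $U^1$ with $\D(B^{\phi}_{\R})^-$.

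Second, I would make the identification with the $\lie{H}$-uniformization explicit. Given $z\in\lie{H}^+$, the corresponding line $\zeta_z=\{v\in B_{\R}\mid vJ_z=-\phi(\mathbb I_k)v\}$ from \eqref{altCpxUnifMapEqn} must equal ${\rm span}_{k_{\R}}(\lie{Z}(z)e+f)$ for a unique $\lie{Z}(z)\in U^1$. Translating the eigenvector equation for right-multiplication by $J_z$ into coordinates in the basis $\{e,f\}$ yields a linear condition that pins down the ratio $\lie{Z}(z)$ as a Möbius transformation in $z$; after the normalization in the excerpt sending $i$ to the line $\phi(k)\otimes\R$, this is the usual Cayley-type map, a biholomorphism $\lie{H}^+\isomto U^1$.

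Third, I would invoke the already-established formula for the Hodge bundle on the $\lie{H}$-model, namely $c_1(\widehat\omega^o)=\tfrac{1}{4\pi}\tfrac{dx\wedge dy}{y^2}$ in the KRY normalization of \S 3.3 of \cite{KRYbook}, equivalently $c_1(\widehat\omega^o)=\tfrac{i}{2\pi}\tfrac{dz\wedge d\bar z}{(z-\bar z)^2/(-i)}$ up to the standard constants. Pulling this back through $z\mapsto\lie{Z}(z)$ and using that under any biholomorphism $\lie{H}^+\to U^1$ induced by an $SU(1,1)$-type identification, the Poincaré metric transforms as
\[ \frac{dz\wedge d\bar z}{(z-\bar z)^2} \ \longleftrightarrow \ \frac{d\lie{Z}\wedge d\overline{\lie{Z}}}{(1-|\lie{Z}|^2)^2} \]
(up to the sign/constant fixed by the Cayley transform), delivers the claimed formula $c_1(\widehat\omega^o)=\frac{i}{\pi}\frac{d\lie{Z}\wedge d\overline{\lie{Z}}}{(1-|\lie{Z}|^2)^2}$.

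The main obstacle is bookkeeping: one must carefully track the constant (and the sign) through the change of coordinates, since the overall factor $\frac{i}{\pi}$ depends on the precise normalization of the metric on $\widehat\omega^o$ from \cite[\S 3.3]{KRYbook} together with the specific identification $z\mapsto \lie{Z}(z)$ fixed by the normalization ``$i\mapsto \phi(k)\otimes\R$.'' A cleaner alternative, which I would use as a cross-check, is to note that both sides are $U(1,1)$-invariant $(1,1)$-forms on $U^1$ and hence proportional; the proportionality constant can then be determined by evaluating both forms at the single point $\lie{Z}=0$ (where the tangent space is easily identified and the Hodge metric is computed from the period integral on the universal abelian surface corresponding to that line).
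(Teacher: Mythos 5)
Your first step is fine and complete: $(\lie{Z}e+f,\,\lie{Z}e+f)_{\phi}=|\lie{Z}|^2-1$, every negative line has nonzero $f$-component, so $\lie{Z}\mapsto{\rm span}_{k_{\R}}(\lie{Z}e+f)$ does identify $U^1$ with $\D(B^{\phi}_{\R})^-$. The paper itself gives no argument beyond citing \cite[(3.16)]{KRYcomp}, so your overall plan (transfer the known $\lie{H}$-model formula, or pin down the invariant form by a single evaluation) is exactly the intended ``easy calculation.''

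The gap is in the one place where the lemma has actual content, namely the constant. The value $\tfrac{1}{4\pi}\tfrac{dx\wedge dy}{y^2}$ you quote is the Chern form of the Hodge bundle of a universal \emph{elliptic curve} (weight one). Here $\omega^o=\epsilon^*\Omega^2_{\mathscr A/\CB}$ is the determinant of the rank-two Hodge bundle of the QM abelian surface, i.e.\ it corresponds to weight-\emph{two} forms: the natural metric of \cite[\S 3.3]{KRYbook} satisfies $\|s\|^2\doteq y^2$ for a trivializing section, so $c_1(\widehat\omega^o)=-dd^c\log\|s\|^2=\tfrac{1}{2\pi}\tfrac{dx\wedge dy}{y^2}$ on $\lie{H}$. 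Since any biholomorphism $\lie{H}\isomto U^1$ is an isometry for the Poincar\'e metrics, this transfers to
\[ \frac{1}{2\pi}\,\frac{4\,dx'\wedge dy'}{(1-|\lie{Z}|^2)^2} \;=\; \frac{i}{\pi}\,\frac{d\lie{Z}\wedge d\overline{\lie{Z}}}{(1-|\lie{Z}|^2)^2}, \]
which is \eqref{chern1Eqn}; starting from $\tfrac{1}{4\pi}$ you would instead land on $\tfrac{i}{2\pi}(\cdots)$, half the claimed form, and that factor of two is not harmless bookkeeping: it is precisely what makes the Green's-equation computation in \refProp{ddcUnitary} and the polar-coordinate expression $\tfrac{2}{\pi}\tfrac{r\,dr\wedge d\theta}{(1-r^2)^2}$ used in the Hodge-component section come out right. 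So either correct the $\lie{H}$-model input to $\tfrac{1}{2\pi}\tfrac{dx\wedge dy}{y^2}$ (weight two), or actually carry out your proposed evaluation at $\lie{Z}=0$ via the period integral of the QM abelian surface — that is where the factor of two is decided, and it cannot be absorbed into ``up to standard constants.'' (Your parenthetical alternative expression $\tfrac{i}{2\pi}\,dz\wedge d\bar z/((z-\bar z)^2/(-i))$ is also off — it is purely imaginary times the real form — which is another sign the normalization needs to be computed rather than recalled.)
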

We now say a few words about polarizations. Let $\underline A_{\zeta} \in \CBZ(\C)$ denote the abelian surface corresponding to $\zeta \in \D(B_{\R}^{\phi})^-$, and  recall that fixing an element $\theta \in \OB$ with $\theta^2 = -D_B$ determines a principal polarization $\lambda^0 = \lambda^0_{A_{\zeta}}$ on $A_{\zeta}$  as in \eqref{lambda0Eqn}. Then the Riemann form $\{ \cdot , \cdot \}_{\lambda^0}$ on $H_1(A_{\zeta}) = \OB$ corresponding to $\lambda^0$ is given by
\[ \{ a, b \}_{\lambda^0} \ = \ Trd \left( \ a \ b^{\iota} \ \theta^{-1} \right). \]
Thus, if we set 
$
 \lambda_{A, \phi} := \lambda^0 \ \circ \ \iota \left( \theta \ \phi(\sqrt{\Delta}) \right),
$
then we find that the corresponding Riemann form is 
\begin{equation} \label{RiemFormEqn}
 \{ a, b \}_{\lambda_{A,\phi}} \ = \ Trd \left( \phi(\sqrt{\Delta}) \   a \ b^{\iota}  \right) \ = \ tr_{o_k/\Z}  \left( (\phi(\sqrt{\Delta}) a, \ b )_{\phi} \right) .
\end{equation}

We now turn to the complex uniformization of the cycles $\Zed(m,\phi)$. Recalling that $\mathcal E = \mathcal E^+ \coprod \mathcal E^-$, we have a corresponding decomposition
\[ \Zed(m, \phi) \ = \ \Zed^+(m, \phi)  \ \coprod \ \Zed^-(m, \phi) \]
of stacks over $\Spec(o_k)$. We also recall our convention that $\Zed^{\pm}(m, \phi)(\C^{\sigma_0})$ denotes the complex points of $\Zed^{\pm}(m, \phi)$, where we view $\C$ as an $o_k$-algebra via the fixed embedding $\sigma_0 \colon k \to \C$.

\begin{proposition} \label{cpxUnifProp}
For a fractional ideal $\lie{a}$ of $o_k$, and an integer $m>0$, let 
\[ \Omega^+(m, \lie{a}, \phi):= \left\{ y \in \phi(\lie{a})^{-1} \OB \ \Big{|}  \ Nrd(y)  = \frac{m}{\Delta N(\lie{a})}  \right\}, \]
which admits an action of $o_k^{\times} \times \OBxone$  via the formula
\[ (a,b) \cdot y := \phi(a)  \ y \ b^{-1}, \qquad \qquad  a \in o_k^{\times},  \ b \in \OBxone. \]
Then
\[ \Zed^+(m, \phi)(\C^{\sigma_0}) \simeq \left[ o_k^{\times} \times \OBxone \Big\backslash \coprod_{[\lie{a}] \in Cl(k)} \coprod_{y \in \Omega^+(m, \lie{a}, \phi) } \D_{y} \right], \]
where $\D_{y} := \left\{ (y)^{\perp}  \right\} \subset \D(B^{\phi}_{\R})^-, $
and the ideals $\lie{a}$ range over any set of representatives for the class group.
Similarly, if 
\[ \Omega^-(m, \lie{a}, \phi):= \left\{ y \in \phi(\lie{a})^{-1} \OB \ \Big{|}  \ Nrd(y)  = \frac{-m}{\Delta N(\lie{a})}  \right\}, \]
then
\[ \Zed^-(m, \phi)(\C^{\sigma_0}) \simeq \left[ o_k^{\times} \times \OBxone \big\backslash \coprod_{[\lie{a}] \in Cl(k)} \coprod_{y \in \Omega^-(m, \lie{a}, \phi) } \D_{y}' \right] ,\]
where $\D_{y}' := \left\{ \text{span}_{k_{\R}} ( y) \right\} $.

\begin{proof}
We begin by describing the uniformization of $\Zed^+(m, \phi)$. 
Fix a set of representatives $\{ \lie{a}_1, \dots, \lie{a}_h \}$ for the class group $Cl(k)$ of $k$. For each $\lie{a}_i$, we have a complex elliptic curve 
\[ E_i = \C/ \sigma_0(\lie{a}_i). \]
There is also a natural $o_k$-action of signature $(1,0)$, given by the restriction of the embedding $\sigma_0 \colon k \to \C$ to $o_k$, and a compatible principal polarization $\lambda_{E_i}$ determined by the alternating Riemann form
\begin{equation} \label{EpolRFEqn}
 \{x,y\}_{E_i} \mapsto \frac{1}{2N(\lie{a}_i) \Delta} tr_{\C / \R}( x \ y' \ \sigma_0(\sqrt{\Delta})), \qquad x, y \in H_1(E_i, \Z) = \lie{a}_i.
\end{equation}
In this way, the triple $\underline E_i := (E_i, \sigma_0|_{o_k}, \lambda_{E_i})$ defines a point of $\mathscr E^+(\C^{\sigma_0})$, with automorphism group $|o_k^{\times}|$, and every point of $\mathscr E^+(\C^{\sigma_0})$ is isomorphic to such a triple. 

Now suppose $(\underline E, \underline A, \lie{y}) \in \Zed^+(m, \phi)(\C^{\sigma_0})$ is a complex-valued point of a unitary special cycle. We may assume that $\underline A = \underline A_{\zeta}$ corresponds to a line $\zeta \in \D(B^{\phi}_{\R})^-$,  and $\underline E = \underline E_i$ as above. Then $\lie y \colon \underline E_i \to A_{\zeta}$ is determined by the $o_k$-linear map
\[ \tilde y \colon \lie{a}_i \simeq H_1(E_i, \Z) \to H_1(A_{\zeta}, \Z) \simeq \OB \]
on the level of homology, which in turn is determined by the image of any non-zero element. For concreteness, let
\begin{equation} \label{lieCondEqn}
 y := \frac{1}{N(\lie{a}_i)} \ \tilde y(N(\lie{a}_i)) \in \phi(\lie{a}_i)^{-1} \OB. 
\end{equation}
If $J  \in End(B_{\R})$ denotes the complex structure on $Lie(A_{\zeta}) = B_{\R}$ determined by $\zeta$, then 
\begin{align*}
 J ( \xi) \ &=  \ \tilde y ( \sigma_0 (\mathbb I_k))  &  \text{[by holomorphicity]} &  \\
 &=  \ \phi(\mathbb I_k) y  & \text{[by } o_k \text{-linearity]} ; &
\end{align*}
in turn, these relations hold if and only if $\zeta  = ( y)^{\perp}$. 

Furthermore, by the description of the polarizations $\lambda_{E_i}$ and $\lambda_{A_z, \phi}$ as in \eqref{EpolRFEqn} and \eqref{RiemFormEqn}, we find
\begin{align*}
 h_{E_i, A_{\zeta}}^{\phi}( \lie y, \lie y) \ &= \  (\lambda_{E_i})^{-1} \circ  \lie y^{\vee} \circ \lambda_{A_z, \phi} \circ \lie y \\
\ &= \Delta N(\lie{a}_i)  Nrd(y),
\end{align*}
and so 
\[  h_{E_i, A_{\zeta}}^{\phi}( \lie y, \lie y) = m \implies y \in \Omega^+(m, \lie{a}, \phi). \]

To summarize, we have described a construction 
\[ \begin{pmatrix} \lie y \in Hom_{o_k}^{\phi}(E_i, A_{\zeta}) \\ {\rm with } \  h^{\phi}_{E_i, A_{\zeta}}( \lie y, \lie y) = m \end{pmatrix} \ 
\rightsquigarrow 
\begin{pmatrix} y \in \Omega^+(m, \lie{a}_i, \phi) \\ {\rm with } \  \zeta = y^{\perp} \end{pmatrix}.
\]

Conversely, an element $y \in \Omega^+(m, \lie{a}_i, \phi)$ defines a morphism $\lie{y} \colon E_i \to A_{\zeta}$ for $\zeta = y^{\perp}$, by demanding that the corresponding map on Lie algebras is determined by $y$ as in \eqref{lieCondEqn}. Finally, one sees that the action of $o_k^{\times} \times \OBxone$ on $\mathcal E^+(\C) \times \CB(\C)$
induces the action on $\Omega^+(m, \lie{a}_i, \phi)$ described in the proposition. 
The corresponding statement for $\Zed^-(m, \phi)$ follows by a similar argument.
\end{proof}
\end{proposition}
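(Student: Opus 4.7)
The plan is to work through the moduli data defining a complex point of $\Zed^{\pm}(m,\phi)$ and systematically translate it into the lattice-theoretic data appearing on the right-hand side. The whole argument is formal once one identifies (i) a convenient classification of the objects of $\mathcal{E}^\pm(\C^{\sigma_0})$, (ii) the passage from a morphism of complex abelian varieties to its action on homology, and (iii) the explicit form of the complex structure on $Lie(A_\zeta)$ coming from a line $\zeta \in \D(B^\phi_\R)^-$.

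First I would classify $\mathcal{E}^+(\C^{\sigma_0})$. Picking representatives $\lie{a}_1,\dots,\lie{a}_h$ of $Cl(k)$, the elliptic curve $E_i = \C/\sigma_0(\lie{a}_i)$ with $o_k$-action coming from $\sigma_0|_{o_k}$ carries a canonical principal polarization determined by the Riemann form \eqref{EpolRFEqn} (whose normalization is fixed by the compatibility condition with the Rosati involution). A standard Appell–Humbert argument shows every object of $\mathcal{E}^+(\C^{\sigma_0})$ is isomorphic to exactly one $\underline E_i$, with $\Aut(\underline E_i) = o_k^\times$.

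Next, suppose $(\underline E, \underline A, \lie y) \in \Zed^+(m,\phi)(\C^{\sigma_0})$. After choosing an isomorphism $\underline A \simeq \underline A_\zeta$ for some $\zeta \in \D(B^\phi_\R)^-$ and $\underline E \simeq \underline E_i$, the homomorphism $\lie y$ is determined by the induced $o_k$-linear map $\tilde y \colon \lie{a}_i \to \OB$, and $o_k$-linearity together with the fact that $\lie{a}_i$ is a rank-one $o_k$-module shows $\tilde y$ is in turn determined by a single element, which is natural to normalize as $y := \tilde y(N(\lie{a}_i))/N(\lie{a}_i) \in \phi(\lie{a}_i)^{-1}\OB$. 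The key computations are then:
\begin{compactitem}
\item Holomorphicity of $\lie y$ is the requirement that the complex structure $J$ on $Lie(A_\zeta) = B_\R$ (right-multiplication by the element pinned down by $\zeta$) satisfies $J(y) = \phi(\mathbb{I}_k) y$, which by definition of the uniformization \eqref{altCpxUnifMapEqn} is equivalent to $\zeta = (y)^\perp$ in the hermitian form $(\cdot,\cdot)_\phi$.
\item Using the explicit Riemann forms \eqref{EpolRFEqn} and \eqref{RiemFormEqn}, a direct computation gives $h^\phi_{E_i,A_\zeta}(\lie y, \lie y) = \Delta\,N(\lie{a}_i)\,Nrd(y)$, so the condition $h^\phi(\lie y, \lie y) = m$ becomes exactly $y \in \Omega^+(m,\lie{a}_i,\phi)$.
\end{compactitem}
This construction is reversible: an $y \in \Omega^+(m,\lie{a}_i,\phi)$ determines $\zeta = y^\perp \in \D(B^\phi_\R)^-$ and a homomorphism $\lie y \colon E_i \to A_\zeta$ by demanding that its derivative on Lie algebras agrees with $y$ via \eqref{lieCondEqn}; the hermitian computation above recovers $q^\phi(y) = m$. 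Finally, the change-of-basis action of $\Aut(\underline E_i) \times \Aut(\underline A_\zeta) = o_k^\times \times \OBxone$ (where the latter acts on $\D(B^\phi_\R)^-$ by right multiplication) translates into $(a,b)\cdot y = \phi(a) y b^{-1}$, giving the claimed groupoid quotient.

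The $\Zed^-(m,\phi)$ case runs in parallel, with two sign changes to track: objects of $\mathcal{E}^-(\C^{\sigma_0})$ are built from $\C/\sigma_1(\lie{a}_i)$, so the $o_k$-linearity constraint on $\lie y$ now forces $J(y) = -\phi(\mathbb{I}_k)y$, i.e.\ $\zeta$ lies in the $k_\R$-line \emph{spanned} by $y$ rather than its orthogonal complement; and the sign in the hermitian form flips, producing the condition $Nrd(y) = -m/(\Delta N(\lie{a}))$ that defines $\Omega^-$. I expect the main obstacle to be the bookkeeping around this signature: one has to be careful that the embedding $\sigma_0$, the choice of $\mathbb{I}_k$, the orientation convention identifying $\lie{H}^+$ with $\D(B^\phi_\R)^-$ in \eqref{altCpxUnifMapEqn}, and the normalization of the polarization $\lambda_{A,\phi}$ in \eqref{newPolEqnDef} all combine consistently so that negative-definite lines correspond to holomorphic (rather than anti-holomorphic) maps in both the $\mathcal E^+$ and $\mathcal E^-$ cases. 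Once those signs are aligned, the rest is a direct translation between moduli and lattice data.
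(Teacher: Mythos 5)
Your proposal follows essentially the same route as the paper: classify $\mathcal E^{\pm}(\C^{\sigma_0})$ by the curves $\C/\sigma_0(\lie{a}_i)$, encode $\lie y$ by the normalized homology element $y = \tilde y(N(\lie{a}_i))/N(\lie{a}_i) \in \phi(\lie{a}_i)^{-1}\OB$, translate holomorphicity into $\zeta = y^{\perp}$ (resp.\ $\zeta = \mathrm{span}_{k_\R}(y)$ in the $\mathcal E^-$ case) via the uniformization \eqref{altCpxUnifMapEqn}, compute $h^{\phi}_{E_i,A_\zeta}(\lie y,\lie y) = \Delta N(\lie{a}_i)\,Nrd(y)$ from the Riemann forms \eqref{EpolRFEqn} and \eqref{RiemFormEqn}, and match the $o_k^{\times}\times\OBxone$ actions. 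This is exactly the paper's argument, and your sign-tracking remarks for the $\Zed^-$ case are consistent with its conventions.
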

%
%\begin{remark} \label{uniCpxUnifRmk} We can   re-express the preceding theorem in terms of the original uniformization $\CB(\C) \simeq [ \OBxone \backslash \lie{H}]$ as follows: first, note that for any $a, b \in B_{\R}$, we have 
%\[ (a, b)_{\phi} = (a b^{\iota}, 1)_{\phi}. \]
%Given a vector $y \in \Omega^+(m, \lie{a}, \phi)$, let $z_{y} \in \lie{H}$ denote the point which corresponds to $D_{y} := y^{\perp} \in \D(B^{\phi}_{\R})^-$ via the map \eqref{altCpxUnifMapEqn}, so that
%\[ \left\{ v \in B_{\R} \ | \ v \cdot J_{z_{y}} = - \phi(\mathbb I_k) \cdot v \right\}  = \eta^{\perp} . \]
%On the other hand, we have 
%\[ v \in y^{\perp} \ \iff \ (v y^{\iota}, 1 )_{\phi} = 0 \ \iff  v \ y^{\iota} \ \phi(\mathbb I_k) = - \phi(\mathbb I_k)\  v \ y^{\iota};  \]
%combining the two preceding displays, we see that $z_{y}$ is the (unique) point in $\lie{H}$ such that
%\[ J_{z_{y}} = y^{-1} \cdot \phi(\mathbb I_k) \cdot y. \]
%Hence, we may write
%\[ \Zed^+(m, \phi)(\C^{\sigma_0}) \ \simeq \ \left[ o_k^{\times} \times \OBxone \Big\backslash \coprod_{[\lie{a}]} \coprod_{y \in \Omega^+(m, \lie{a}, \phi) } \{ z_{y} \} \right]. \]
%Similarly, for $y' \in \Omega^-(m, \lie{a}, \phi)$, we let $z_{y'}$ denote the unique point in $\lie{H}$ with
%\[ J_{z_{y'}} = - y^{-1} \cdot \phi(\mathbb I_k) \cdot y. \]
%We then have
%\[ \Zed^-(m, \phi)(\C^{\sigma_0}) \ \simeq \ \left[ o_k^{\times} \times \OBxone \Big\backslash \coprod_{[\lie{a}]} \coprod_{y' \in \Omega^-(m, \lie{a}, \phi) } \{ z_{y'} \} \right]. \qquad \diamond \]
%\end{remark}

These uniformizations will allow us to furnish the cycles $\Zed^{\pm}(m,\phi)$ with Green functions, inspired by those appearing in \cite[\S 3.5]{KRYbook}. 
Given  $b \in B$ and a line $\zeta \in \D(B^{\phi}_{\R})^-$, we let $pr_{\zeta}(b)$ denote the orthogonal projection of $b$ onto $\zeta$, and  set
\[R_{\phi}(b, \zeta) = - 2 \left( pr_{\zeta}(b), \  pr_{\zeta}(b) \right)_{\phi} \geq 0.   \]
Note that $R_{\phi}(b, \zeta) = 0$ if and only if  $\zeta = b^{\perp}$. In addition, for any $\gamma \in GU(B^{\phi}_{\R})$, we have
\[ R_{\phi}(b, \gamma(\zeta)  ) = R_{\phi}(\gamma^* (b), \zeta). \]
Consider the exponential integral
\[ \beta_1(u) := \int_{1}^{\infty} e^{-ut} t^{-1} \mathop{dt}, \qquad u \in \R_{>0}, \]
and set
\[ Gr^+_{\phi}(b, \zeta) := \beta_1( 2 \pi R_{\phi}(b, \zeta)). \]

\begin{proposition} \label{ddcUnitary} Suppose $(b, b)_{\phi}>0$. As currents on $\D(B^{\phi}_{\R})^-$, we have
\[ d d^c [ Gr_{\phi}^+(b, \zeta) ]   + \delta_{\D_{b}} = [  \varphi_{\phi}^+ \cdot  c_1(\widehat{\omega}^o)], \]
where 
\[ \varphi_{\phi}^+( b, \zeta) := \frac{1}{2} \big( 2 \pi ( R_{\phi}(b, \zeta) + 2 \Delta Nrd(b)) -1 \big) \exp \big( - 2 \pi R_{\phi}(b, \zeta) \big), \]
 $c_1(\widehat \omega^o)$ is the first Chern form of the Hodge class, and $ \D_{b} = \left\{ b^{\perp} \right\}$ is a singleton set in $\D( B^{\phi}_{\R})^-$.  
\begin{proof} 
We fix a basis $\{ e, f \}$ of $B^{\phi}_{\R}$ as a $k_{\R}$-vector space such that $(e,e)_{\phi} = - (f,f)_{\phi} = 1$, and $(e,f)_{\phi} = 0$, which yields an identification $ U^1 \simeq \D(B^{\phi}_{\R})^- $ as in \refLemma{hodgeUnifLemma}.

Write $b = \xi_1 e + \xi_2 f$, with $\xi_1, \xi_2 \in k_{\R}$. Then if $\lie{Z} = \lie{Z}(\zeta) \in U^1$ corresponds to $\zeta$, we have
\[ R_{\phi}(b, \lie{Z}) := R_{\phi}(b, \zeta) = - 2(pr_{\zeta}(b) , \  pr_{\zeta}(b))_{\phi} =2 \frac{|\lie{Z} \xi_1 - \xi_2|^2}{1 - |\lie{Z}|^2}. \]
Thus away from $\D_{\xi}$, we have
\begin{align*} d d^c \ Gr_{\phi}^+(b, \lie{Z}) \ =&  \ \frac{i}{ 2 \pi}  \ \partial \bar{\partial}  \ Gr_{\phi}^+(b, \lie{Z})  \\
 =& \  \ i \ \frac{e^{-2 \pi R}}{2 \pi R} \left[ \ - \frac{\partial^2 R}{\partial \lie{Z} \partial \bar{\lie{Z}} } + \left( 2 \pi + \frac{1}{R} \right) \frac{\partial R}{\partial \lie{Z}} \frac{\partial R }{\partial \bar{\lie{Z}}}  \right] \ d\lie{Z} \wedge d \bar{\lie{Z}} ,
  \end{align*}
where $R = R_{\phi}(b, \lie{Z})$.
Evaluating the derivatives in the preceding display, we see that away from $\D_{\xi}$,
\begin{equation} \label{ddcGrPhiEqn}
 d d^c Gr_{\phi}^+(b, \lie{Z}) = i \left( R_{\phi}(b, \lie{Z}) + 2\Delta Nrd(b) - \frac{1}{2 \pi} \right) e^{-2 \pi R} \frac{ d\lie{Z}\wedge d \bar{\lie{Z}}}{ (1- |\lie{Z}|^2)^2}. 
\end{equation}
Now to investigate the behaviour of $Gr_{\phi}^+(\xi, \lie{Z})$ near $\D_{\xi}$, we use the expression
\[ \beta_1(t) = - \gamma - \log t - \int_0^{-t} \frac{e^u-1}{u} du, \]
where $\gamma$ is the Euler-Mascheroni constant. Also note that the point in $w_0 \in U^1$ corresponding to the line $ \xi^{\perp} \in \D_{\xi}$ is given by $\lie{Z}_0 = \xi_2 / \xi_1$, and hence
\[ |\lie{Z} \xi_1 - \xi_2 |^2 = |\xi_1|^2 \ |\lie{Z} - \lie{Z}_0|^2. \]
In particular, we may write
\[ Gr_{\phi}^+(b,\lie{Z}) = -\log |\lie{Z} - \lie{Z}_0|^2 + f(\lie{Z}) \] where $f(\lie{Z})$ is a $C^{\infty}$ function. 

Let $N_{\epsilon}$ denote a small ball of radius $\epsilon$ around $w_0$, and $g$ a compactly supported $C^{\infty}$ function on $U^1$. Then, abbreviating $Gr^+ = Gr_{\phi}^+(b, \lie{Z})$, we have
\begin{align*} 
\int_{U^1} Gr^+ \cdot d d^c g &= \lim_{\epsilon \to 0} \int_{U^1 - N_{\epsilon}} Gr^+ \cdot d d^c g \\
&= \lim_{\epsilon \to 0} \int_{U^1 - N_{\epsilon}} g \cdot  d d^c Gr^+ \ - \int_{\partial N_{\epsilon}} \left( Gr^+ \cdot d^c g - g \cdot d^c Gr^+ \right).
\end{align*}

Consider the limit $\epsilon \to 0$. Combining \eqref{ddcGrPhiEqn} and \eqref{chern1Eqn}, the first integral approaches $[ \psi^+ \cdot   c_1(\widehat{\omega})](g)$, while using the ``Integral Table'' \cite[p.23]{Lang}, the second integral approaches $- g(w_0)$. 
\end{proof}
\end{proposition}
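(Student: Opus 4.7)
The plan is to work in the explicit coordinate chart provided by \refLemma{hodgeUnifLemma}, which identifies $\D(B^{\phi}_{\R})^-$ with the unit disc $U^1 \subset \C$, and verify the identity pointwise away from the singularity and then as a current at the singularity.

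First, I would fix a basis $\{e,f\}$ as in \refLemma{hodgeUnifLemma} and write $b = \xi_1 e + \xi_2 f$ with $\xi_1, \xi_2 \in k_{\R}$. For $\lie{Z} \in U^1$ parametrizing the line spanned by $\lie{Z}e + f$, an orthogonal projection computation yields the closed form
\[ R_{\phi}(b, \lie{Z}) \ = \ 2 \, \frac{|\lie{Z} \xi_1 - \xi_2|^2}{1 - |\lie{Z}|^2}. \]
The hypothesis $(b,b)_{\phi} > 0$ translates to $|\xi_1|^2 > |\xi_2|^2$, so $\lie{Z}_0 := \xi_2/\xi_1$ lies in $U^1$ and is the unique zero of $R_\phi(b,\cdot)$; thus $\D_b = \{\lie{Z}_0\}$.

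Second, away from $\lie{Z}_0$, I would compute $dd^c Gr^+_{\phi}(b,\lie{Z})$ directly by the chain rule, using $\beta_1'(u) = -e^{-u}/u$. The result is a combination of $\partial^2 R/\partial \lie{Z} \partial \bar{\lie{Z}}$ and $|\partial R / \partial \lie{Z}|^2$ terms weighted by rational functions of $R$ times $e^{-2\pi R}$. Evaluating these derivatives from the explicit rational expression for $R$ and comparing to the Chern form $c_1(\widehat{\omega}^o) = (i/\pi)\, d\lie{Z} \wedge d\bar{\lie{Z}}/(1-|\lie{Z}|^2)^2$ of \refLemma{hodgeUnifLemma}, I would verify the smooth identity
\[ dd^c Gr^+_\phi(b,\lie{Z}) \ = \ [\varphi^+_\phi(b,\lie{Z}) \cdot c_1(\widehat{\omega}^o)] \qquad \text{on } U^1 \setminus \{\lie{Z}_0\}, \]
using the relation $Q(b) = (b,b)_\phi = \Delta \, Nrd(b)$ to produce the combination $R_\phi(b,\zeta) + 2 \Delta\, Nrd(b)$ inside $\varphi_\phi^+$.

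Third, for the current statement at $\lie{Z}_0$, I would use the expansion $\beta_1(t) = -\gamma - \log t + O(t)$ at $t = 0$ together with the factorization $|\lie{Z}\xi_1 - \xi_2|^2 = |\xi_1|^2 |\lie{Z}-\lie{Z}_0|^2$ to write
\[ Gr^+_\phi(b, \lie{Z}) \ = \ -\log|\lie{Z} - \lie{Z}_0|^2 \ + \ h(\lie{Z}) \]
with $h$ smooth in a neighborhood of $\lie{Z}_0$. Pairing $dd^c[Gr^+_\phi]$ against a compactly supported test function $g$ and applying Stokes's theorem on the complement of a shrinking disc $N_\epsilon$ around $\lie{Z}_0$, the interior integral converges to the claimed smooth pairing, while the boundary terms evaluate, using the standard "Integral Table" of \cite[p.\ 23]{Lang}, to $-g(\lie{Z}_0)$; this produces exactly the delta contribution $\delta_{\D_b}$.

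The main obstacle is the algebraic bookkeeping in the second step: the $-1$ and the constant shift by $2 \Delta\, Nrd(b)$ inside $\varphi_\phi^+$ emerge only after a careful cancellation between the two derivative contributions, and require the substitution $2 \Delta Nrd(b) = |\xi_1|^2 - |\xi_2|^2$ (up to the normalization of the basis) together with the identity $\partial^2 R / \partial \lie{Z} \partial \bar{\lie{Z}} = |\xi_1|^2/(1-|\lie{Z}|^2)^2 + (\text{lower order})$. Once this identification of smooth densities is in place, the remaining current computation is standard.
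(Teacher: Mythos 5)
Your proposal follows essentially the same route as the paper's proof: the same disc coordinate from \refLemma{hodgeUnifLemma}, the same closed form $R_{\phi}(b,\lie{Z}) = 2|\lie{Z}\xi_1-\xi_2|^2/(1-|\lie{Z}|^2)$, the same smooth $dd^c$ computation compared against \eqref{chern1Eqn}, and the same excision/Stokes argument with the $\beta_1$ expansion and Lang's integral table producing the $-g(\lie{Z}_0)$ delta contribution, so it is correct (and your observation that $(b,b)_{\phi}>0$ forces $\lie{Z}_0\in U^1$ is a detail the paper leaves implicit). The only quibble is the normalization in your final remark: with this basis one has $\Delta\,Nrd(b)=(b,b)_{\phi}=|\xi_1|^2-|\xi_2|^2$, not $2\Delta\,Nrd(b)$, but this does not affect the structure of the argument.
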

 
Note that for $\zeta \in \D(B^{\phi}_{\R})^-$, and a \emph{negative} vector $b \in B^{\phi}_{\R}$, we have $b \in \zeta$ if and only if $R_{\phi}(b, \zeta) +  2\Delta Nrd(b) = 0$. With this in mind, we set 
\[ Gr^-_{\phi}(b, \zeta) \ := \ \beta_1 \Big ( 2 \pi \ ( R_{\phi}(b, \zeta) + 2\Delta Nrd(b)) \Big) .\]
Then, by a similar calculation to the previous proposition, we have
\[ dd^c [Gr_{\phi}^-(\xi)] + \delta_{\D'_{\xi}} = [\varphi^- \cdot c_1(\widehat{\omega})], \] 
where
\[ \varphi_{\phi}^-( b, \zeta) = \frac{1}{2} \left( 2 \pi \ R_{\phi}(b, \zeta) -1 \right) \exp \big( - 2 \pi \ ( R_{\phi}(b, \zeta) + 2 \Delta Nrd(b)) \big).  \]

We want to use these calculations to describe Green functions for the unitary special cycles, and thereby define classes in the first arithmetic Chow group of $\CB$. One technical hiccup, however, is that the cycles are defined over $o_k$ and not $\Z$. The following lemma assures us that the machinery of arithmetic Chow groups continues to operate after base change to $o_k$.  
\begin{lemma} The base change $ \CBok := \CBZ \times_{\Z} \Spec( o_k)$ is an arithmetic surface over $\Spec(o_k)$ in the sense of \refSec{chowSec}: it is regular of dimension $2$, and proper and flat over $\Spec(o_k)$. Moreover, $\CBok$ is smooth over $\Spec(o_k[D_B^{-1}])$. 
\begin{proof}
The only non-obvious point is regularity. Note that $\Spec(o_k)$ is smooth over $\Spec \Z [|\Delta|^{-1}]$, and recall that $\CBZ$ is smooth over $\Spec \Z[D_B^{-1}]$. Since $(D_B, |\Delta|) = 1$,  every point $x \in \Spec(\Z)$ has an open neighbourhood $U$ such that at least one of $\CBZ \times U$ or $\Spec(o_k) \times U$ is smooth over $U$. Since regularity is a local property and smooth base changes preserve regularity, it follows that $\CBok$ is regular.
\end{proof}

\end{lemma}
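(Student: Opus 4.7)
The plan is to check each property required for $\CBok$ to be an arithmetic surface by reducing to the corresponding property of $\CBZ$ and behavior under base change along $\Spec(o_k) \to \Spec(\Z)$. Properness, flatness, and the DM stack property are all stable under arbitrary base change, so they transfer from $\CBZ$ to $\CBok$ immediately. The relative dimension over the base is preserved by base change, so $\CBok \to \Spec(o_k)$ is relatively 1-dimensional, giving total dimension 2. Geometric irreducibility can be checked on the generic fibre: $\CBZ \otimes \Q$ is geometrically irreducible, and base change from $\Q$ to $k$ preserves this. Finally, smoothness over $\Spec(o_k[D_B^{-1}])$ is obtained by base changing the smoothness of $\CBZ$ over $\Spec(\Z[D_B^{-1}])$.

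The only delicate point is regularity of the total space. The subtlety is that $\CBZ$ itself is only smooth over $\Spec(\Z[D_B^{-1}])$, and $\Spec(o_k) \to \Spec(\Z)$ is only smooth away from the ramified primes of $k$. Base change of a regular scheme along a non-smooth morphism of regular schemes can fail to be regular, so one needs to check the two "bad" loci do not overlap.

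The key input here is the arithmetic compatibility of our hypotheses: every prime dividing $D_B$ is assumed inert in $k$, hence unramified, hence coprime to $|\Delta|$; thus $(D_B, |\Delta|) = 1$. This means: over any prime of $\Z$ dividing $D_B$ the map $\Spec(o_k) \to \Spec(\Z)$ is smooth, and over any prime dividing $|\Delta|$ the stack $\CBZ$ is smooth over $\Spec(\Z)$. Since regularity is Zariski-local on the target, and smooth base change (or smoothness of one of the two factors) preserves regularity, in a neighbourhood of any point of $\Spec(\Z)$ we can arrange the base change to be a smooth morphism of one of the two factors over a regular base, and conclude that $\CBok$ is regular at every point.

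The main (essentially only) obstacle is isolating this numerical coprimality and verifying that at each geometric point of $\CBok$ we really can find a Zariski neighbourhood on which one of the two smoothness statements applies; everything else is a direct invocation of standard permanence properties of morphisms of stacks under base change.
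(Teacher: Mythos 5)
Your proposal is correct and follows essentially the same route as the paper: the paper likewise treats regularity as the only non-obvious point, and it uses exactly the coprimality $(D_B,|\Delta|)=1$ (forced by the inertness hypothesis) to cover $\Spec(\Z)$ by opens over which either $\CBZ$ or $\Spec(o_k)$ is smooth, concluding by locality of regularity and preservation of regularity under smooth base change. No substantive differences to report.
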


For every $m > 0$ and $\eta \in \R_{>0}$, we define an arithmetic class
\begin{equation} \label{unDivClassPosEqn}
 \widehat\Zed(m, \phi, \eta) \ := \ \big( \Zed(m, \phi), \ Gr^u(m, \phi, \eta) \big) \ \in \ \CHR(\CBok),
\end{equation}
where the Green functions 
\[ Gr^u(m, \phi, \eta)  = (Gr^u(m, \phi, \eta)^{\sigma_0}, Gr^u(m, \phi, \eta)^{\sigma_1})\] 
are given as follows: set
\begin{align}  Gr^u (m, \phi, \eta)^{\sigma_0}(\zeta) \ := \ \frac{1}{|o_k^{\times}|}  \sum_{[\lie{a}] \in Cl(k)} 
 \sum_{y \in \Omega^{+}(m, \lie{a}, \phi)} & Gr_{\phi}^{+} \left( (N(\lie{a})\eta)^{1/2} y, \zeta \right)  \notag  \\
 &+  \sum_{y \in \Omega^{-}(m, \lie{a}, \phi)}  Gr_{\phi}^{-} \left( (N(\lie{a})\eta)^{1/2} y, \zeta \right)  \label{grnFnEqnDef}
 \end{align}
and, if  $\iota \colon \CBok(\C^{\sigma_1}) \to \CBok(\C^{\sigma_0})$ is the map induced by complex conjugation, we also set
\[ Gr^u (m, \phi, \eta)^{\sigma_1} \ :=  \iota^* Gr^u (m, \phi, \eta)^{\sigma_0}.  \]

Note that when $m<0$, the same formula \eqref{grnFnEqnDef} defines a \emph{smooth function} on $\CB(\C^{\sigma_0})$, and so in this case we may also define classes
\begin{equation} \label{unDivClassNegEqn}  \widehat\Zed(m, \phi, \eta) \ := \ \big( 0, \ Gr^u(m, \phi, \eta) \big) \ \in \ \CHR(\CBok). 
\end{equation}

\begin{remark} The normalizing factor $(N(\lie{a}))^{1/2}$ appears in the arguments of the functions $Gr^{\pm}$ in \eqref{grnFnEqnDef} because we choose to work with vectors in $B$, rather than the Hermitian space $Hom(\lie{a},B)$.
\end{remark}

%*************************************** Shimura lift 
\section{The Shimura lift, and statement of the main theorem} 

\subsection{The Shimura lift via theta series} \label{shLiftSec}
 {\ \\}
 
In this section, we rapidly recall the construction of the Shimura lift \cite{Shim}, which is an operation taking modular forms of half-integral weight to modular forms of even integral weight. Rather than using Shimura's original approach, however, we shall consider the reformulation due to Shintani \cite{Shin} and Niwa \cite{Niwa}, who show that one can recover the Shimura lift via integration against a specific theta kernel; this approach has the advantage of being readily generalized to forms that are non-holomorphic. 

We begin by recalling the construction of the Niwa-Shintani theta kernel. Fix an integer $N>0$, a squarefree positive integer $t$, and consider the quadratic form on $\R^3$ determined by the matrix
\[ Q = \frac{2}{Nt} \begin{pmatrix} & & -2 \\ & 1 & \\ -2 & & \end{pmatrix}. \] 
Let $L = \Z \oplus Nt \Z \oplus (Nt/4)\Z$, which is an even self-dual lattice for the form $Q$.

There is an action of $SL_2(\R)$ on $\R^3$, given as follows: for $g \in SL_2(\R)$, put $g \cdot (x_1, x_2, x_3) = (x_1', x_2', x_3')$, where
\[ \begin{pmatrix} x_1' & x_2'/2 \\ x_2'/2 & x_3' \end{pmatrix} = g \begin{pmatrix} x_1 & x_2/2 \\ x_2/2 & x_3 \end{pmatrix} g^{t}. \]
This action induces an isomorphism $SL_2(\R) / \left\{ \pm Id \right\} \simeq SO(Q)$. 

Finally, for any positive integer $\lambda$, define the Schwarz function $f_{\lambda} \in \mathcal S(\R^3)$ to be
\[ f_{\lambda}(x_1, x_2, x_3) := (x_1 - ix_2 - x_3)^{\lambda} \exp \left( - \frac{2 \pi}{Nt} ( 2x_1^2 + x_2^2 + 2x_3^2) \right). \]

\begin{definition}[Niwa-Shintani theta kernel]
Suppose $N$, $t$, and $\lambda$ are positive integers, and $t$ is squarefree. Let $\chi$ be a Dirichlet character modulo $N$, and set $\kappa = 2 \lambda + 1$. For $\tau = u+iv, w = \xi + i \eta \in \lie{H}$, define the \emph{Niwa-Shintani theta kernel} of parameter $t$ to be
\[ \Theta_{NS}(\tau, w) \ := \ (4\eta)^{-\lambda} \  v^{-\kappa / 4} \ \sum_{x \in L} \  \overline{ \chi_t}(x_1) \ \left\{ \omega( \sigma_{\tau}) f_{\lambda} \right\} (\sigma_{4w}^{-1} \cdot x),\]
where 
\begin{itemize}
\item $ \chi_t(a) := \chi(a) (-1/ a)^{\lambda} (t / a)$, where $({\cdot}/{\cdot})$ is the (modified) Kronecker symbol, cf.\ Appendix A of \cite{Cipra};
\item $\omega$ is the Weil representation for the dual pair $(SL_2, O(Q))$;
\item $\sigma_{\tau} = \begin{pmatrix} v^{1/2} & u v^{-1/2} \\ & v^{-1/2} \end{pmatrix}$;
\item and $\sigma_{4w} = \begin{pmatrix} 2 \eta^{1/2} & 2 \xi \eta^{-1/2} \\ & \eta^{-1/2} / 2 \end{pmatrix}$. 
\end{itemize}
\end{definition}

It follows from the properties of the Weil representation that as a function of $\tau$, we have $\Theta_{NS}$ is a modular form of weight $\kappa/2$, for the congruence group $\Gamma_0(4Nt)$, with character $\overline {\chi} (Nt / \cdot)$. 

As a function of $w$, the conjugate of the theta kernel $\overline{\Theta_{NS} }$ is a modular form of weight $2 \lambda = \kappa -1$, with level $2Nt$ and character $\overline{\chi}^2$.

To recover Shimura's lift, it turns out that one needs to apply a Fricke involution (normalized to take into account the half-integral weight) to both variables; following the notation of \cite{Cipra}, define
\begin{align*}  \Theta_{NS}^{\#}(\tau, w) &:= \Theta_{NS}|_{W(4Nt)} \overline{|_{W(2Nt)}}  \\  &=  2^{-2 \lambda -1/2} (Nt)^{-3 \lambda/2 - 1/4} (-i \tau)^{- \kappa/2} (\overline{w})^{-2 \lambda} \ \Theta_{NS} \left( \frac{-1}{4Nt\tau},\frac{-1}{2Ntw} \right). \end{align*}

\begin{definition}[Shimura lift] 
Suppose that $G \in M_{\kappa / 2}(4N,\chi)$, and let $G_t(\tau) := G(t \tau) \in M_{\kappa / 2}(4Nt, \chi(t/\cdot))$. We define the \emph{Shimura} lift of $G$ to be the function
\begin{equation} \label{shLiftFormula} Sh_t(G)(w) := C(\lambda)  \int_{ \Gamma_0(4Nt) \backslash \lie{H}} v^{\kappa / 2} \ G_t(\tau) \ \overline{ \Theta_{NS}^{\#}( \tau, w) } \ d \mu(\tau), \end{equation}
whenever the integral is defined; here $\lambda = (\kappa - 1)/2$ and  $C(\lambda) = (-1)^{\lambda} 2^{3 \lambda - 2} (tN)^{-\lambda /2 - 1/4}$. 
\end{definition}
%
%\begin{theorem}[Shimura, Niwa, Cipra] \label{shLiftThm} Suppose $\lambda \geq 1$, and $\kappa = 2 \lambda + 1$. 
%
%(i) The integral \eqref{shLiftFormula} is absolutely convergent for any modular function $G$ with at worst polynomial growth at the cusps. In this case, $Sh_t(G)(w)$ is a modular function of weight $2 \lambda$ that transforms at level $2Nt$ and character $\chi^2$. 
%
%(ii) Suppose $G \in G_{\kappa / 2}(4N, \chi)$ is a \emph{holomorphic} modular form, with Fourier expansion 
%\[ G(\tau) = \sum_{n \geq 0} a(n) e(n \tau), \qquad e(z) = \exp(2 \pi i z), \]
%and let 
%\[ \chi_t(a) := \chi(a) (-1/ a)^{\lambda} (t / a). \] 
%Define a Fourier series $\tilde{G}(w) = \sum_{m \geq 0} b(m) e(mw)$,
%where the coefficients $b(m)$ are given by
%\begin{equation} \label{holShLiftCoeff} b(m) = \sum_{n | m} \chi_t(n) \   n^{\lambda - 1} \ a \left(t \frac{m^2}{n^2} \right), \qquad \text{for } m>0 ,
%\end{equation}
%and the constant term $b(0)$ is given as follows: let 
%\[ \check{\chi_t}(a) = \sum_{h = 0}^{4Nt - 1} \ \chi_t(h) \exp( 2 \pi i a h /4Nt) \]
%denote the Gauss sum, and set 
%\begin{equation} \label{holShLiftCoeffZero} 
%b(0) = a(0) \ \frac{(-1)^{\lambda}}{4} (2 \pi i)^{\lambda}  (\pi)^{-2 \lambda} (Nt)^{ \lambda - 1} \ \Gamma(\lambda) \ \sum_{m>0} m^{-\lambda} \check{\chi}_t(m) 
%\end{equation}
%
% Then the Shimura lift is a holomorphic modular form, and
%\[ Sh_t(G)(w) = \widetilde{G}(tw), \]
%as Fourier expansions at $\infty$. 
%\begin{proof}
%The first statement is \cite[Proposition 2.8]{Cipra}; for the second, see \cite[Theorem 2.17]{Cipra}.
%\end{proof}
%\end{theorem}

We will also  need  a Poincar\'{e} series expression for the Niwa-Shintani theta kernel. The proof amounts to making the necessary straightforward modifications to the argument  in \cite[\S 3]{Koj}, where an analogous statement for the `untwisted' theta kernel appears; we omit the argument here, and refer the reader to  \cite[Theorem 4.4]{SanThesis} for details. 

\begin{theorem} \label{thetaPoincare} 
For any integer $\mu \geq 0$, $\tau = u+iv \in \lie{H}$, and $\alpha \in \R$, define a theta function
\[ \theta_{\mu}(\tau, \alpha) := (2 \sqrt{2 \pi})^{- \mu} \ v^{-\mu/2} \sum_{\ell \in \Z} H_{\mu} \left(2 \sqrt{2 \pi v}\  \ell \right) e \left( \tau \ell ^2 + 2 \alpha \ell \right) , \]
where $e(z) = e^{2 \pi i z}$ and $H_{\mu}$ is the Hermite polynomial 
\[ H_{\mu}(x) = (-1)^{\mu} \ e^{x^2/2} \ (d/ dx)^{\mu} \left(e^{-x^2/2} \right).\]
%
%For $\mu = 0$ or $1$, and $\alpha \in \R$,  let
%\[ \theta_{\mu}(\tau, \alpha) = \sum_{\ell \in \Z} \ell^{\mu}  e \left( \tau \ell ^2 + 2 \alpha \ell \right).  \]
Then, for $w = \xi + i \eta \in \lie{H}$, 
\begin{align*} \Theta_{NS}^{\#}(\tau, w) =& \ C \ \sum_{\mu = 0}^{\lambda} \ \binom{\lambda}{\mu} 4^{\mu} \ \eta^{1-\mu} \sum_{m \in \Z} \overline{\chi_t(m)} \ m^{\lambda - \mu} \\
\times& \sum_{\gamma \in \Gamma_{\infty} \backslash \Gamma_0(4Nt)} \frac{\Im(\gamma \tau)^{\mu - \lambda} }{\chi_t(\gamma)j_{\kappa/2}(\gamma, \tau)} \exp \left( - \frac{\pi \eta^2 m^2}{4\Im (\gamma \tau)} \right) \theta_{\mu}( \gamma \tau, -\xi m /2),
\end{align*}
where $C =(-1)^{\lambda} 2^{-4 \lambda} (tN)^{\lambda/2 +1/4}$, and $\chi_t(\gamma) = \chi_t(d)$ when $\gamma = ( \begin{smallmatrix} a & b \\ c & d \end{smallmatrix}) \in \Gamma_0(4Nt)$. 

\qed
\end{theorem}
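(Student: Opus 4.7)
The plan is to follow the method of Kojima \cite{Koj}, which treats the untwisted case, and adapt it to incorporate the character $\chi_t$. The starting point is the explicit expression
\[ \Theta_{NS}(\tau, w) \ = \ (4\eta)^{-\lambda} v^{-\kappa/4} \sum_{x \in L} \overline{\chi_t}(x_1) \ \{\omega(\sigma_\tau) f_\lambda\}(\sigma_{4w}^{-1} x). \]
First, I would make the action of $\sigma_\tau$ via the Weil representation fully explicit: since $\sigma_\tau$ is upper triangular, $\omega(\sigma_\tau)$ acts on $f_\lambda$ by a multiplication operator $v^{\kappa/4}$ together with an exponential $e(\tau Q(x))$-type factor; combined with the explicit Gaussian part of $f_\lambda$ and the action of $\sigma_{4w}^{-1}$, one gets a concrete Gaussian--polynomial expression in $x_1, x_2, x_3$ depending on $\tau, w$. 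Next, I would apply the Fricke involutions $W(4Nt)$ in $\tau$ and $W(2Nt)$ in $w$ using the standard formulas for the Weil representation under $S = \bigl( \begin{smallmatrix} & -1 \\ 1 & \end{smallmatrix} \bigr)$, which by Poisson summation on the lattice $L$ (self-dual for $Q$) produces $\Theta_{NS}^{\#}$ as a lattice sum of the same shape but with the roles of the variables suitably dualized and the character $\chi_t$ replaced by its twisted dual.

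Second, I would expand $(x_1 - ix_2 - x_3)^\lambda$ by the binomial theorem into terms indexed by $\mu = 0, \dots, \lambda$, each of which factors as a product of a polynomial in one lattice variable (which will become the $m$ variable, after identifying $m = x_1$ or its Fricke-dual) times a polynomial in the remaining pair. This is precisely the source of the $\sum_{\mu=0}^\lambda \binom{\lambda}{\mu} 4^\mu \eta^{1-\mu}$ in the statement. The sum over $m \in \Z$ with character $\overline{\chi_t}(m) m^{\lambda - \mu}$ corresponds to summing over one coordinate of the dualized lattice; the factor $m^{\lambda - \mu}$ is what survives from the binomial expansion.

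Third, I would perform Poisson summation in the remaining two lattice variables $(x_2, x_3)$ (or in practice, the one responsible for the $\ell$ in $\theta_\mu$ and the one that produces the $\Gamma_\infty \backslash \Gamma_0(4Nt)$ coset sum). The Hermite polynomial $H_\mu$ arises in the standard way: after applying $\omega(\sigma_\tau)$ to a polynomial-times-Gaussian Schwartz function, the polynomial part is an Hermite polynomial because of the raising-operator identity for the Weil representation, cf.\ the computation in \cite[\S 3]{Koj}. This exhibits the factor $\theta_\mu(\gamma \tau, -\xi m/2)$ for each coset representative $\gamma$, the factor $\exp(-\pi \eta^2 m^2 / 4 \Im(\gamma \tau))$ coming from the Gaussian in the dual variable after completing the square, and the automorphy factors $\chi_t(\gamma) j_{\kappa/2}(\gamma, \tau)$ from assembling the sum over the coset representatives into the claimed Poincar\'e-series form. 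The unfolding that introduces the $\Gamma_\infty \backslash \Gamma_0(4Nt)$-sum is standard: one uses the fact that translations by $\Gamma_\infty$ act by shifting $x_1 \mapsto x_1 + \text{integer}$ in $L$, and the full group $\Gamma_0(4Nt)$ acts compatibly via the Weil representation, so the lattice sum naturally reorganizes as a Poincar\'e series.

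The main technical obstacle is keeping track of the twisting: the character $\chi_t$ does not interact trivially with Poisson summation, and one must carefully verify that the correct character $\chi_t(\gamma)$ appears in the denominator of the Poincar\'e sum and that the Kronecker symbol factors $(-1/a)^\lambda (t/a)$ combine correctly with the Fricke-involution automorphy factors. This computation is somewhat involved but essentially elementary Gauss-sum bookkeeping, and it is carried out in detail in \cite[Theorem 4.4]{SanThesis}, to which the reader is referred.
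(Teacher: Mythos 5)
Your proposal follows essentially the same route as the paper, which itself omits the computation and simply refers to Kojima's untwisted argument in \cite[\S 3]{Koj} together with the detailed twisted version in \cite[Theorem 4.4]{SanThesis}. Your outline (explicit Weil-representation action, Fricke involution handled by Poisson summation on the self-dual lattice, binomial expansion of $(x_1 - ix_2 - x_3)^{\lambda}$ producing the $\mu$-sum and the Hermite polynomials, and reorganization of the lattice sum into a $\Gamma_{\infty}\backslash\Gamma_0(4Nt)$ Poincar\'e series with the character bookkeeping deferred to the thesis) matches the structure of that argument, so the approaches coincide.
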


\subsection{The unitary generating series and the main theorem} {\ \\}
%In this subsection, we finally define the unitary generating series $\widehat{\Phi}^u$ and prove our main theorem relating it to the orthogonal generating series $\widehat{\Phi}^o$. 

Recall that for $m \in \Z$ with $m \neq 0$, and any $\eta \in \R_{>0}$ we have constructed classes
\[ \widehat\Zed(m, \phi, \eta) = (\Zed(m, \phi), \ Gr^u(m, \phi, \eta) ) \ \in \ \CHR(\CBok), \]
as in \eqref{unDivClassPosEqn} and \eqref{unDivClassNegEqn} -- by convention, we set $\Zed(m, \phi) = 0$ when $m < 0$. 

We also require the following rescaled variants of these cycles: let
\begin{equation} \label{Z*DefEqn}
 \widehat\Zed{}^*(m, \phi; \eta) = (\Zed^*(m, \phi) , \  Gr^{u*}(m, \phi, \eta)), 
\end{equation}
where
\[ \Zed^*(m, \phi):=   \Zed \left( \frac{ m}{(|m|,D_B)}, \ \phi\right), \ \   \text{and } Gr^{u*}(m, \phi, \eta):= Gr^u \left(  \frac{ m}{(|m|,D_B)}, \ \phi, \  (|m|, D_B)^{1/2} \cdot \eta\right). \]
Note that if $(|m|, D_B) = 1$, then $\widehat\Zed{}^*(m, \phi,\eta)$ and $\widehat\Zed{}(m, \phi, \eta)$ coincide.

It remains to define the ``constant term" $\widehat\Zed(0, \eta)$. It is worth pointing out that there is a unique value for the constant term which makes our \hyperref[mainThm]{main theorem} true, and this is what prompts our definition; unfortunately, I am unaware of an \emph{a priori}  reason for this term to have the precise value that it does.

To start, let $\chi_k$ denote the Dirichlet character associated to $k$,  so that for a prime $p$, we have
\[  \chi_k(p) \ = \  \begin{cases} 1, & \text{if } p \text{ is split,} \\ 0, & \text{if } p \text{ is ramified,} \\ -1, & \text{if } p \text{ is inert.} \end{cases}\]
Consider the character $\chi'$ obtained by induction to level $4D_B |\Delta|$; that is, 
\begin{equation} \label{chi'DefEqn}
 \chi'(a) = \begin{cases} 0 ,  & \text{if } (a, 4D_B|\Delta|) > 1 \\ \chi_k(a), & \text{if } (a, 4D_B|\Delta|) = 1 . \end{cases}
\end{equation}
We also define the Gauss sum
\[ \check{\chi}'(a) := \sum_{h = 0}^{4 D_B |\Delta| - 1}  \chi'(h)  \exp \left( \frac{ 2 \pi i a h }{ 4 D_B |\Delta|} \right)  \]
and its ``L-function" \footnote{Note $\check{\chi}'$ is not a Dirichlet character, so $L(s, \check{\chi}')$ is not an $L$-function in any meaningful sense.}
\[ L(s, \check{\chi}') := \sum_{m > 0} m^{-s} \check{\chi}'(m), \]
which is analytic (as written) in the half-plane $Re(s)>0$. 
%
%We also observe that there is a base change morphism
%\[ \CHR(\CBZ) \ \to \CHR(\CBok), \qquad \widehat Z  \mapsto \widehat{Z}_{/o_k} , \]
%cf.\ \cite[Theorem 3.6.1]{GilletSoule}. More concretely, for $(Z, g^o(Z)) \in  \CHR(\CBZ) $, we have 
%\[ \widehat Z_{/o_k}  \ := \  \left( Z_{/o_k} , (g(Z)^{\sigma_0}, g(Z)^{\sigma_1})  \right), \]
%where the two Green functions are obtained by pulling back $g^o(Z)$ via the natural maps
% \[ \CBok(\C^{\sigma_i}) \isomto  \CBZ(\C). \] 

\begin{definition}  \label{uConstDef}
Let $\widehat\omega := \widehat\omega^o_{/o_k}$ denote the base change of (the class corresponding to) the Hodge class $\widehat{\omega}^o$ to $o_k$, and define the constant term to be
\[ \widehat \Zed(0, \eta) \ := \  \frac{i}{2 \pi} \ L(1, \check{\chi}') \ [  \widehat\omega + (2 \log(\eta) + A)\cdot  \widehat{\mathbf 1}],\]

where the class $\widehat{\mathbf 1} = (0, \mathbf 1)$ corresponds to the constant function 1,  and 
\[ A = \log ( 4 D_B^3 |\Delta|^3 / \pi) - \gamma - 2 \frac{L'(1, \check{\chi}' )}{L(1, \check{\chi}') };\]
here $\gamma$ is the Euler-Mascheroni constant. 
%\footnote{More precisely, we are writing $\log(\eta)$ to denote a \emph{pair}  of constant functions on the two components $\CB(\C^{\sigma_0})$ and $\CB(\C^{\sigma_1})$, each taking the value $\log(\eta)$; the same convention holds here for $(0, A)$. 
%}
 \end{definition}
%While the preceding definition will be particularly useful in the proof of our main theorem, it may be of some interest to note the following  reformulation.
%\begin{lemma}
%(i) We have the following relation between $L$-functions:
%\[ L(s, \check{\chi}') = 2 i \sqrt{|\Delta|} \cdot \prod_{p|D_B} \left( 1 + (p-1)p^{-s} \right) (1+p^{-s}) \cdot L(s, \chi_k) \]
%In particular, $L(1, \check{\chi}') \neq 0$. 
%
%(ii) Let 
%\[ \Lambda_k(s) \ := \  \left( \frac{\pi}{4|\Delta|} \right)^{-(s+1)/2} \ \Gamma\left( \frac{s+1}{2} \right) \ L(s,\chi_k) \]
%denote the completed $L$-function. Then,
%\begin{align*} \widehat\Zed(0, \eta) \ = \ \frac{h(k)}{|o_k^{\times}| } \ \prod_{p|D_B} \left( \frac{2p^2 +p-1}{p^2} \right) \ \left[ - \widehat\omega - 2(0, \log(\eta) ) - (0, \log A) \right]
%\end{align*}
%where the constant $A$ in turn can be re-expressed as 
%\[ A = 2\log|\Delta| + 2 \frac{\Lambda_k'(1) }{\Lambda_k(1)} + \sum_{p | D_B} \log p \cdot \frac{4p^2 - p + 1}{p^2 + p -1}. \] \qed
%%\begin{proof}
%%N
%%\end{proof}
%\end{lemma}

Finally, we arrive at the definition for the unitary generating series:
\begin{definition} \label{uGenSeriesDef}
Let
\begin{align*} \widehat \Phi^u(\tau) := \widehat\Zed(0, v) \ + \ \frac{1}{4 h(k)} \sum_{m \neq 0} \sum_{[\phi] \in Opt / \OBxone} \ \left( \widehat\Zed(m, \phi,v) \ + 
\ \widehat\Zed{}^*\left( m , \phi, v \right)  \right) q^m,
\end{align*}
where $\tau = u + i v \in \lie{H}$, $q = \exp( 2 \pi i \tau)$, and the sum $\sum_{[\phi]}$ is  over any set of representatives of optimal embeddings taken up to $\OBxone$-conjugacy.
\end{definition}
We shall also need to consider the base change
\[  \widehat\Phi_{/o_k}^{o} (\tau) \ := \ \sum_{n \in \Z} \widehat \Zed{}^o(n, v)_{/o_k} \ q_{\tau}^n \ \in \ \CHR(\CBok) \llbracket q_{\tau}^{\pm 1} \rrbracket ,\]
cf.\ \cite[Theorem 3.6.1]{GilletSoule}. Explicitly, the coefficients are given by 
\[ \widehat \Zed{}^o(n, v)_{/o_k}  \ := \  \left( \Zed{}^o(n, v)_{/o_k} , (Gr^o(n,v)^{\sigma_0}, Gr^o(n,v)^{\sigma_1})  \right), \]
where the two Green functions are obtained by pulling back $Gr^o(n,v)$ via the natural map
 \[ \CBok(\C^{\sigma_i}) \coprod \CBok(\C^{\sigma_1}) \ \to \ \CBZ(\C). \] 

\begin{theorem}[Main theorem] \label{mainThm}
Recall our standing assumptions: $\Delta$ is squarefree and even, and that each prime dividing $D_B$ is inert in $k$.
Then we have an equality of $q$-expansions
\begin{equation} \label{mainThmEqn}
 Sh_{|\Delta|}(\widehat\Phi^o_{/o_k}) (w)  \ = \ \widehat\Phi^u(w). 
\end{equation}

\begin{proof}[Outline of proof]
As discussed in the introduction, the proof begins with the decomposition
\[ \CHR(\CBok)= (Vert \oplus \R \widehat\omega ) \ \oplus \ An \ \oplus \ \widetilde{MW}, \]
together with a collection of `good' spanning vectors for the first two summands, as in \refSec{chowSec}; we need to show that for each component of the orthogonal generating series, the Shimura lift of that component is equal to the corresponding component of the unitary generating series. 
%
%\begin{compactenum}[(i)]
%\item vertical components, obtained by pairing against irreducible components $\mathcal Y$ of the fibre of $\CBok$ at primes $p|D_B$;
%\item the `rational degree' generating function, obtained by pairing against $\widehat{\mathbf 1}  = (0,1) \in Vert$;
%\item taking the generic fibres of the both generating series.
%\end{compactenum}  

First, we observe that the $\widetilde{MW}$-component of an arithmetic cycle depends only on its restriction to the generic fibre. Indeed, 
let $\widehat\Zed = (\Zed, g_{\Zed}) \in \CHR(\CB)_{\R}$, and write its decomposition
\[ \widehat \Zed \ = \ \widehat\Zed_{MW} \ + \ (\widehat\Zed_{\omega} + \widehat\Zed_{Vert}) \ + \ \widehat\Zed_{an} \]
with respect to the previous display.
Pairing against $\mathbf 1 = (0,1) \in Vert$ yields 
\[ \deg \Zed_k \ = \ \la \widehat \Zed, \mathbf 1 \ra \ = \ \la \widehat\Zed_{\omega} , \mathbf 1 \ra \ = \  \deg (\Zed_{\omega} )_k,\]
and so 
\[ \widehat\Zed_{\omega} \ = \ \frac{ \deg \Zed_k}{ \deg\omega_k} \ \widehat\omega. \]
Recall that the `restrict-to-the-generic-fibre' morphism $res_{k}$ induces an isomorphism
\[ res_{k} \colon \widetilde{MW} \isomto  Jac(\CB_{,k})(k) \otimes \R, \qquad (\Zed, g_{\Zed}) \mapsto \Zed_{k}\]
where $Jac(\CB_{,k})(k)$ is the group of rational points of the generic fibre of $\CBok$, cf. \refSec{chowSec}.
In particular, for any class $\widehat\Zed = (\Zed, g_{\Zed}) \in \CHR(\CBok)$, we have
\begin{align}
 res_{k} \widehat\Zed_{MW} \ =\  res_{k} \widehat\Zed \ - \  res_{k} \widehat\Zed_{\omega}  = \ \Zed_{k}  \ -  \ \frac{\deg \Zed_k}{\deg\omega_k} \omega_{k} . \label{resQMWEqn}
\end{align}
Thus, if the generic fibres of two arithmetic divisors are equal, then so are their $\widetilde{MW}$-components. In particular, Corollary 4.11 of \cite{San1} implies the desired Shimura lift relation for the $\widetilde{MW}$ components; the same corollary implies that the Shimura lift relation holds for the vertical components as well. 
% 
%When $\widehat{\mathcal Y}$ is a spanning vector coming from an irreducible vertical component of $\CB$, then the desired relation 
%\[ Sh_{|\Delta|} \left( \  \la \widehat\Phi^o(\cdot), \widehat{\mathcal Y}\ra \ \right)(w) \  \stackrel{?}{=}  \ \la \widehat \Phi^u(w), \ \widehat{\mathcal Y} \ra \]
%is proved in \cite[Corollary ??]{San1}. Similarly, we have

It remains to  prove the Shimura lift formula for the analytic components and the Hodge component (obtained by pairing against the Hodge class $\widehat\omega$), which appear in  \refSec{anSec} and \refSec{hodgeSec} respectively.
\end{proof}
\end{theorem}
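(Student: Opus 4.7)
The plan is to follow the orthogonal decomposition strategy sketched by the author and verify the Shimura lift identity componentwise, using
\[ \CHR(\CBok) \ = \ \widetilde{MW} \ \oplus \ (\R \widehat\omega \oplus Vert) \ \oplus \ An. \]
A class in $\CHR(\CBok)$ is determined by its projection onto $\widetilde{MW}$ together with the pairings against the good spanning vectors $\widehat\omega$, $\widehat{\mathbf 1}$, the vertical classes $(\mathcal Y,0)$, and the test classes $\hat{\mathbf f} = (0,f)$. Since both sides of \eqref{mainThmEqn} are $q$-expansions valued in $\CHR(\CBok)$, it suffices to show that each such linear functional applied coefficient-wise commutes with $Sh_{|\Delta|}$. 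The $\widetilde{MW}$ and $Vert$ components are handled already in the prequel: by \eqref{resQMWEqn} the $\widetilde{MW}$-part depends only on the generic fibre, and \cite[Corollary 4.11]{San1} gives the required identity on generic fibres as well as on vertical fibres over finite primes.

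For the analytic component I would fix a test class $\hat{\mathbf f} = (0,(f^{\sigma_0}, f^{\sigma_1}))$ and reduce the problem to the scalar identity
\[ Sh_{|\Delta|}\bigl( \langle \widehat \Phi^o_{/o_k}, \hat{\mathbf f} \rangle \bigr)(w) \ = \ \langle \widehat \Phi^u(w), \hat{\mathbf f}\rangle. \]
By the integral formula for the Gillet--Soul\'e pairing in \refSec{chowSec}, together with \eqref{oSCgrEqn}, the left-hand side assembles into a generating series of archimedean integrals of $Gr^o(n,v)$ against $f^\sigma$ on $\CBok(\C^\sigma)$. Using the explicit formula \eqref{GrODef} for $Gr^o$ as a sum over the lattice $\Omega^o(n)$, one recognizes this generating series as a theta integral of Shintani--Niwa type. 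The plan is then to insert the Poincar\'e series expansion of $\Theta_{NS}^\#$ from \refThm{thetaPoincare}, unfold against $\Gamma_0(4D_B|\Delta|) \backslash \lie{H}$ in the Shimura integral \eqref{shLiftFormula}, and identify the resulting lattice sum with the generating series of integrals of $Gr^u_\phi(b,\zeta) = \beta_1(2\pi R_\phi(b,\zeta))$ against $f^{\sigma_0}$ over $[\OBxone \backslash \D(B^\phi_\R)^-]$, summed over $[\phi] \in Opt/\OBxone$ and $[\lie{a}] \in Cl(k)$ via \refProp{cpxUnifProp}. Carrying this out is the content of \refSec{anSec}.

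For the Hodge component I would pair both sides against $\widehat\omega$. On the orthogonal side, $\langle \widehat \Zed{}^o(n,v), \widehat\omega\rangle$ splits into a finite part (an arithmetic degree of $\Zed^o(n)\cdot \widehat\omega$ on the finite fibres, already handled by \cite{San1}) and an archimedean contribution given by $\int Gr^o(n,v) \cdot c_1(\widehat\omega^o)$; the same structure holds on the unitary side by \refProp{ddcUnitary}. The archimedean contributions are again theta integrals of Shintani--Niwa type, so the same unfolding argument from the analytic case applies. The unusual-looking constant term $\widehat\Zed(0,\eta)$ of \refDef{uConstDef}, with its $L(1,\check\chi')$ and Euler--Mascheroni factors, is then forced by matching the constant coefficient of the Hodge pairing; this calculation occupies \refSec{hodgeSec}.

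The principal obstacle is the analytic component. Two issues must be confronted: first, the arithmetic of characters and Fricke involutions in the Niwa--Shintani construction must be reconciled with the level $4D_B$ and the quadratic form on $B^0$, so that the twist by $\chi_{|\Delta|}$ in $\Theta_{NS}^\#$ corresponds exactly to the restriction of scalars from $B^0$ to $B^\phi$; second, the sum over optimal embeddings $[\phi] \in Opt/\OBxone$ and class group representatives $[\lie{a}]\in Cl(k)$ in $\widehat\Phi^u$ must emerge from the unfolding in a coherent way. Both reflect the seesaw between the dual pairs $(SL_2, O(B^0))$ and $(SL_2, U(B^\phi))$ underlying the Shimura lift in this geometric setting, and the main technical work lies in making the resulting identification of lattice sums completely explicit.
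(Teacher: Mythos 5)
Your proposal follows essentially the same route as the paper: reduce to components via the orthogonal decomposition of $\CHR(\CBok)$, dispose of the $\widetilde{MW}$ and $Vert$ parts by the generic-fibre/vertical cycle identities of \cite[Corollary 4.11]{San1}, and establish the analytic and Hodge components by unfolding the Shimura integral against the Poincar\'e expansion of the Niwa--Shintani kernel and matching the resulting lattice sums (with the constant term of $\widehat\Phi^u$ forced by the Hodge pairing). This is exactly the structure of the paper's argument, carried out in \refSec{anSec} and \refSec{hodgeSec}.
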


%%%%%%%%%%%%%%%%%%%
%\section{Proof of main theorem: Degree and Mordell-Weil components} \label{genFibreSec}
%The proof of the Shimura lift formula for the degree and MW components relies heavily on the complex uniformizations of the unitary and orthogonal special cycles. 
%In this section, whenever we take the complex points of a stack over $o_k$, we shall always to so with respect to the fixed embedding $\sigma_0 \colon o_k \to \C$, and consequently we suppress the embedding from the notation. 
%
%The first step is a finer decomposition of the orthogonal special cycles, which will require some further definitions.

\section{Conductors and Frobenius types of orthogonal special cycles} 
In this brief interlude, we prove a key relation between the two indexing sets $\Omega^o(n)$ and $\Omega^{\pm}(m, \lie{a}, \phi)$ that appear in the complex uniformizations of the orthogonal and unitary special cycles, at least in the special case that the squarefree part of $n$ is equal to $|\Delta|$. 

To start, recall that
\[ \Omega^o(n) := \left\{ \xi \in \OB \ | \ Trd(\xi) = 0, \ Nrd(\xi) = n \right\}. \]
Suppose $\xi \in \Omega^o(|\Delta|t^2)$, for some positive integer $t$; it induces an embedding $i_{\xi} \colon k \to B$ determined by the formula
\[ i_{\xi} \left( t \sqrt{\Delta} \right) = \xi. \]
We define \emph{the conductor $c(\xi)$ of $\xi$ } to be the smallest integer $c$ such that 
\[ i_{\xi}( o_c) \subset \OB, \]
where 
\[ o_c \  =  \  \Z [ c \sqrt{\Delta}]   \ \subset \  \ o_k\]
 is the (unique) order of conductor $c$. Note that since $\xi \in \OB$, it follows that $c(\xi)$ divides $t$, and one furthermore has the property $(c(\xi), D_B) = 1$, by valuation considerations. Finally, one easily checks that $c(\xi)$ only depends on the $\OBx$-conjugacy class of $\xi$. 
  
We may also consider the situation at primes dividing $D_B$. Let $p$ be such a prime, and recall our standing assumption that $p$ is inert in $k$. 
As the conductor $c(\xi)$ is relatively prime to $D_B$,  we may pass to the local algebras at $p$ and take quotients in order to obtain an isomorphism 
\[ \overline{i_{\xi,p}} \colon  \ o_{k,p} / (p) \ \isomto  \ \OBp  / (\theta), \]
where $\theta \in \OB$ is some fixed element such that $\theta^2 = -D_B$. Now suppose $\phi \colon o_k \to \OB$ is any embedding, which gives another isomorphism
\[ \overline \phi_p \colon  \ o_{k,p} / (p) \ \isomto  \ \OBp  / (\theta).  \]
As the source and target are both isomorphic to $\F_{p^2}$, there are two possibilities: either $\overline \phi_p = \overline{i_{\xi, p}}$, or they differ by the Frobenius on $\F_{p^2}$. 

It will be convenient to keep track of the various possibilities, over the prime factors of $D_B$, in the following way. We define the \emph{Frobenius type $\nu(\xi, \phi)$ of $\xi$ relative to $\phi$ } to be
\begin{equation}
 \nu(\xi, \phi) \ := \ \prod_{p | D_B} \ \nu_p(\xi, \phi), \qquad \text{where } \quad
\nu_{p}(\xi, \phi) := \begin{cases} 
						1,				&	\text{if } \overline \phi_{p} = \overline {i_{\xi,p}} \\
						p, 				&	\text{otherwise} .
				\end{cases}
\end{equation}

The following lemma, which is a straightforward exercise in quaternion algebras, implies that the Frobenius type is invariant under conjugation by $\OBx$ in both variables: 
\[ \nu( \xi, \phi) \ = \ \nu( \epsilon  \cdot \xi \cdot \epsilon^{-1} , \ \phi) \ = \ \nu(\xi, \ Ad_{\epsilon} \circ \phi ), \qquad \text{for any } \epsilon \in \OBx\]

\begin{lemma}  \label{frobTypeLemma}
Let $p | D_B$, and $\varphi, \varphi' \colon o_{k,p}  \to \OBp$ any two embeddings, with reductions 
\[ \overline \varphi, \overline \varphi' \colon \ o_{k,p} /( p) \  \to \  \OBp / (\theta). \]
Then $\overline \varphi = \overline \varphi'$ if and only if $\varphi = Ad_t \circ \varphi' $ for some $t \in B_p$ with $ord_p Nrd(t)$ even.   \qed
\end{lemma}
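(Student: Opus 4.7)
The plan is to define a homomorphism $\chi_p \colon B_p^\times \to \mathrm{Gal}(\F_{p^2}/\F_p)$ recording how conjugation acts on the residue field $\OBp/(\theta) \simeq \F_{p^2}$, identify its kernel with the subgroup of elements of even $Nrd$-valuation, and then invoke Skolem--Noether. The requisite structural facts are standard: since $p \mid D_B$ and $p$ is inert in $k$, the algebra $B_p$ is the unique quaternion division algebra over $\Q_p$, $\OBp$ is its unique maximal order, and $(\theta)$ is its unique maximal two-sided ideal (as $v_p(Nrd(\theta)) = v_p(D_B) = 1$). Moreover both $\OBp/(\theta)$ and $o_{k,p}/(p)$ are canonically $\F_{p^2}$, and reduction intertwines the Galois involution $a \mapsto a'$ on $o_{k,p}$ with the Frobenius on $\F_{p^2}$; in particular each of $\overline{\varphi},\overline{\varphi'}$ is either the identity or the Frobenius.

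The homomorphism $\chi_p$ is well-defined because $\OBp$ and its unique maximal ideal are both preserved under inner automorphisms of $B_p$. Its kernel contains $\Q_p^\times$ (by centrality) and $\OBp^\times$ (since $\F_{p^2}$ is commutative), hence the subgroup $\Q_p^\times \OBp^\times$; this subgroup equals $\{t \in B_p^\times : ord_p Nrd(t) \text{ is even}\}$ (if $n := ord_p Nrd(t) = 2m$, then $p^{-m}t \in \OBp^\times$). The content of the lemma is therefore to show that $\chi_p$ is nontrivial, i.e.\ that $\chi_p(\theta) = \mathrm{Frob}$.

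For the nontriviality computation, fix any embedding $\psi \colon o_{k,p} \hookrightarrow \OBp$ and set $x := \psi(\sqrt{\Delta})$. From $\theta^2 = -D_B$ one verifies (by multiplying both sides by $\theta$) the identity
\[ \theta \, x \, \theta^{-1} + x \ = \ - \frac{Trd(\theta x)}{D_B}\,\theta, \]
noting that $Trd(\theta x) = \theta x + \overline{\theta x} = \theta x + x\theta$ since $\overline{\theta} = -\theta$ and $\overline{x} = -x$. Because $\theta x$ and its conjugate $x\theta$ both lie in $(\theta)$, the trace $Trd(\theta x)$ lies in $(\theta) \cap \Q_p = p\Z_p$, and dividing by $D_B \in p\Z_p^\times$ yields an element of $\Z_p$; thus the right-hand side lies in $(\theta)$, whence $\theta x \theta^{-1} \equiv -x \pmod{(\theta)}$. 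On the other hand, the inertness of $p$ in $k$ gives $(\Delta/p) = -1$, so Euler's criterion yields $\overline{x}^p = -\overline{x}$, i.e.\ $\mathrm{Frob}(\overline{x}) = -\overline{x}$. Since $\overline{x}$ generates $\F_{p^2}$ over $\F_p$, we conclude $\chi_p(\theta) = \mathrm{Frob}$.

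The lemma now follows formally: by Skolem--Noether over $\Q_p$, the embeddings $\varphi,\varphi'$ are inner conjugate, $\varphi = Ad_t \circ \varphi'$ for some $t \in B_p^\times$, and then $\overline{\varphi} = \chi_p(t) \circ \overline{\varphi'}$. Since $\overline{\varphi'}$ is bijective, this equals $\overline{\varphi'}$ iff $\chi_p(t) = \mathrm{id}$, iff $ord_p Nrd(t)$ is even. The main obstacle is the nontriviality computation in the third paragraph, in particular the divisibility $p \mid Trd(\theta x)$; everything else is essentially formal.
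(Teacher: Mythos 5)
Your proof is correct. The paper itself gives no argument for this lemma (it is stated with a \qed as ``a straightforward exercise in quaternion algebras''), so there is nothing to compare against; what you have written is the natural way to fill that gap: the character $\chi_p\colon B_p^{\times}\to\mathrm{Gal}(\F_{p^2}/\F_p)$ coming from the conjugation action on $\OBp/(\theta)$, the identification of its kernel with $\Q_p^{\times}\mathcal{O}_{B,p}^{\times}$ (the even-valuation elements), the computation $\theta x\theta^{-1}\equiv -x \pmod{(\theta)}$ showing $\chi_p(\theta)=\mathrm{Frob}$, and Skolem--Noether to reduce the lemma to the kernel statement. Your key identity and the divisibility $Trd(\theta x)\in(\theta)\cap\Q_p=p\Z_p$ check out, and the final step is indeed formal. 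The only points worth making explicit are the standing hypotheses you use silently: $p$ is odd (since $B$ is unramified at $2$), which is needed both for Euler's criterion and for $-\bar x\neq\bar x$, and $p\nmid\Delta$ (forced by inertness), which guarantees $\bar x\neq 0$ and that $\bar x$ generates $\F_{p^2}$ over $\F_p$.
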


All told, for a fixed embedding $\phi$, we obtain a disjoint decomposition
\[ \Omega^o(|\Delta|t^2) \ = \ \coprod_{ \substack{ c | t \\ (c, D_B) = 1 } } \coprod_{\nu | D_B} \Omega^o(|\Delta|t^2, c, \nu; \phi) \]
 where $\Omega^o(|\Delta|a^2,c, \nu; \phi)$ is the  subset of elements $\xi \in \Omega^o(|\Delta|a^2)$ with $c(\xi) = c$ and $\nu(\xi, \phi) = \nu$.

% In light of the complex uniformization of Proposition \ref{??} and the $\OBx$-invariance of conductors and Frobenius types, we obtain a corresponding decomposition
% \[ \Zed^o(|\Delta| t^2)(\C) \ = \ \sum_{ \substack{c | t \\ (c, D_B) =1 }} \sum_{\nu | D_B} Z^o(|\Delta|t^2, c, \nu; \phi), \]
% where
% \[ Z^o(|\Delta| t^2, c, \nu; \phi) \ = \ \left[ \OBx \ \Big\backslash \ \coprod_{\xi \in \Omega^o(|\Delta|t^2, c, \nu; \phi) } D_{\xi} \right] .\]
% 
The main proposition of this section is a comparison between $\Omega^o(|\Delta|m^2)$ and $\Omega^{\pm}(m, \lie{a}, \phi)$, whose definition we recall:
\[ \Omega^{\pm} (m, \lie{a}, \phi):= \left\{ y \in \phi(\lie{a})^{-1} \OB \ \Big{|}  \ (y,y)_{\phi} =  \Delta Nrd(y) = \pm \frac{m}{ N(\lie{a})}  \right\} \]
There is a natural map
\begin{align*}
\Omega^+(m, \lie{a}, \phi) \ \ \to& \ \  \Omega^o(|\Delta|m^2)  \\
\eta \ \mapsto&  \ \ m \cdot \eta^{-1} \phi(\sqrt{\Delta}) \eta 
\end{align*}
whose image lies in the subset
\[ \Omega^o(|\Delta|m^2)^{\phi-\text{pos}}  \ := \ \left\{ \xi \in \Omega^o(|\Delta|m^2) \ | \ \xi = m b^{-1} \phi(\sqrt{\Delta}) b \ \text{for some } b \in B^{\times} \text{ with }(b,b)_{\phi}>0 \right\}. \]
 
% \subsection{ Comparing $\Omega^{\pm}(m, \lie{a},\phi)$ and $\Omega^o(|\Delta|m^2)$ }
 \begin{proposition} \label{OmegasCompProp}
 Fix a set of representatives $\{ \lie{a}_1, \dots, \lie{a}_h \}$ of $Cl(k)$, and consider the map
 \[ f \colon  \coprod_{i = 1}^{h} \ \Omega^+(m, \lie{a}_i, \phi) \ \to \ \Omega^o(|\Delta|m^2)^{\phi-\text{pos}}, \qquad \eta \mapsto \xi = m \cdot \eta^{-1}  \phi(\sqrt{\Delta}) \eta .\]
 Then for any $\xi \in \Omega^o(|\Delta|m^2)^{\phi-\text{pos}}$, we have
 \[ \#  f^{-1}(\xi) \ = \ |o_k^{\times}| \ \rho \left( \frac{m}{c(\xi) \ \nu(\xi, \phi) \ |\Delta|} \right) \]
 where $\rho(N)$ denotes the number of (integral) $o_k$-ideals of norm $N$. 
 \begin{proof}
 By definition, 
 \[ f^{-1}(\xi) \ = \ \coprod_{i=1}^h  \left\{ b \in \phi(\lie{a}_i)^{-1}\OB \ | \ Nrd(b) = \frac{m}{\Delta N(\lie{a}_i)}, \ \xi = m \cdot b^{-1} \phi(\sqrt{\Delta}) b \right\}. \]
 By assumption, there exists an element $b_0 \in B^{\times}$ such that $\xi = m \cdot b_0^{-1} \phi(\sqrt{\Delta}) b_0$ and $(b_0, b_0)_{\phi} = \Delta Nrd(b_0)>0$. Note that if $b_1 \in B^{\times}$ is another such element, then
 \[ b_1 = \phi(a) b_0 , \qquad \text{for some } a \in k^{\times}. \]
 
 Hence, fixing one such choice $b_0 \in B^{\times}$, we have
 \begin{align}
 \notag \# f^{-1}(\xi) \ =& \ \# \coprod_{i=1}^h  \left\{ a \in k \ | \ N(a) = \frac{m}{\Delta N(\lie{a}_i) Nrd(b_0)}, \ \phi(a) \in \phi(\lie{a}_i)^{-1} \OB b_0^{-1} \right\}  \\
  =& \ |o_k^{\times}| \cdot \# \left\{ \lie{a} \subset k \text{ a fractional ideal} \ | \ N(\lie{a}) = \frac{m}{\Delta Nrd(b_0) }, \ \phi(\lie{a}) \subset \OB b_0^{-1} \right\}. \label{fibreOmegaEqn}
  \end{align}
 We shall analyze the ideals appearing in the above display on a prime-by-prime basis. To start, suppose $p \nmid D_B$. We may fix an isomorphism $\OBp \simeq M_2(\Z_p)$, and without loss of generality, we may assume that $\phi(\sqrt{\Delta})$ is identified with the matrix $(\begin{smallmatrix} & 1 \\ \Delta & \end{smallmatrix})$  - note that the assumption that $\Delta$ is even ensures that this is possible even when $p = 2$. 
 
 Let $c = c(\xi)$ denote the conductor of $\xi$, and set
 \[ w(c)_p := \begin{pmatrix} c & \\  & 1 \end{pmatrix} \ \in \OBp. \]
 One easily checks that the embedding $\phi_c := Ad_{w(c)_p} \circ \phi$ has conductor $c$; in other words
 \[ \phi_c(o_{c,p} ) \ = \ \phi_c(k_p) \cap \OBp. \]
 By assumption, the same is true for the embedding $i_{\xi,p}$. Hence, by \cite[Theoreme 3.2]{Vig}, these two embeddings differ by conjugation by an element of $\mathcal{O}_{B,p}^{\times}$, and so there exist elements $a_p \in k_p$ and $u_p \in \mathcal{O}_{B,p}^{\times}$ such that
 \[ b_0 \ = \ \phi(a_p) \ w(c)_p \ u_p \ \in B_p^{\times}. \]
 
Next, suppose $p \mid D_B$, and so in particular $p$ is inert in $k$. Fix a uniformizer $\Pi \in \OBp$.  We may then write
\[ b_0 = \phi(a_p) \cdot \Pi^{e_p}  \cdot u_p  \in B_p^{\times},\]
where $a_p \in k_p$, $e_p$ is either 0 or 1, and $u \in \mathcal{O}_{B,p}^{\times}$.  By  \refLemma{frobTypeLemma}, we have
\[ e_p \ = \ ord_p \nu_p(\xi, \phi) .\]

Let $\lie{a}_0 := \left( \prod_{p} a_p \cdot o_{k,p} \right) \cap k$ denote the fractional ideal corresponding to the finite adele $(a_p)$. By the product formula, we have
\begin{align*}
|Nrd(b_0)| \ =& \ \prod_{p \nmid D_B} | n(a_p) \cdot Nrd(w(c)_p) |^{-1}_p \ \prod_{p | D_B} | n(a_p) \cdot Nrd(\Pi)^{e_p} |^{-1}_p \\
=& \ N(\lie{a_0}) \prod_{p \nmid D_B} |Nrd(w(c)_p)|^{-1}_p \ \prod_{p|D_B} |Nrd(\Pi)^{e_p}|^{-1}_p \\
=& \ N(\lie{a_0}) \cdot c \cdot \nu(\xi, \phi).
\end{align*}
Thus, replacing the ideals $\lie{a}$ appearing in \eqref{fibreOmegaEqn} by $\lie{a_0} \cdot \lie{a}$, and recalling that $Nrd(b)<0$ by assumption, we have
\begin{align*}
\# f^{-1}(\xi)  \ = \ |o_k^{\times}| \cdot \#  \Big\{ \lie{a} \subset k \ | \ N(\lie{a}) = \frac{m}{|\Delta| c(\xi) \nu(\xi, \eta)  }, \  \phi(\lie{a}  &) \subset \OBp w(c)_p^{-1} \text{ for all } p \nmid D_B,  \\ 
&\text{and } \phi(\lie{a}) \subset \OBp \Pi^{-e_p} \text{ for all } p | D_B \Big\}.
\end{align*}
Finally, it remains to check that the ideals appearing in the above display are in fact integral. Indeed, one checks locally that for such an ideal $\lie{a}$, we have $\phi(\lie{a}) \subset \OBp$ for all primes $p$: when $p | D_B$, this follows from valuation considerations, and when $p \nmid D_B$, this follows from the particular choice of $w(c)_p$. Hence $\phi(\lie{a}) \subset \OB$, and since $\phi$ is optimal, it follows that $\lie{a} \subset o_k$. 
 \end{proof}
 \end{proposition}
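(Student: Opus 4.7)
The plan is to analyze the fibre $f^{-1}(\xi)$ by reducing the condition $\xi = m \eta^{-1} \phi(\sqrt{\Delta}) \eta$ to an intertwining condition on embeddings, and then count integral ideals of the correct norm via local analysis at each prime.

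\textbf{Step 1 (Intertwining reformulation).} For $\eta \in B^{\times}$, the relation $\xi = m \eta^{-1} \phi(\sqrt{\Delta})\eta$ is equivalent to $\eta \cdot i_{\xi} = \phi \cdot \eta$ as maps $k \to B$, since $i_{\xi}$ is defined by $i_{\xi}(m\sqrt{\Delta}) = \xi$. Fix once and for all a single $b_0 \in B^{\times}$ conjugating $i_{\xi}$ to $\phi$, chosen so that $(b_0,b_0)_\phi > 0$ (which is possible precisely because $\xi$ lies in $\Omega^o(|\Delta|m^2)^{\phi\text{-pos}}$). Since the centralizer of $\phi(k)$ in $B$ equals $\phi(k)$, every other such intertwiner has the form $\phi(a) b_0$ for a unique $a \in k^{\times}$.

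\textbf{Step 2 (Reinterpretation as integral ideals).} Grouping the disjoint union $\coprod_i \Omega^+(m, \lie{a}_i, \phi)$ according to the ideal class of the varying element, I would replace the double-counting over $(\lie{a}_i, a \in k^{\times})$ with a sum over all fractional ideals $\lie{a} \subset k$ modulo the obvious $o_k^{\times}$ ambiguity in $a$. Concretely, $\phi(a) b_0 \in \phi(\lie{a}_i)^{-1} \OB$ becomes the integrality condition $\phi(\lie{a}) \subset \OB \cdot b_0^{-1}$ for $\lie{a} := a \cdot \lie{a}_i$, and the norm condition $Nrd(\phi(a) b_0) = m/(\Delta N(\lie{a}_i))$ turns into a condition purely on $N(\lie{a})$ once $Nrd(b_0)$ is pinned down. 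The count $\#f^{-1}(\xi)$ therefore equals $|o_k^{\times}|$ times the number of fractional ideals $\lie{a}$ of the appropriate norm satisfying this local containment.

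\textbf{Step 3 (Local analysis and the factor $c(\xi) \nu(\xi,\phi)$).} To identify the norm condition and verify integrality, I would compute $b_0$ prime-by-prime. At a prime $p \nmid D_B$, both $i_{\xi}$ and $\phi$ embed $o_{k,p}$ into $\OBp \cong M_2(\Z_p)$, but with potentially different conductors: $i_{\xi}$ has conductor $c(\xi)_p$ while $\phi$ is optimal. Using Vigneras's local embedding theorem, I can conjugate $\phi$ by the explicit matrix $w(c)_p = \mathrm{diag}(c,1)$ to reach a model of conductor $c$, after which $i_{\xi}$ and $Ad_{w(c)_p}\circ \phi$ become $\OBp^{\times}$-conjugate. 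This gives a factorization $b_0 = \phi(a_p) w(c)_p u_p$ locally at $p$, contributing $|Nrd(b_0)|_p^{-1} = |n(a_p)|_p^{-1} \cdot c(\xi)_p$. At a prime $p | D_B$, the inertness of $p$ in $k$ makes $\OBp$ the unique maximal order containing $o_{k,p}$, and any two embeddings agree modulo the uniformizer $\Pi$ up to Frobenius (Lemma \ref{frobTypeLemma}). Hence $b_0 = \phi(a_p)\, \Pi^{e_p}\, u_p$ with $e_p \in \{0,1\}$ and, by the very definition of the Frobenius type, $e_p = ord_p \nu_p(\xi, \phi)$. Taking the product formula yields $|Nrd(b_0)| = N(\lie{a}_0) \cdot c(\xi) \cdot \nu(\xi,\phi)$ for a certain fractional ideal $\lie{a}_0$, so the norm condition on $\lie{a}$ becomes $N(\lie{a}) = m/(|\Delta|\, c(\xi)\, \nu(\xi,\phi))$.

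\textbf{Step 4 (Integrality of the ideals).} It remains to observe that the containment $\phi(\lie{a}) \subset \OB \cdot b_0^{-1}$ forces $\lie{a}$ to be an integral ideal of $o_k$. At $p \nmid D_B$ this follows from the explicit form of $w(c)_p$ and the optimality of $\phi$; at $p | D_B$ it follows from valuation considerations on the uniformizer $\Pi$. So the count reduces exactly to $|o_k^{\times}|$ times the number of integral $o_k$-ideals of norm $m/(|\Delta|\, c(\xi)\, \nu(\xi, \phi))$, which is $\rho$ of that quantity.

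The main technical obstacle will be Step 3 at the primes dividing $D_B$: one needs to verify carefully that the Frobenius type, defined in terms of reductions modulo $\theta$, matches the valuation of a local factor of $b_0$, and this is precisely where Lemma \ref{frobTypeLemma} is indispensable. Everything else is bookkeeping once the local normal forms of $b_0$ are in hand.
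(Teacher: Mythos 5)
Your proposal is correct and follows essentially the same route as the paper's proof: fixing one intertwiner $b_0$ with $(b_0,b_0)_\phi>0$, using the centralizer $\phi(k)$ to parametrize the fibre by $k^{\times}$ and hence by fractional ideals up to $o_k^{\times}$, then pinning down $|Nrd(b_0)|=N(\lie{a}_0)\,c(\xi)\,\nu(\xi,\phi)$ by the local factorizations (Vigneras at $p\nmid D_B$, \refLemma{frobTypeLemma} at $p\mid D_B$) and the product formula, and finally checking integrality via optimality of $\phi$. No substantive differences from the paper's argument.
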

 
Similarly, if we set 
 \[ \Omega^o(|\Delta|m^2)^{\phi-\text{neg}}  \ := \ \left\{ \xi \in \Omega^o(|\Delta|m^2) \ | \ \xi = m b^{-1} \phi(\sqrt{\Delta}) b \ \text{for some } b \in B^{\times} \text{ with }(b,b)_{\phi}<0 \right\}, \]
 then we obtain a map
 \[ g \colon \coprod_{[\lie{a}]}  \Omega^-(m, \lie{a}, \phi) \ \to \ \Omega^o(|\Delta|m^2)^{\phi-\text{neg}}, \]
 whose fibres, by the same argument as above, have cardinality 
 \[ \#  g^{-1}(\xi) \ = \ |o_k^{\times}| \ \rho \left( \frac{m}{c(\xi) \ \nu(\xi, \phi) \ |\Delta|} \right). \]
 We also observe in passing that by the Noether-Skolem theorem, we have
 \[ \Omega^o(|\Delta|m^2)  \ = \ \Omega^o(|\Delta|m^2)^{\phi-\text{pos}} \ \coprod \ \Omega^o(|\Delta|m^2)^{\phi-\text{neg}}. \]

The following calculation will appear several times in the sequel, and so will be worthwhile to state on its own:
\begin{lemma} \label{rhoLemma}
For any $m>0$, we have
\[   \sum_{\nu | D_B} \ \rho \left( \frac{ m}{\nu} \right) \ = \ \sum_{a | m} \ \chi'(a), \]
where  $\chi'$ is the character appearing in \eqref{chi'DefEqn}.
\begin{proof}
First, we claim that for any integer $N >0$, we have 
\[ \rho(N) \ = \ \sum_{a | N} \chi_k(a).\]
Indeed, both sides are multiplicative, so it suffices to verify this formula when $N$ is a power of a prime $p$. The formula is easily verified by considering separately the cases where $p$ is inert, split, or ramified in $k$. 

Next, we note that if $ord_pN $ is even for every prime $p | D_B$, then
\[ \sum_{a | N} \chi_k(a) \ = \ \sum_{\substack{ a| N  \\ (a, D_B) = 1}} \chi_k(a) \ = \ \sum_{a | N} \chi'(a). \]
Finally, we observe that for a given $m$, there is a \emph{unique} $\nu^* | D_B$ such that $ord_p(m/ \nu^*)$ is even for all $p|D_B$. If $\nu \neq \nu^*$, then $\rho( m / \nu) = 0$, because an inert prime occurs to an odd power in $m/\nu$. 
Thus
\[ \sum_{\nu | D_B} \rho( m / \nu) \ = \ \rho (m / \nu^*) \ = \sum_{a | (m / \nu^*)} \ \chi'(a) \ = \ \sum_{a | m} \chi'(a). \]
\end{proof}
\end{lemma}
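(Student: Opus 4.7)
\emph{Plan of proof.} The natural approach is to reduce the statement to the classical identity
\[
\rho(N) \ = \ \sum_{d \mid N} \chi_k(d),
\]
which expresses the ideal-counting function on $o_k$ in terms of the Kronecker character, and then to rearrange the double sum. My first step would be to recall this identity, verifying it by noting that both sides are multiplicative and checking the three possibilities at each prime power (split, inert, ramified). In particular, for primes $p \mid D_B$, which are inert in $k$ by the standing hypothesis, we have $\chi_k(p) = -1$ and hence $\rho(p^e) = 1$ or $0$ according as $e$ is even or odd.

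Next I would use multiplicativity to analyze $\sum_{\nu \mid D_B} \rho(m/\nu)$. Writing $m = \prod_p p^{a_p}$, the previous paragraph forces the nonzero contributions to come exactly from those $\nu$ for which $\mathrm{ord}_p(m/\nu)$ is even for every $p \mid D_B$. Since $D_B$ is squarefree, there is a \emph{unique} such divisor, namely
\[
\nu^* \ := \ \prod_{\substack{p \mid D_B \\ a_p \text{ odd}}} p,
\]
provided $\nu^* \mid m$ (otherwise both sides are easily checked to vanish). Thus the left-hand side collapses to $\rho(m/\nu^*) = \sum_{d \mid m/\nu^*} \chi_k(d)$.

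It then remains to identify this with $\sum_{d \mid m} \chi'(d)$. Because $\Delta$ is even, the conductor of $\chi_k$ already kills divisors sharing a prime with $2|\Delta|$; combined with the definition of $\chi'$, one sees that $\chi'(d) = \chi_k(d)$ if $(d,D_B) = 1$ and $\chi'(d) = 0$ otherwise. So the right-hand side is $\sum_{d \mid m,\ (d,D_B)=1} \chi_k(d)$, and I would compare this prime by prime against $\sum_{d \mid m/\nu^*} \chi_k(d)$. At primes $p \nmid D_B$ the local factors obviously agree. At each $p \mid D_B$, the local factor on the left is $\sum_{i=0}^{\mathrm{ord}_p(m/\nu^*)} (-1)^i$ — a geometric series of even length, equal to $1$ — while the local factor on the right is the single term $i = 0$, also equal to $1$.

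The only step that requires a touch of care is the last local comparison at primes of $D_B$, but it is purely formal once one has established the existence and uniqueness of $\nu^*$. The rest is routine multiplicativity bookkeeping, so I do not expect any real obstacle.
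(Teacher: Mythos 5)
Your argument is correct and follows essentially the same route as the paper: the identity $\rho(N)=\sum_{d\mid N}\chi_k(d)$ proved by multiplicativity, the uniqueness of $\nu^*$ forcing the left-hand side to collapse to $\rho(m/\nu^*)$, and the local comparison at primes of $D_B$ identifying this with $\sum_{d\mid m}\chi'(d)$. (Two cosmetic quibbles only: the alternating geometric series at $p\mid D_B$ has an odd number of terms — it is the top exponent that is even — and your caveat ``provided $\nu^*\mid m$'' is vacuous, since every prime occurring in $\nu^*$ occurs to an odd, hence positive, power in $m$.)
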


\section{Proof of main theorem: Analytic components} \label{anSec}
In this section, we prove the Shimura lift formula for the `analytic' components of our generating series. We begin by briefly recalling how these components are defined. Let 
$\mathbf f = (f^{\sigma_0}, f^{\sigma_1})$ denote a pair smooth functions on $\CBok(\C^{\sigma_0})$ and $\CBok(\C^{\sigma_1})$ respectively, such that 
\begin{equation} \label{grFnSymEqn}
f^{\sigma_1} \ = \ \iota^* f^{\sigma_0} 
\end{equation}
and
\begin{equation} \label{fnOrthToHodgeEqn}
 \int_{\CB(\C^{\sigma_i})} f^{\sigma_i} \cdot c_1( \widehat\omega)^{\sigma_i} = 0, 
\end{equation}
where $c_1(\widehat\omega)^{\sigma_i}$ is the first Chern form of the Hodge class $\widehat{\omega}$. 
We obtain a class $\hat{\mathbf f} := (0, \mathbf f) \in \CHR(\CBok)$. If $ \widehat\Zed = (\Zed, g({\Zed}))$ is any arithmetic class, then
\begin{equation*} 
 \la \widehat\Zed,  \hat{\mathbf f } \ra \ = \ \frac{1}{2} \sum_{i=0}^1 \int_{\CB(\C^{\sigma_i})}  f^{\sigma_i} \cdot \omega^{\sigma_i}({\Zed}), 
\end{equation*}
where $\omega^{\sigma_i}(\Zed)$ is the smooth $(1,1)$-form on $\CB(\C^{\sigma_i})$ such that
\[ d d^c g^{\sigma_i}(\Zed) + \ \delta_{\Zed(\C^{\sigma_i})} \ = \ [ \omega^{\sigma_i}(\Zed) ]. \]
As  $g^{\sigma_1}(\Zed) = \iota^* g^{\sigma_0}(\Zed)$ we find that
\begin{equation} \label{anPairEqn}
\la \widehat\Zed, \hat{\mathbf f }\ra \ = \  \int_{\CB(\C^{\sigma_0})}  f^{\sigma_0} \cdot \omega^{\sigma_0}({\Zed}).
\end{equation}

Applying this discussion first to the orthogonal generating series we have that for $\tau = u + iv \in \lie{H}$, 
\begin{equation}
\la \widehat\Phi_{/o_k}^o(\tau), \hat{\mathbf f} \ra \ =  \  \sum_{\substack{n \in \Z \\ n \neq 0}}  \ \left(\int_{\CB(\C)} \ f^{\sigma_0}   \cdot \omega^{\sigma_0}( {\Zed^o(n,v)} )\right) \ q^n.
\end{equation}
We recall  $\omega^{\sigma_0}\left({\Zed^o(n,v)}\right)$ has the following explicit form, cf. \eqref{oSCgrEqn},
\[ \omega^{\sigma_0}\left(\Zed^o(n,v)\right) \ = \ \psi^o(n,v) \cdot c_1(\widehat\omega),\]
in terms of the uniformization $\CB(\C^{\sigma_0}) \ = \ [ \OBxone \backslash \lie{H} ] $.
Here $\psi^o(n,v)(z)$ is the $\OBxone$-invariant function on $\lie{H}$ given by
\begin{equation} \label{psioDefEqn}
 \psi^o(n, v)(z)  \ = \ \sum_{x \in \Omega^o(n)} \ \left( 4 \pi v \left\{ R^o(x, z) + 2 Nrd(x) \right\}  - 1 \right) \ e^{-2 \pi v R^o(x,z)}, 
\end{equation}
where $R^o(\cdot, z)$ is the majorant determined by $z$ as in \eqref{RoDefEqn}. 

Turning now to the unitary generating series, recall that for $m \in \Z$ with $m \neq 0$ and $y \in \R_{>0}$, we had defined classes $\widehat\Zed(m, \phi, \eta)$ -- and their re-scaled variants $\widehat\Zed{}^*(m,\phi,\eta)$, cf. \eqref{Z*DefEqn} -- where the corresponding smooth $(1,1)$-form $\omega^{\sigma_0}(\widehat\Zed(m, \phi, \eta))$ can be written as follows:
\[ \omega^{\sigma_0}\left(\widehat\Zed(m, \phi, \eta ) \right)= \frac{1}{|o_k^{\times}|}\psi^u(m, \phi, \eta) \cdot c_1(\widehat\omega ). \]
Here, for $z \in \lie{H}$ and $\eta \in \R_{>0}$, we recall that
\begin{align}
 \psi^u(m, \phi, \eta) \ =& \frac{1}{2}  \sum_{[\lie{a} ] \in Cl(k)} \Bigg\{ \sum_{y \in  \Omega^+(m, \lie{a}, \phi)} \Bigg[2 \pi N(\lie{a}) \eta \Big( R_{\phi}(  y, z) +2 \Delta Nrd( y) \Big) -1 \Bigg] e^{-2 \pi N(\lie{a}) \eta R_{\phi}(\mu y, z)} \notag  \\
 & \qquad  \  +  \sum_{y \in  \Omega^-(m, \lie{a}, \phi)} \Bigg[2 \pi N(\lie{a}) \eta R_{\phi}( y, z) -1 \Bigg] e^{-2 \pi N(\lie{a}) \eta(R_{\phi}( y, z)  + 2 \Delta Nrd( y))} \Bigg\} 
 \label{psiUDefEqn}.
 \end{align}
 
By \eqref{fnOrthToHodgeEqn}, we have $\la \widehat\Zed(0, v) ,  \widehat {\mathbf f} \ra = 0$, and so
\[ \la \widehat\Phi{}^u (\tau), \widehat{ \mathbf f} \ra \ = \frac{1}{4 h(k) }  \sum_{\substack{m \in \Z \\ m \neq 0}}  \ \sum_{[\phi]}  \ \left( \int_{\CB(\C)} f^{\sigma_0} \cdot \left\{ \omega^{\sigma_0} \left( {\widehat\Zed(m,\phi, v)} \right) + \omega^{\sigma_0}\left( \widehat\Zed{}^*(m ,\phi, v \right) \right\} \right) q^m. \]

%The next  lemmas show that for the analytic components, the cycles $\widehat{\Zed}(m, \phi, v)$ and $\widehat{\Zed}{}^*(m, \phi, v)$ contribute in equal measure. 

\begin{lemma} \label{genFibreUnitCorLemma} Suppose $\phi \colon o_k \to \OB$ is an optimal embedding, $m \in \Z$, and $\mu \in \OB$ such that $Nrd(\mu) = (m, D_B)$. Set $\phi' :=  Ad_{\mu^{-1}} \circ \phi$, and note that $\phi'$ is again optimal. Then
	\begin{enumerate}[(i)]
		\item $\Zed^*(m, \phi')_{k} \ = \ \Zed(m, \phi)_{k}$ and
		\[ \sum_{[\phi] \in Opt / \OBxone} \Zed(m, \phi)_{k} \  = \ \sum_{[\phi]} \ \Zed^* \left( m, \phi \right)_{k} \]
		as cycles on $\CB_{/k}$;
		\item  $Gr^{u*}(m, \phi', \eta) = Gr^u(m, \phi, \eta)$;
		\item and $\omega^{\sigma_0}\left( \ \widehat\Zed{}^*(m , \phi', \eta )\  \right) \ = \ \omega^{\sigma_0}\left( \  \widehat\Zed(m,  \phi,\eta )\ \right)$.
	\end{enumerate}
\begin{proof}

(i) For any fractional ideal $\lie{a}$, left-multiplication by $\mu$ gives a bijection
\[ \Omega^{\pm}(m/ (m,D_B), \lie{a}, \phi') \isomto \Omega^{\pm} \left( m, \ \lie{a}, \ \phi \right), \qquad y \mapsto \mu \cdot y.\]
Furthermore, the map 
\[ \D(B^{\phi'}_{\R}) \isomto \D(B^{\phi}_{\R}) \qquad \zeta \mapsto \mu \zeta \]
is an isomorphism that identifies $\D_{y} = y^{\perp} \in \D(B^{\phi'}_{\R})^- $ with $\D_{\mu \cdot y} = (\mu y )^{\perp} \in \D(B^{\phi}_{\R})^-$ when $ y \in \Omega^+(\dots)$, and identifies $\D_{y} = span_{\phi'(k_{\R})}(y) $ with $D_{\mu y} = span_{\phi(k_{\R}})(\mu y)$, when $y \in \Omega^-(\dots)$ . The first claim in the lemma then follows immediately from the complex uniformizations. 
The second claim follows by noting that the assignment $\phi \mapsto Ad_{u^{-1}} \circ \phi $ acts as a permutation on the set of $\OBxone$ classes of optimal embeddings.
%\end{proof}
%\end{lemma}

%\begin{lemma} \label{starGrLemma}
%Suppose $ m \in \Z$, $m \neq 0$. Fix an element $\mu \in \OB$ such that $Nrd(\mu) = (|m|, D_B)$. Let $\phi$ be an optimal embedding, and set 
%$\phi' = Ad_{\mu^{-1}} \circ \phi$. Then
%\[ Gr^{u*}(m, \phi', \eta) = Gr^u(m, \phi, \eta). \]
%In particular
%\[ \omega^{\sigma_0}\left( \ \widehat\Zed{}^*(m , \phi', \eta )\  \right) \ = \ \omega^{\sigma_0}\left( \  \widehat\Zed(m,  \phi,\eta )\ \right).\]
%\begin{proof}

(ii) Set $d := (|m|, D_B)$ for notational simplicity.
Recall that for a point $z \in \lie{H}$ and a vector $b \in B_{\R}$, we had defined the majorant
\[ R_{\phi}(b, z) \ :=  \ - 2( pr_{\zeta(z)}(b), \  pr_{\zeta(z)} (b) )_{\phi},  \]
where $\zeta(z) := \{ v \in B_{\R} \ |\  v J_z = - \phi(\mathbb I_k) v \}$
is the $\phi(k_{\R})$-stable negative line in $B_{\R}$ corresponding to $z$, and $pr_{\zeta(z)}$ is the orthogonal projection onto this line with respect to the hermitian form $(\cdot, \cdot)_{\phi}$ defined in \eqref{hermFormEqn}. 

A direct calculation reveals that for $\phi' = Ad_{\mu^{-1}} \circ \phi$, we have
\[ R_{\phi'} (b, z ) \ = \ Nrd(\mu^{-1}) R_{\phi}(\mu \cdot b, z) \ = \ d^{-1} R_{\phi}(\mu \cdot b , z). \]
Recall also that for any fractional ideal $\lie{a}$, we have a bijection
\[ \Omega^{\pm}(m / d, \lie{a}, \phi') \isomto \Omega^{\pm} (m, \lie{a}, \phi), \qquad y \mapsto \mu \cdot y.  \]
Thus
\begin{align*}
Gr^{u*}(m, \phi', \eta)(z) \ =&  \ \frac{1}{|o_k^{\times}|}  \sum_{[\lie{a}] \in Cl(k)} \Big[ \sum_{y \in \Omega^{+}(m/d, \lie{a}, \phi')}  \beta_1 \left( \frac{2 \pi N(\lie{a}) \eta}{d} \cdot R_{\phi'}(y, z) \right) \\
 &  \qquad \qquad +  \sum_{y \in \Omega^{-}(m/d, \lie{a}, \phi')}  \beta_1\left(\frac{ 2 \pi N(\lie{a}) \eta}{d} \cdot ( R_{\phi'}(y, z) + 2\Delta Nrd(y) ) \right) \Big]   \\
  =&  \ \frac{1}{|o_k^{\times}|}  \sum_{[\lie{a}] \in Cl(k)} \Big[ \sum_{y \in \Omega^{+}(m/d, \lie{a}, \phi')}  \beta_1\big( 2 \pi N(\lie{a}) \eta \cdot R_{\phi}( \mu y, z) \big) \\
 &  \qquad \qquad +  \sum_{y \in \Omega^{-}(m/d, \lie{a}, \phi')}  \beta_1\big( 2 \pi N(\lie{a}) \eta \cdot ( R_{\phi}( \mu y, z) + 2\Delta Nrd( \mu y) ) \big) \Big]  \\
=& \ Gr^u(m, \phi, \eta),
\end{align*}
as required.

(iii) follows immediately from (i) and (ii), via Green's equation \eqref{GrGenEqn}. 
\end{proof}
\end{lemma}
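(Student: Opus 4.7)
The three parts all descend from a single underlying bijection: for each fractional ideal $\lie{a}$ of $o_k$, left-multiplication by $\mu$ should give
\[ \Omega^{\pm}\!\left( \tfrac{m}{d}, \lie{a}, \phi' \right) \ \isomto \ \Omega^{\pm}(m, \lie{a}, \phi), \qquad y \mapsto \mu y, \]
where $d := (|m|, D_B)$. The rescalings $m \mapsto m/d$ and $\eta \mapsto d^{1/2} \eta$ built into the $*$-variants are precisely the adjustments needed to absorb the norm factor $Nrd(\mu) = d$ introduced by this map.

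My first step is to establish this bijection. The norm condition $Nrd(\mu y) = m/(\Delta N(\lie{a}))$ is immediate. The non-trivial point is integrality: $\mu y \in \phi(\lie{a})^{-1} \OB$ for $y \in \phi'(\lie{a})^{-1} \OB$, and likewise surjectivity.  This is a local question, trivial away from primes $p \mid d$.  At $p \mid d$ (so $p \mid D_B$ and $p$ inert in $k$), the algebra $B_p$ is a division algebra, $\OB_p$ is its unique maximal order, $\mu$ is a uniformizer up to a unit, and $\phi(\lie{a})^{-1}_p \OB_p$ is a power of the uniformizer $\Pi$. The required containments then boil down to a valuation inequality forced by the constraint on $Nrd(y)$ together with $p \mid m$. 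As a byproduct, this same local analysis shows that $\phi'$ is itself optimal.

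With the bijection in hand, part (i) follows by comparing the complex uniformizations of \refProp{cpxUnifProp}: under the isomorphism $\D(B^{\phi'}_{\R}) \simeq \D(B^\phi_\R)$, $\zeta \mapsto \mu \zeta$, the line $\zeta'(z)$ is carried to $\zeta(z)$, and the individual loci $\D_y$, $\D'_y$ are carried to $\D_{\mu y}$, $\D'_{\mu y}$. The second assertion of (i) further requires that $[\phi] \mapsto [Ad_{\mu^{-1}} \phi]$ is a well-defined permutation of $Opt / \OBxone$; the crucial ingredient is that conjugation by $\mu$ preserves $\OB_p^{\times, 1}$ at every prime $p \mid d$, since $\OB_p$ is the unique maximal order of $B_p$.

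For (ii), the key identity is
\[ R_{\phi'}(b, z) = d^{-1}\, R_\phi(\mu b, z), \]
which follows from the rescaling $(\mu x, \mu y)_\phi = d \cdot (x,y)_{\phi'}$ (a trace manipulation using $\mu \mu^\iota = d$) combined with $\mu \zeta'(z) = \zeta(z)$. Together with $Nrd(\mu b) = d \cdot Nrd(b)$ (needed for the $\Omega^-$ sum), the factor $d$ coming from $\eta \mapsto d^{1/2}\eta$ cancels the $d^{-1}$, allowing term-by-term matching of the two Green functions under our bijection. Part (iii) is then formal: Green's equation \eqref{GrGenEqn} produces the same smooth $(1,1)$-form from equal divisors with equal Green functions. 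The principal obstacle I anticipate is the integrality analysis in Step 1: because $\mu$ is not a unit in $\OB$, the containment $\mu y \in \phi(\lie{a})^{-1}\OB$ does not follow formally and must be extracted through careful valuation bookkeeping at the primes dividing $d$.
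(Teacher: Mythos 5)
Your proposal follows essentially the same route as the paper's own proof: the bijection $y \mapsto \mu y$ between $\Omega^{\pm}(m/d, \lie{a}, \phi')$ and $\Omega^{\pm}(m, \lie{a}, \phi)$, the identification $\mu\,\zeta'(z) = \zeta(z)$ of the corresponding lines, the scaling identity $R_{\phi'}(b,z) = d^{-1} R_{\phi}(\mu b, z)$ coming from $(\mu x, \mu y)_{\phi} = d\,(x,y)_{\phi'}$, and the formal deduction of (iii) from Green's equation. The only real difference is that you make explicit the local valuation argument at primes $p \mid d$ establishing surjectivity/integrality of the bijection, the optimality of $\phi'$, and the well-definedness of $[\phi] \mapsto [Ad_{\mu^{-1}}\circ\phi]$ on $Opt/\OBxone$ — points the paper simply asserts — so this is a correct proof by essentially the paper's method, with some omitted checks filled in.
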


After summing over $\OBxone$-classes of optimal embeddings, the $\hat{\mathbf f}$-component of the unitary generating series becomes (for $w = \xi + i \eta \in \lie{H}$)
\begin{equation}
\la \widehat\Phi{}^u (w), \widehat{\mathbf f }\ra \ = \ \frac{1}{2 h(k) }  \sum_{\substack{m \in \Z \\ m \neq 0}}  \ \sum_{[\phi]}  \ \left( \int_{\CB(\C^{\sigma_0})} f^{\sigma_0} \cdot  \omega^{\sigma_0} \left( {\widehat\Zed(m,\phi, \eta)} \right) \right) q_w^m .
\end{equation}
The main theorem of this section equates the Shimura lift of the $f$-component of $\widehat\Phi^o$ with that of $\widehat\Phi^u$. In fact, we shall prove something a priori slightly stronger, namely that the $\widehat{\mathbf f}$-components of these two generating series arise by integrating $f^{\sigma_0}$ against certain theta functions, and that these theta functions are related by the Shimura lift. 

Let 
\begin{equation} \label{ThetaOEqn}
 \varTheta^o(\tau,z)\  :=  \ \sum_{\substack{ n \in \Z \\ n \neq 0}}  \ \psi^o(n,v)(z) \ q_{\tau}^n , \qquad \tau = u + i v
\end{equation}
and 
\begin{equation*} 
 \varTheta^u(w, z) \ := \ \frac{1}{2 h(k)|o_k^{\times}|} \ \sum_{\substack{m \in \Z \\ m \neq 0 }} \ \sum_{[\phi]} \psi^u(m,  \phi, \eta)(z) \ q_w^m, \qquad w = \xi + i \eta.
\end{equation*}
The function $\varTheta^o(\tau, z)$ can be shown to be a theta kernel arising via the Weil representation for the dual pair $(O(1,2), SL_2)$, cf.\ \cite[(4.4.5)]{KRYbook}. In particular, it is a modular form in the $\tau$-variable of weight 3/2. Though we will not pause to do so here, it can be also be shown that $\varTheta^u$ arises via restriction of a theta kernel attached to $(O(2,2), SL_2)$ of weight 2. 

Since the sets $\Omega^{\pm}( m, \lie{a}, \phi)$ appearing in \refProp{cpxUnifProp} are empty unless $|\Delta| $ divides $m$, 
\begin{align} 
 \varTheta^u(w, z) \ =& \ \frac{1}{2 h(k)|o_k^{\times}|} \ \sum_{\substack{m \in \Z \\ m \neq 0 }} \ \sum_{[\phi]} \psi^u(|\Delta|m,  \phi, \eta)(z) \ q_w^{|\Delta|m}  \notag \\
 =& \ \frac{1}{2 h(k)|o_k^{\times}|} \  \sum_{\substack{m \in \Z \\ m \neq 0 }} \left( \sum_{[\phi]} \psi^u(|\Delta|m,  \phi, \eta)(z)\ e^{-2 \pi |\Delta| m \eta} \right) \ e^{2 \pi i |\Delta| m \xi} \label{ThetaUEqn}.
 \end{align}
As the sums defining $\varTheta^o$ and $\varTheta^u$ are absolutely convergent, we have
\begin{equation*}
\la \widehat\Phi^o(\tau), \hat{\mathbf f} \ra  = \int_{\CB(\C^{\sigma_0})} f^{\sigma_0}(z) \cdot \varTheta^o(\tau, z) c_1(\widehat\omega)
\end{equation*}
and
\begin{equation*}
\la \widehat\Phi^u(w) , \hat{\mathbf f} \ra = \int_{\CB(\C^{\sigma_0})} f^{\sigma_0}(z) \cdot \varTheta^u(w, z) c_1(\widehat\omega).
\end{equation*}

\begin{theorem} \label{anCompThetaThm} Suppose $\Delta<0$ is squarefree and even, and every prime dividing $D_B$ is inert in $k$. Then
\[ Sh_{|\Delta|} \varTheta^o(\cdot, z) (w) \ = \ \varTheta^u(w,z) \]
\begin{proof}
As $\varTheta^o(\tau, z)$ is a non-holomorphic form in the $\tau$ variable (of weight 3/2, level $\Gamma_0(4 D_B)$, and trivial character), its Shimura lift is given by integrating against the Niwa-Shintani theta kernel $\Theta^{\#}_{NS}$  -- in the notation of \refSec{shLiftSec}, we have $N =  D_B, \lambda = 1, t = |\Delta|$ and $\chi_t = \chi'$:
\begin{align*}
Sh_{|\Delta|} \varTheta^o(\cdot, z)(w) \ =& \ C \cdot \int_{\Gamma_0( 4 D_B |\Delta| ) \backslash \lie{H} } v^{3/2} \  \varTheta^o_{|\Delta|}(\tau, z) \ \overline{ \Theta^{\#}_{NS}(\tau, w)} \ d  \mu(\tau) 
\end{align*} 
where $C = -2(|\Delta|D_B)^{-3/4}$ and the measure $d \mu(\tau) = du \cdot dv / v^2$ is the standard hyperbolic measure on $\lie{H}$.
 Substituting the Poincar\'e series expansion for $\Theta^{\#}_{NS}$ of  \refProp{thetaPoincare} (formally for now, we will justify this step momentarily), and applying the usual `unfolding' trick, we have 
 \begin{align}
Sh_{|\Delta|} \varTheta^o(\cdot, z)(w) \ =& \   \frac{1}{8} \int_{ \Gamma_{\infty} \backslash \lie{H}} v^{3/2} \ \varTheta^o(|\Delta| \tau, z)  \Bigg[ \sum_{m \in \Z} \chi'(m) \sum_{\mu = 0}^1 4^{\mu} \left( \frac{\eta m}{v} \right)^{1-\mu} \notag \\
& \qquad \times \exp \left( - \frac{\pi \eta^2 m^2}{4v} \right) \overline{\theta_{\mu}(\tau, - \xi m /2)} \Bigg]  d\mu(\tau) \notag \\
=& \   \frac{1}{8}  \int_0^{\infty}   v^{-1/2} \ \exp \left( - \frac{\pi \eta^2 m^2}{4v} \right)  \Bigg[ \sum_{m \in \Z} \chi'(m) \sum_{\mu = 0}^1 4^{\mu} \left( \frac{\eta m}{v} \right)^{1-\mu} \Bigg]  \notag \\ 
& \qquad \times \left(   \int_{0}^1 \varTheta^o(|\Delta| \tau, z) \ \overline{\theta_{\mu}(\tau, - \xi m /2)} \ du \right) \ dv  \label{ShAnalEqn1}
\end{align}
 where 
\[ \theta_{\mu}(\tau, - \xi m/2) = \sum_{\ell \in \Z} \ell^{\mu} e^{2 \pi i \left( \tau \ell^2 - \xi \ell m \right)} \]
for $\mu = 0,1$. A straightforward estimate reveals that \eqref{ShAnalEqn1} is absolutely convergent, and so in particular our interchanges of summations and integrals is justified. 

Computing first the intergral on $u$, we obtain
\begin{align*}
 \int_0^1 \varTheta^o(|\Delta| & (u + i v), z) \  \overline{\theta_{\mu}  (u + iv, - \xi m /2)} du  \\
 =& \sum_{\ell \in \Z} \ \ell^{\mu} \ e^{-2 \pi v \ell^2 + 2 \pi i \xi \ell m} \ \int_0^1 \varTheta^o(|\Delta| (u + i v), z) \ e^{-2 \pi i \ell^2 u} \ du \\
  =& \sum_{\ell \in \Z} \ \ell^{\mu} \ e^{-4 \pi v \ell^2 + 2 \pi i \xi \ell m} \  \begin{cases} \psi^o(|\Delta|n, |\Delta|v)(z), & \text{if } \ell^2 = |\Delta| n \text{ for some } n \neq 0 \\ 0, & \text{otherwise}. \end{cases}
\end{align*}
Since $|\Delta|$ is squarefree, only those $\ell$ with $|\Delta|$ dividing $\ell$ will contribute to the previous display. Thus
\begin{align*} 
Sh_{|\Delta|} \varTheta^o(\cdot, z)(w) = \frac{1}{8} \int_0^{\infty} v^{3/2}  \sum_{m \in \Z}  \chi'(m) \sum_{\mu = 0}^1 4^{\mu} \left( \frac{\eta m}{v} \right)^{1-\mu}  \exp \left( - \frac{\pi \eta^2 m^2}{4v} \right)  \\
\times \sum_{\substack{\ell \in \Z\\ \ell \neq 0}} \ (|\Delta|\ell)^{\mu} \ e^{-4 \pi v |\Delta|^2 \ell^2 + 2 \pi i \xi |\Delta|\ell m}  \  \psi^o(|\Delta|\ell^2, |\Delta|v)(z)  \ \frac{dv}{v^2}.
\end{align*}
Exchanging the summations with the integral, we obtain
\begin{align}
Sh_{|\Delta|} \varTheta^o(\cdot, z)&(w) \ = \ \frac{1}{8} \sum_{m,\ell \in \Z}  \chi'(m)  \int_0^{\infty} v^{-1/2} \Big[  \sum_{\mu = 0}^1 (|\Delta|\ell)^{\mu} 4^{\mu} \left( \frac{\eta m}{v} \right)^{1-\mu}  \Big] \notag \\
&\times  \exp \left( - \frac{\pi \eta^2 m^2}{4v}  -4 \pi v |\Delta|^2 \ell^2 \right) \  \psi^o(|\Delta|\ell^2, |\Delta|v)(z)  \ dv \ \cdot e^{2 \pi i m \ell |\Delta | \xi}  \notag\\
 = \ \frac{1}{4}  & \sum_{\substack{\ell \in \Z \\ \ell \neq 0}} \ \sum_{0< m \mid  |\ell|} \chi'(m)   \int_0^{\infty} v^{-1/2} \Big[  \sum_{\mu = 0}^1 \left(\frac{|\Delta|\ell}{m}\right)^{\mu} 4^{\mu} \left( \frac{\eta m}{v} \right)^{1-\mu}  \Big] \notag \\
&\times  \exp \left( - \frac{\pi \eta^2 m^2}{4v}  - \frac{4 \pi v |\Delta|^2 \ell^2}{m^2}  \right) \  \psi^o(|\Delta|\ell^2/m^2, |\Delta|v)(z)  \ dv \ \cdot e^{2 \pi i \ell |\Delta | \xi} \notag \\
= \   & \sum_{\substack{\ell \in \Z \\ \ell \neq 0}} \ \left( \sum_{0< m \mid  |\ell|} \chi'(m)  \cdot  I^o(\ell, m, \eta)(z) \right) \cdot e^{2 \pi i \ell |\Delta| \xi},  \label{ShLiftAnalFCoeffEqn}
\end{align}
where 
\begin{align*} I^o(\ell, m, \eta)(z) \ :=&  \ \frac{1}{4}  \int_0^{\infty} v^{-1/2} \Big[  \sum_{\mu = 0}^1 \left(\frac{|\Delta|\ell}{m}\right)^{\mu} 4^{\mu} \left( \frac{\eta m}{v} \right)^{1-\mu}  \Big] \\
& \qquad  \times  \exp \left( - \frac{\pi \eta^2 m^2}{4v}  - \frac{4 \pi v |\Delta|^2 \ell^2}{m^2}  \right) \  \psi^o(|\Delta|\ell^2/m^2, |\Delta|v)(z)  \ dv  \\
=& \ \frac{1}{4} \sum_{x \in \Omega^o(|\Delta| \ell^2 / m^2) }  \int_0^{\infty} v^{-1/2}  \Big[  \sum_{\mu = 0}^1 \left(\frac{|\Delta|\ell}{m}\right)^{\mu} 4^{\mu} \left( \frac{\eta m}{v} \right)^{1-\mu}  \Big] \\ 
&  \qquad \times \exp \left( - \frac{\pi \eta^2 m^2}{4v}  - 2 \pi|\Delta| v \left( R^o(x, z) + 2 Nrd(x) \right) \right) \\
&  \qquad \  \times \left\{ 4 \pi |\Delta| v [ R^o(x, z) + 2 Nrd(x) ] - 1 \right\} dv.
\end{align*}
Now for each $x$, this integral is a sum of Bessel $K$-functions of half-integral weight, which can be evaluated explicitly with the aid of the tables of Gradshteyn and Ryzhik, cf.\ \cite[(8.342.(vi)) and (8.468)]{GR}. In particular, for $a,b \in \R_{>0}$, we have the following relations:
\begin{equation}
\label{bessK-3/2} \int_0^{\infty} v^{-3/2} e^{ - \frac{\pi}{2} \left( av + b/v \right)} dv = \sqrt{ \frac{2}{b}} e^{- \pi \sqrt{ab} }
\end{equation}
\begin{equation}
\label{bessK-1/2} \int_0^{\infty} v^{-1/2} e^{ - \frac{\pi}{2} \left( av + b/v \right)} dv = \sqrt{ \frac{2}{a}} e^{- \pi \sqrt{ab} } ,
\end{equation}
\begin{equation}
\label{bessK+1/2}\int_0^{\infty} v^{1/2} e^{ - \frac{\pi}{2} \left( av + b/v \right)} dv =  \frac{\sqrt{2} ( 1+ \pi \sqrt{ab} )} {\pi a^{3/2}} e^{- \pi \sqrt{ab}}.
\end{equation}
After applying these formulae, we arrive at the expression
\begin{align*}
 I^o(m, \ell, \eta)(z) &= \frac12 \sum_{x \in \Omega^o(|\Delta|\ell^2/m^2)} \left( \pi \eta m \left( \sqrt{2|\Delta|} \left\{ R^o(x,z) + 2 Nrd(x) \right\}^{1/2} + 2 |\Delta| \frac{\ell}{m} \right) - 1 \right) \\
 & \qquad \times \exp \left( - \sqrt{2 |\Delta| } \pi  m  \left\{ R^o(x, z) + 2 Nrd(x) \right\}^{1/2} \eta \right).
 \end{align*}
Replacing $x$ by $m\cdot x$ in the preceding sum then yields
\begin{align*}
 I^o(m, \ell, \eta)(z) &=  \sum_{\substack{ x \in \Omega^o(|\Delta|\ell^2) \\ c(x)  \mid (\ell / m)}} \frac12 \left( \pi \eta  \left( \sqrt{2|\Delta|} \left\{ R^o(x,z) + 2 Nrd(x) \right\}^{1/2} + 2 |\Delta| \ell\right) - 1 \right) \\
 & \qquad \times \exp \left( - \sqrt{2 |\Delta| } \pi   \left\{ R^o(x, z) + 2 Nrd(x) \right\}^{1/2} \eta \right) \\
 & =:  \  \sum_{\substack{ x \in \Omega^o(|\Delta|\ell^2) \\ c(x)  \mid (\ell / m)}} I^o(\ell, x, \eta)(z).
 \end{align*}
Now, employing  \refLemma{rhoLemma}, we have
\begin{align} 
\sum_{m | |\ell|} \chi'(m) I^o( \ell, m, \eta)(z) \ =& \ \sum_{m | |\ell|} \chi'(m)  \sum_{\substack{ x \in \Omega^o(|\Delta|\ell^2) \\ c(x)  \mid (\ell / m)}} I^o(\ell, x,\eta)(z)  \notag \\
=& \ \sum_{c| |\ell|} \left(  \sum_{m | |\ell|/c} \chi'(m) \right)\sum_{\substack{ x \in \Omega^o(|\Delta|\ell^2) \\ c(x) = c}} I^o(\ell, x,\eta)(z)\notag \\
%=&  \ \sum_{c| |\ell|} \ \sum_{\nu | D_B} \rho \left( \frac{ |\ell|}{\nu c} \right) \sum_{\substack{ x \in \Omega^o(|\Delta|\ell^2) \\ c(x) = c}} I^o(\ell, x)(z)\notag \\
=& \sum_{c| |\ell|} \left( \sum_{\nu | D_B} \rho \left( \frac{ |\ell|}{\nu c} \right) \right)\sum_{\substack{ x \in \Omega^o(|\Delta|\ell^2) \\ c(x) = c}} I^o(\ell, x,\eta)(z)\notag \\
=& \ \frac{1}{2 h(k)|o_k^{\times}|} \sum_{[\phi]} \sum_{c| |\ell|} \ \sum_{\nu | D_B} |o_k^{\times}|\rho \left( \frac{ |\Delta\ell|}{\ |\Delta| \nu c} \right) \sum_{\substack{ x \in \Omega^o(|\Delta|\ell^2) \\ c(x) = c \\ \nu(x, \phi) = \nu}} I^o( \ell, x,\eta)(z). \label{shAnalCoeffEqn}
\end{align}
Note that for each $x$ occuring in the above sum,  the term $|o_k^{\times}|\rho( \cdot )$ that precedes it counts the number of elements $y \in \coprod_{[\lie{a}]} \Omega^{\pm}(|\Delta|\ell, \lie{a}, \phi)$ such that 
\[ x = |\ell| \cdot y^{-1} \phi(\sqrt{\Delta} ) y , \]
as in  \refProp{OmegasCompProp}.

%
%Turning to the unitary theta series, we write its Fourier expansion as
%\[ \Theta^u(w, z) \ = \ \sum_{\substack{\ell\in \Z\\ \ell \neq 0}} \ I^u(|\Delta|\ell,\eta)(z) \  e^{2 \pi i \xi |\Delta| \ell }, \]
%where
%\[ I^u(|\Delta|\ell, \eta)(z) \ := \ \frac{1}{2 h(k)} \sum_{[\phi]} \ \psi^u(|\Delta|\ell, \phi, \eta)(z) \ e^{-2 \pi |\Delta| \ell \eta}. \]
%and $\psi^u$ is given in \eqref{psiUDefEqn}. 

The following lemma, whose  \hyperref[majProofLemma]{proof}  appears at the end of this section, provides the key link between the majorants associated to $x$ and $y$ in this situation:
\begin{leftbar}[0.85\linewidth]
\begin{lemma} \label{majorantLemma}
Suppose $ y \in B_{\R}^{\times} $ and 
\[ x = t \cdot y^{-1} \phi(\sqrt{\Delta}) y\]
for some $t \in \R^{\times}$, and embedding $\phi \colon k \to B$. Then for any $z \in \lie{H}$, we have
\[ R^o(x, z) \ + \ 2 Nrd(x) \ = \ \frac{2 t^2}{|\Delta| Nrd(y)^2} \ \left( R_{\phi}(y, z) \ + \ \Delta Nrd(y) \right)^2. \]\qed
\end{lemma} 
%Suppose $y \in \Omega^+(|\ell|, \lie{a}, \phi)$, for a fractional ideal $\lie{a}$ and an embedding $\phi$. Set
%\[ x \ := \ \ell \cdot y^{-1} \phi(\sqrt{\Delta} ) y \ \in \ \Omega^o(|\Delta| \ell^2)^{\phi-\text{neg}} \]
\end{leftbar}
\noindent We first consider the case $\ell >0$. Choose $y \in \Omega^+(\ell |\Delta|, \lie{a}, \phi)$, so that  
\[ Nrd(y) = \frac{|\Delta| \ell}{\Delta N(\lie{a})}  = - \frac{\ell}{N(\lie{a})}. \]
If we set $x  = \ell \cdot  y^{-1} \phi(\sqrt{\Delta}) y \in \Omega^o(|\Delta|\ell^2)^{\phi-\text{pos}}$, then applying the lemma, we have 
\begin{align*}
 I^o(\ell, x)(z) =& \frac12 \left( \pi \eta  \left( \sqrt{2|\Delta|} \left\{ \frac{\sqrt{2} N(\lie{a}) }{|\Delta|^{1/2}} \cdot \left( R_{\phi}(y,z) + \Delta Nrd(y) \right) \right\} + 2 N(\lie{a})\Delta Nrd(y)\right) - 1 \right) \\
 & \qquad \times \exp \left( - \sqrt{2 |\Delta| } \pi   \left\{ \frac{\sqrt{2} N(\lie{a}) }{|\Delta|^{1/2}} \cdot \left( R_{\phi}(y,z) + \Delta Nrd(y) \right) \right\}  \eta \right) \\
 =&\ \frac12 \  \Big\{ 2 \pi N(\lie{a}) \eta  \left[ R_{\phi}(y,z) +2 \Delta Nrd(y) \right] - 1 \Big\}  \  e^{\ - 2 \pi N(\lie{a}) \eta  \left[ R_{\phi}(y,z) + \Delta Nrd(y) \right ]}.
\end{align*}
Similarly, if we choose $y \in \Omega^-(\ell |\Delta|, \lie{a}, \phi)$, and set $x = \ell \cdot y^{-1} \phi(\sqrt{\Delta}) y  \in \Omega^o(|\Delta|\ell^2)^{\phi-\text{neg}}$, 
then we obtain
\begin{align*}
 I^o(\ell, x)(z) =& \frac12 \left( \pi \eta  \left( \sqrt{2|\Delta|} \left\{ \frac{\sqrt{2} N(\lie{a}) }{|\Delta|^{1/2}} \cdot \left( R_{\phi}(y,z) + \Delta Nrd(y) \right) \right\} - 2 N(\lie{a})\Delta Nrd(y)\right) - 1 \right) \\
 & \qquad \times \exp \left( - \sqrt{2 |\Delta| } \pi   \left\{ \frac{\sqrt{2} N(\lie{a}) }{|\Delta|^{1/2}} \cdot \left( R_{\phi}(y,z) + \Delta Nrd(y) \right) \right\}  \eta \right) \\
 =&\ \frac12 \  \Big\{ 2 \pi N(\lie{a}) \eta R_{\phi}(y,z)  - 1 \Big\}  \  e^{\ - 2 \pi N(\lie{a}) \eta  \left[ R_{\phi}(y,z) + \Delta Nrd(y) \right ]}.
\end{align*}

Returning now to \eqref{shAnalCoeffEqn}, we may replace the sum on $x$ by the sum over elements $y \in \Omega^{\pm}( \dots) $ to obtain (for $\ell > 0$)
\begin{align*}
\sum_{m | |\ell|}  & \chi'(m) I^o( \ell, m)(z) \ = \ \frac{1}{2 h(k) |o_k^{\times}|} \sum_{[\lie{a}]} \sum_{[\phi]}  \\
&\times \Bigg\{  \sum_{y \in \Omega^+(|\Delta| \ell, \lie{a}, \phi) } \frac12 \  \Big\{ 2 \pi N(\lie{a}) \eta  \left[ R_{\phi}(y,z) +2 \Delta Nrd(y) \right] - 1 \Big\}  \  e^{\ - 2 \pi N(\lie{a}) \eta  \left[ R_{\phi}(y,z) + \Delta Nrd(y) \right ]} \\
& \qquad + \sum_{y \in \Omega^-(|\Delta| \ell, \lie{a}, \phi) } \frac12 \  \Big\{ 2 \pi N(\lie{a}) \eta R_{\phi}(y,z)  - 1 \Big\}  \  e^{\ - 2 \pi N(\lie{a}) \eta  \left[ R_{\phi}(y,z) + \Delta Nrd(y) \right ]} \Bigg\} \\
& \qquad \qquad \qquad =  \ \frac{1}{2 h(k) |o_k^{\times}|} \ \psi^u(|\Delta|\ell, \eta)(z)  \ e^{-2 \pi |\Delta| \ell \eta}.
\end{align*}
Hence, comparing \eqref{ShLiftAnalFCoeffEqn} and \eqref{ThetaUEqn}, we see that if $\ell>0$, then the $|\Delta|\ell$'th Fourier coefficients of $Sh_{|\Delta|}\varTheta^o$ and $\varTheta^u$ are equal. A nearly identical calculation reveals that the same holds true when $\ell <0$, concluding the proof of the theorem. 
\end{proof}
\end{theorem}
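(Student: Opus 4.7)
The plan is to compute the Fourier expansion of $Sh_{|\Delta|}\varTheta^o(\cdot,z)(w)$ in $w$ and match it coefficient by coefficient against the Fourier expansion of $\varTheta^u(w,z)$ given in \eqref{ThetaUEqn}. The strategy parallels Shintani-Niwa style arguments, combined with the geometric matching of orthogonal and unitary indexing sets that has been set up earlier in the paper.

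First I would write $Sh_{|\Delta|}\varTheta^o(\cdot,z)(w)$ as the integral of $v^{3/2}\varTheta^o(|\Delta|\tau,z)$ against $\overline{\Theta^{\#}_{NS}(\tau,w)}$ over $\Gamma_0(4D_B|\Delta|) \backslash \lie{H}$, then substitute the Poincar\'e series expansion of $\Theta^{\#}_{NS}$ from \refProp{thetaPoincare} with $N = D_B$, $\lambda = 1$, $t = |\Delta|$, $\chi_t = \chi'$. Standard unfolding collapses the quotient to $\Gamma_{\infty}\backslash \lie{H}$, and the inner $u$-integral extracts the Fourier coefficients of $\varTheta^o(|\Delta|\tau,z)$ paired with the elementary theta functions $\theta_{\mu}(\tau,-\xi m/2)$ for $\mu = 0,1$. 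The squarefreeness of $|\Delta|$ forces only integers of the form $\ell^2 = |\Delta|n$ to contribute, so the surviving sum is indexed by nonzero $\ell \in \Z$ and divisors $m \mid |\ell|$.

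Next I would evaluate the remaining $v$-integral. Expanding $\psi^o(|\Delta|\ell^2/m^2, |\Delta|v)(z)$ as a sum over $x \in \Omega^o(|\Delta|\ell^2/m^2)$, each term becomes a linear combination of Bessel $K$-integrals of weights $-3/2$, $-1/2$, and $1/2$, which are evaluated in closed form using the formulas of Gradshteyn-Ryzhik cited in the text. Reindexing $x \mapsto m \cdot x$ produces a Fourier coefficient at $e^{2\pi i \ell |\Delta|\xi}$ of the shape
\[ \sum_{m \mid |\ell|} \chi'(m) \sum_{\substack{x \in \Omega^o(|\Delta|\ell^2) \\ c(x) \mid (|\ell|/m)}} I^o(\ell, x, \eta)(z), \]
where $I^o$ depends on $x$ only through the orthogonal majorant $R^o(x,z) + 2Nrd(x)$ and an exponential involving $\sqrt{R^o(x,z)+2Nrd(x)}$.

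The final step is to match this sum with the corresponding Fourier coefficient of $\varTheta^u$ obtained from \eqref{ThetaUEqn}, which is the expression $\psi^u(|\Delta|\ell,\eta)(z)\, e^{-2\pi|\Delta|\ell\eta}/(2h(k)|o_k^{\times}|)$. This rests on three inputs: \refLemma{majorantLemma}, which under $x = \ell \cdot y^{-1}\phi(\sqrt{\Delta})y$ converts the square root $\sqrt{R^o(x,z)+2Nrd(x)}$ into the linear quantity $R_{\phi}(y,z) + \Delta Nrd(y)$ up to an explicit normalization; \refProp{OmegasCompProp}, which reindexes $\Omega^o(|\Delta|\ell^2)$ stratified by conductor $c(x)$ and Frobenius type $\nu(x,\phi)$ in terms of elements $y \in \Omega^{\pm}(|\Delta|\ell, \lie{a}, \phi)$ with multiplicity $|o_k^{\times}|\rho(|\ell|/(c\nu))$; and \refLemma{rhoLemma}, which turns the character sum $\sum_{m \mid |\ell|/c}\chi'(m)$ into $\sum_{\nu \mid D_B}\rho(|\ell|/(\nu c))$. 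The sign of $Nrd(y)$ distinguishes $\Omega^+$ from $\Omega^-$, matching the $\phi$-positive and $\phi$-negative parts of $\Omega^o(|\Delta|\ell^2)$ precisely; the case $\ell < 0$ is symmetric.

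The main obstacle I anticipate is the majorant identity of \refLemma{majorantLemma}: it is the algebraic heart of the comparison, and it is what allows the half-integral Bessel $K$-transforms on the orthogonal side to collapse into the integer-weight exponentials appearing in $\psi^u$. A secondary but unavoidable difficulty is the bookkeeping of signs and normalizing constants, particularly ensuring that the factor $C(\lambda)(-1)^{\lambda}2^{\cdots}(D_B|\Delta|)^{\cdots}$ from the definition of the Shimura lift combines correctly with the Bessel constants and with $1/(2h(k)|o_k^{\times}|)$; this is where a routine but careful computation is needed to finish the argument.
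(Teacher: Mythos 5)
Your proposal follows essentially the same route as the paper's proof: unfolding against the Poincar\'e expansion of $\Theta^{\#}_{NS}$ with $N=D_B$, $\lambda=1$, $t=|\Delta|$, extracting the $u$-Fourier coefficients (with squarefreeness forcing $|\Delta|\mid\ell$), evaluating the $v$-integral via the half-integral Bessel $K$ formulas, and then matching coefficients through exactly the three inputs the paper uses --- \refLemma{majorantLemma}, \refProp{OmegasCompProp}, and \refLemma{rhoLemma} --- with the same treatment of the $\Omega^{\pm}$ split and the $\ell<0$ case. The only piece you leave unestablished is the majorant identity itself, which the paper proves separately at the end of the section by verifying it at the special point $z_0$ with $J_{z_0}=-\phi(\mathbb I_k)$ and then invoking $SL_2(\R)$-equivariance of both majorants.
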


\begin{proof}[Proof of \refLemma{majorantLemma} ]  \phantomsection \label{majProofLemma}
We first  show that the lemma holds for a specific point $z_0 \in \lie{H}$. Given the embedding $\phi \colon k \to B$, we may write 
\[ B = \phi(k) \ + \ \theta \phi(k)\]
 for some element $\theta $ such that $\theta^{\iota} = - \theta$ and $\theta \phi(a) = \phi(a') \theta$ for all $a \in k$. Note that $\theta^2>0$ since $B$ is indefinite. If we write 
 \[ y = \phi(a) + \theta \phi(b) \]
   with $a, b \in k_{\R}$, then
    \begin{align*}
  x \ =& \ t \cdot y^{-1} \phi(\sqrt{\Delta}) y \\
   =& \ \frac{t}{Nrd(y)} \left( \phi(\sqrt{\Delta})(N(a) + \theta^2 N(b))  \  - \ 2 \theta \phi(a b \sqrt{\Delta}) \right).
\end{align*}
Let $z_0 \in \lie{H}$ denote the point such that $J_{z_0} = - \phi(\mathbb I_k)$, 
so that the orthogonal complement (with respect to the reduced norm form) is the negative plane
\[ J_{z_0}^{\perp} \ = \ \theta \phi(k_{\R}) \ \subset \ B^o_{\R}  = \{ v \in B_{\R} \ | \ Trd(b) = 0 \} .\]
Applying the definition of $R^o(x,z)$, cf. \eqref{RoDefEqn}, we have
\[ R^o(x, z_0) + 2 Nrd(x) \ = \ \frac{ 2 t^2|\Delta|}{Nrd(y)^2} \left( 4 N(a) N(b) \theta^2 + Nrd(y)^2 \right).\]
On the other hand, the negative $k_{\R}$-complex line $\zeta_0 \in \D(B^{\phi}_{\R})^-$ corresponding to $z_0$ is given by 
\[ \zeta_0 \ = \ \left\{ v \in B_{\R} \ | \ v J_{z_0}  =  - \phi(\mathbb I_k) v \right\} \ = \ \phi(k_{\R}), \]
so 
\[ R_{\phi}(y, z) + \Delta Nrd(y) = 2 |\Delta| N(a) + \Delta Nrd(y). \]
This immediately implies that
\[ R^o(x, z_0) \ + \ 2 Nrd(x) \ = \ \frac{2 t^2}{|\Delta| Nrd(y)^2} \ \left( R_{\phi}(y, z_0) \ + \ \Delta Nrd(y) \right)^2; \]
i.e. the lemma holds at $z_0$. 
Next, we observe that for any $\gamma \in B_{\R}^{\times, 1} \simeq SL_2(\R)$, we have
\[ R^o(x, \gamma \cdot z_0 ) \ = \ R^o( \gamma^{-1} x \gamma , z_0)  \qquad \text{and} \qquad R_{\phi}(y, \gamma \cdot z_0 ) = R_{\phi} (y \gamma, z_0),\]
and so by translation, the lemma holds for all $z \in \lie{H}$. 

\end{proof}

\section{Proof of main theorem: The Hodge component} \label{hodgeSec}
%In this section, we identify the Shimura lift of the Hodge component $\la\widehat\Phi_{/o_k}^o ,{\widehat\omega}\ra$ of the orthogonal generating series with the corresponding Hodge component $\widehat\Phi{}^u_{\widehat\omega}$ of the unitary generating series. 
%
We begin by recalling the following formula of Bost for the height pairing. Suppose 
\[ \hat{\mathcal L} \  =  \ (\mathcal L, ( ||\cdot||_{\sigma_i})_{i=0,1}) \ \in \ \widehat{Pic}(\CBok)\]
 is a metrized line bundle, and denote its image in $\CHR(\CBok)$ by the same symbol. Then the pairing against an arithmetic divisor $\widehat\Zed = (\Zed, Gr(\Zed)) \in \CHR(\CBok)) $can be expressed as 
\begin{align*}
 \la \widehat\Zed, \ \widehat\omega \ra \ =& \ h_{\widehat{\mathcal L}} (\Zed) \ + \ \frac12 \sum_{i=0}^1 \int_{\CB(\C^{\sigma_i})} Gr^{\sigma_i}(\Zed) \cdot c_1(\widehat\omega)^{\sigma_i} \\
 =& \  h_{\widehat{\mathcal L}} (\Zed)  \ + \ \int_{\CB(\C^{\sigma_0})} Gr^{\sigma_0}(\Zed) \cdot c_1(\widehat\omega)^{\sigma_0}
 \end{align*}
where $h_{\widehat{\mathcal L }}(\Zed)$ is the \emph{arithmetic height of $\widehat{\mathcal L}$ along $\Zed$}, as in \cite[Section 3.2.2]{Bost}; note that $h_{\widehat{\mathcal L }}(\Zed)$ depends only on the cycle $\Zed$, and not its Green function. Thus for $\tau = u + iv \in \lie{H}$,
\[ \la \widehat\Phi_{/o_k}^o (\tau),\widehat\omega \ra \  = \  \la \widehat\Zed{}^o(0,v), \widehat\omega \ra \ + \ \sum_{\substack{ n \in \Z \\ n \neq 0}} \ \big[ h_{\widehat\omega}( \Zed^o(n)_{/o_k}) + \kappa^o(n, v) \big] q_{\tau}^n, \]
where 
\[ \kappa^o(n,v) \ := \  \int_{\CB(\C^{\sigma_0})} Gr^{o, \sigma_0}(n, v)(z) \ c_1(\widehat\omega). \]

Similarly, for $w = \xi + i \eta \in \lie{H}$, the Hodge component of the unitary generating series is 
\begin{align}
\la \widehat\Phi{}^u(w), \widehat\omega \ra \ =& \ \la\widehat\Zed(0, \eta), \widehat\omega \ra \ + \frac{1}{4h(k)}\sum_{\ell \neq 0} \sum_{[\phi]} \Big\la  \widehat\Zed(m, \phi, \eta) +\widehat\Zed{}^*(m, \phi, \eta) , \ \widehat\omega \Big\ra  \ q_w^{\ell} \notag \\
 =& \ \la\widehat\Zed(0, \eta), \widehat\omega \ra+ \sum_{\ell \neq 0} (H^u(\ell,\eta) \ + \ K^u( \ell , \eta))  \ e^{2 \pi i \ell \xi}, \label{uHodgeFExpEqn}
 \end{align}
where 
\[ H^u(\ell,\eta) \ := \frac{1}{4h(k)} \sum_{\substack{[\phi] \in Opt \\ \text{mod } \OBxone}} \ \left[ h_{\widehat\omega} \Zed(\ell, \phi)  + h_{\widehat\omega} \Zed^*(\ell, \phi) \right] \ e^{-2 \pi \ell \eta} \]
and (using  \refLemma{genFibreUnitCorLemma}) 
\begin{align}
K^u(\ell, \eta) \ :=& \ \frac{1}{4h(k)} \Big( \int_{\CB(\C^{\sigma_0})} \sum_{\substack{[\phi] \in Opt \\ \text{mod } \OBxone}}  \left( Gr^{u, \sigma_0}(\ell, \phi, \eta) + Gr^{u*,\sigma_0}(\ell, \phi, \eta) \right) c_1(\widehat\omega) \Big) e^{-2 \pi \ell \eta} \notag \\
=&  \ \frac{1}{2h(k)} \Big( \int_{\CB(\C^{\sigma_0})} \sum_{\substack{[\phi] \in Opt \\ \text{mod } \OBxone}}  Gr^{u, \sigma_0}(\ell, \phi, \eta) \  c_1(\widehat\omega) \Big) e^{-2 \pi \ell \eta}.  \label{KuDefEqn}
\end{align} 

\begin{theorem} Assume $|\Delta|$ is squarefree and even and every prime dividing $D_B$ is inert in $k$. Then we have an equality of $q$-expansions: 
\[ Sh_{|\Delta|} \ \la \widehat{\Phi}_{/o_k}^o(\cdot), \widehat\omega \ra  \ (w)  \ = \ \la \widehat\Phi{}^u(w), \ \widehat\omega \ra. \]

\begin{proof} The proof begins in the same manner as the proof of  \refThm{anCompThetaThm}: the generating series $\la \widehat\Phi_{/o_k}^o, \widehat\omega \ra$ is a non-holomorphic modular form of weight 3/2, level $4 D_B$, and trivial character, and so its Shimura lift is given by integrating against the Niwa-Shintani theta kernel $\theta^{\#}_{NS}$.
We wish to substitute the Poincar\'e series expansion for $\theta^{\#}_{NS}$, as in \refThm{thetaPoincare}, and apply the usual unfolding trick. Doing so formally for the moment, we have
\begin{align}
Sh_{|\Delta|} \widehat\Phi{}^o_{\widehat\omega}(w) \ =& \  \frac{1}{8}  \int_0^{\infty} \int_{0}^1  v^{-1/2} \ \widehat\Phi{}^o_{\widehat\omega}(|\Delta|\tau)  \Bigg[ \sum_{m \in \Z} \chi'(m) \sum_{\mu = 0}^1 4^{\mu} \left( \frac{\eta m}{v} \right)^{1-\mu} \notag \\ 
& \qquad \times \exp \left( - \frac{\pi \eta^2 m^2}{4v} \right) \overline{\theta_{\mu}(\tau, - \xi m /2)} \Bigg]  du \ dv  \label{HodgeUnfoldEqn}
\end{align}
where $C = -2(|\Delta|D_B)^{-3/4}$ , the measure $d \mu(\tau) = du \cdot dv / v^2$ is the standard hyperbolic measure on $\lie{H}$, and 
\[ \theta_{\mu}(\tau, - \xi m/2) = \sum_{\ell \in \Z} \ell^{\mu} e^{2 \pi i \left( \tau \ell^2 - \xi \ell m \right)}. \]
We have also used the abbreviation $\widehat{\Phi}{}^o_{\widehat\omega}(\tau) = \la \widehat\Phi_{/o_k}^o(\tau), \widehat\omega \ra$ for the Hodge component. 

The Fourier coefficients of $\widehat\Phi{}^o_{\widehat\omega}(\tau)$ were computed completely explicitly by Kudla, Rapoport and Yang, cf.\ \cite[ Theorem 8.8]{KRYcomp}. From their calculations, one can show that \eqref{HodgeUnfoldEqn} is absolutely convergent, and so the unfolding procedure above (as well as interchanges of summations and integrations appearing in the sequel) are justified.
Carrying out the integral on $u$, we obtain
\begin{align*}
 \int_0^1  \widehat\Phi{}^o_{\widehat\omega}( & |\Delta|(u + i v)) \  \overline{\theta_{\mu}  (u + iv, - \xi m /2)} du  \\
 =& \sum_{\ell \in \Z} \ \ell^{\mu} \ e^{-2 \pi v \ell^2 + 2 \pi i \xi \ell m} \ \int_0^1 \widehat{\Phi}{}^o_{\widehat\omega}(|\Delta| (u + i v)) \ e^{-2 \pi i \ell^2 u} \ du \\
  =& \sum_{\ell \in \Z} \ \ell^{\mu} \ e^{-4 \pi v \ell^2 + 2 \pi i \xi \ell m} \  
  		\begin{cases} \la \widehat\Zed{}^o(0,v), \widehat\omega \ra, & \text{if } \ell = 0, \\
		h_{\widehat\omega}(\Zed^o(|\Delta|n))+ \kappa^o(|\Delta|n, |\Delta|v) & \text{if } \ell^2 = |\Delta| n \text{ for some } n > 0 \\ 
		 0, & \text{otherwise}. \end{cases}
\end{align*}
Note that only terms with $|\Delta|$ dividing $\ell$ contribute above, and that the constant term only appears when $\mu = 0$. After an interchange of summation with integration, we obtain
\begin{equation} \label{oHodgeFExpEqn}
Sh_{|\Delta|} \widehat\Phi^o_{\widehat\omega}(w) = A^o( \eta)\ + \  \sum_{\substack{\ell \in \Z \\ \ell \neq 0}} \ \left( \sum_{0< m \mid  |\ell|} \chi'(m)  \cdot  \left( H^o(\ell, m,\eta) + K^o(\ell, m, \eta) \right) \right) \cdot e^{2 \pi i \ell |\Delta| \xi},
\end{equation}
where 
\[ A^o( \eta) \ := \ \frac{1}{8} \int_0^{\infty} v^{-1/2} \ \sum_{m \in \Z} \chi'(m) \ \exp \left( - \frac{  \pi \eta^2 m^2}{4v} \right)\ \left( \frac{\eta m}{v}  \right)\cdot  \la \widehat\Zed{}^o(0,v), \widehat\omega \ra \ dv, \]
 
\begin{align}
H^o(\ell, m, \eta) \ := & \frac{1}{4}  \int_0^{\infty} v^{-1/2} \Big[  \sum_{\mu = 0}^1 \left(\frac{|\Delta|\ell}{m} \right)^{\mu} 4^{\mu} \left( \frac{\eta m}{v} \right)^{1-\mu}  \Big] \notag \\
 & \  \times  \exp \left( - \frac{\pi \eta^2 m^2}{4v}  - \frac{4 \pi v |\Delta|^2 \ell^2}{m^2}  \right) h_{\widehat\omega}(\Zed^o(|\Delta|\ell^2/m^2)) \ dv, \label{HoDefEqn}
 \end{align}
 and
\begin{align} K^o(\ell, m, \eta)\ :=& \ \frac{1}{4}  \int_0^{\infty} v^{-1/2} \Big[  \sum_{\mu = 0}^1 \left(\frac{|\Delta|\ell}{m} \right)^{\mu} 4^{\mu} \left( \frac{\eta m}{v} \right)^{1-\mu}  \Big] \notag \\
& \  \times  \exp \left( - \frac{\pi \eta^2 m^2}{4v}  - \frac{4 \pi v |\Delta|^2 \ell^2}{m^2}  \right) \kappa^o(|\Delta|\ell^2/m^2, |\Delta|v) dv. \label{KoDefEqn}
\end{align}
To prove the theorem, we show that the Fourier expansions \eqref{uHodgeFExpEqn} and \eqref{oHodgeFExpEqn} coincide. We do this over the next three subsections, by equating first the contributions from the `height terms' (the $H$'s), then the `archimedean' terms (the $K$'s) and in the final subsection, the constant terms. 

%%%%%%%%
\subsection{Height contributions}
First, using the formulas \eqref{bessK-3/2} and \eqref{bessK-1/2} to calculate the integral on $v$ in \eqref{HoDefEqn}, we find
\begin{align*}
H^o(\ell, m,\eta) \ =& \  \frac{1}{4} \ h_{\widehat\omega}( \Zed^o(|\Delta| \ell^2 / m^2 ) \ \cdot \  2 \ ( 1 + sgn(\ell))\ e^{- 2 \pi |\Delta| |\ell| \eta}  \\
=& \ \begin{cases} h_{\widehat\omega}( \Zed^o(|\Delta| \ell^2 / m^2 ) \cdot e^{- 2 \pi |\Delta| |\ell| \eta} , & \text{if } \ell > 0 \\ 0, & \text{otherwise.} \end{cases}
\end{align*}
By \cite[Theorem 4.10]{San1}, we have that if $|\Delta | \nmid \ell$, then
\[\sum_{[\phi] \in Opt / \OBxone} \Zed(\ell, \phi) + \Zed^*(\ell, \phi) \ = \ 0, \]
and if $\ell = |\Delta| \ell'$, then
\begin{equation} \label{cyclesRelEqn}
 \frac{1}{4h(k)} \sum_{[\phi] \in Opt / \OBxone}  \Zed(\ell, \phi) + \Zed^*(\ell, \phi) \ = \  \sum_{\alpha|\ell'} \chi'(\alpha)  \Zed^o \left( |\Delta| \frac{(\ell')^2}{\alpha^2} \right)_{/o_k};
\end{equation}
note that the quantity on the right-hand-side is twice that appearing in \cite{San1}, since here we have taken the sum on optimal embeddings modulo $\OBxone$, instead of $\OBx$. 

Thus, if $\ell <0$ or $|\Delta| \nmid \ell$, then $H^u(\ell, \eta) = 0$. On the other hand, for any $\ell >0$, we have
\begin{align*}
H^u(\ell|\Delta|, \eta)  \ =& \  \frac{1}{4h(k)} \sum_{\substack{[\phi] \in Opt \\ \text{mod } \OBxone}} \ \left[ h_{\widehat\omega} \Zed(\ell|\Delta|, \phi)  + h_{\widehat\omega} \Zed^*(\ell|\Delta|, \phi) \right] \ e^{-2 \pi \ell |\Delta| \eta} \\
=& \   \sum_{\alpha|\ell} \chi'(\alpha) \  h_{\widehat\omega}  \Zed^o \left( |\Delta| \ell^2 / \alpha^2 \right) \cdot  e^{- 2 \pi \ell |\Delta|  \eta} \ = \ \sum_{\alpha|\ell} H^o(\ell, \alpha,\eta),
\end{align*}
as required. 
\subsection{Archimedean contributions} {\ \\}

In \cite[Proposition 12.1]{KRYcomp}, the quantity $\kappa^o(|\Delta|\ell^2 / m^2,|\Delta| v )$ appearing in \eqref{KoDefEqn} above is computed to be
\begin{align*} 
\kappa^o \left( |\Delta| \ell^2/m^2 ,|\Delta| v\right) &=  \sum_{\substack{ x \in \Omega^o(|\Delta|\ell^2/m^2) \\ mod \OBxone}} \frac{1}{|\Gamma_x|}  \cdot \int_0^{\infty} e^{-4 \pi |\Delta|^2 (\ell/m)^2 v s} \left( (s+1)^{1/2} - 1 \right) \frac{ds}{s} \\
&= 2 \sum_{\substack{ x \in \Omega^o(|\Delta|\ell^2/m^2) \\ mod \OBxone}} \frac{1}{|\Gamma_x|}  \cdot \int_1^{\infty} \exp \Big[ -4 \pi |\Delta|^2 (\ell/m)^2 v (s^2 -1) \Big] \frac{s }{s+1} ds,
\end{align*}
where $\Gamma_x = \{ \gamma \in \OBxone \ | \ \gamma x = x \gamma \}$. Note that the right hand side of this formula is twice the value appearing in \emph{loc.\ cit.} because  here we have taken the base change to $o_k$; we have also written our formula in terms of $\OBxone$-orbits, rather than $\OBx$. 

Substituting this expression into  \eqref{KoDefEqn} and changing the order of integration, we find
\[ K^o(\ell, m, \eta) \ = \ \frac12 \ \Big( \sum_{ \substack{ x \in \Omega^o(|\Delta|\ell^2/m^2) \\ \text{mod } \OBxone}} |\Gamma_x|^{-1} \Big)\ I^o(m, \ell, \eta),\]
where 
\begin{align*}
 I^o(m, \ell, \eta) :=& \ \int_1^{\infty} \ \int_0^{\infty} v^{-1/2}  \Big[  \sum_{\mu = 0}^1 \left(\frac{|\Delta|\ell}{m} \right)^{\mu} 4^{\mu} \left( \frac{\eta m}{v} \right)^{1-\mu}  \Big]  \\
 & \qquad \times \exp \left( - \frac{\pi\eta^2m^2}{4v} -  \frac{4 \pi v |\Delta|^2 \ell^2 s^2}{m^2}  \right)  dv \ \cdot \left( \frac{s}{s+1} \right) ds.
\end{align*}
The inner integral, which turns out to be independent of $m$, can be explicitly evaluated with the aid of \eqref{bessK-3/2} and \eqref{bessK-1/2}, yielding
\begin{equation*} 
 I^o(m, \ell, \eta) = I^o(\ell, \eta) \ = \int_1^{\infty} 2 \left( 1 + \frac{sgn(\ell)}{s} \right) e^{-2\pi |\Delta| \eta |\ell| s}  \frac{s}{s+1} ds.
\end{equation*}
From this, a straightforward calculation shows that 
\begin{equation} \label{IoEllPosEqn}
 I^o(\ell, \eta) = \frac{1}{\pi \ell |\Delta| \eta} \  e^{ - 2 \pi |\Delta| \ell \eta} \qquad \text{if } \ell > 0, 
\end{equation}
and 
\begin{equation} \label{IoEllNegEqn}
I^o(\ell, \eta) \ = \ \frac{1}{\pi | \ell\Delta | \eta} \ e^{ - 2 \pi | \ell\Delta | \eta}  \ - \ 4 e^{2 \pi |\Delta \ell| \eta} \ \int_1^{\infty} e^{-4 \pi |\ell \Delta| \eta s } \frac{ds}{s}, \qquad \text{ if } \ell < 0. \end{equation}
In summary, we have that the archimedean part of the $|\Delta|\ell$'th Fourier coefficient of $Sh_{|\Delta|} \Phi^o_{\widehat\omega}$ is 
\begin{align*}
\sum_{m | |\ell|} \chi'(m) K^o(\ell, m, \eta)  \ =& \ \frac12 \ \left( \sum_{m | |\ell|} \chi'(m) \sum_{ \substack{ x \in \Omega^o(|\Delta|\ell^2/m^2) \\ \text{mod } \OBxone}} |\Gamma_x|^{-1} \right) \ I^o(\ell, \eta).
\end{align*} 

We turn to the corresponding term  $K^u(\ell, \eta)$ for the unitary series, cf.\ \eqref{KuDefEqn}. If $|\Delta|$ does not divide $\ell$, then $\Omega^{\pm}(\ell, \lie{a}, \phi)=\emptyset$ and so $K^u(\ell, \eta) = 0$ in this case. On the other hand, since a generic point in $\lie{H}$ has a stabilizer of order 2 under the action of $\OBxone$, we have 
\[ \int_{\CB(\C^{\sigma_0}) }  \ = \ \int_{ [\OBxone \backslash \lie{H} ] } \ = \ \frac12 \int_{\OBxone \backslash \lie{H} },  \]
and so, applying the definitions for the Green functions,
\begin{align}
K^u(|\Delta| \ell, \eta) \ =&  \  \frac{e^{-2 \pi \ell |\Delta| \eta}}{ 4h(k) |o_k^{\times}|} \cdot \sum_{[\phi]} \   \int_{\OBxone \backslash \lie{H} }  \sum_{[\lie{a}]} \Big[  \sum_{y \in \Omega^+(|\Delta|\ell, \lie{a}, \phi)}  \beta_1\left( 2 \pi N(\lie{a}) \eta R_{\phi}(y, z) \right) \notag \\
& \qquad + \sum_{y \in \Omega^-(|\Delta|\ell, \lie{a}, \phi)}  \beta_1\big( 2 \pi N(\lie{a}) \eta (R_{\phi}(y, z) + 2 \Delta Nrd(y)) \big) \ \Big] \cdot c_1(\widehat\omega)   \notag\\
=& \ \frac{e^{-2 \pi \ell |\Delta| \eta}}{ 2 h(k) |o_k^{\times}|} \cdot \Bigg[ \sum_{[\phi]} \ \sum_{[\lie{a}]}    \sum_{\substack{ y \in \Omega^{+}(|\Delta|\ell, \lie{a}, \phi) \\ \text{mod } \OBxone} }  \int_{ \lie{H} }   \beta_1\left( 2 \pi N(\lie{a}) \eta R_{\phi}(y, z) \right)  \cdot c_1(\widehat\omega) \notag\\
& \qquad +  \sum_{\substack{ y \in \Omega^{-}(|\Delta|\ell, \lie{a}, \phi) \\ \text{mod } \OBxone} }   \int_{\lie{H}} \beta_1\big( 2 \pi N(\lie{a}) \eta (R_{\phi}(y, z) + 2 \Delta Nrd(y)) \big) \cdot c_1(\widehat\omega) \Bigg] \label{KuExpEqn}  \\
=&\ \frac{1}{ 2 h(k)|o_k^{\times}|} \cdot \sum_{[\phi]}  \sum_{[\lie{a}]}  \sum_{\substack{ y \in \Omega^{\pm}(|\Delta|\ell, \lie{a}, \phi) \\ \text{mod } \OBxone} } I_{\phi}(y, \eta), \notag
\end{align}
where
\[ I_{\phi}(y, \eta) \ = \ e^{-2 \pi \ell |\Delta|\eta} \int_{\lie{H}} \beta_1\Big( 2 \pi N(\lie{a}) \eta R_{\phi}(y, z)\Big) \cdot c_1(\widehat\omega), \qquad \text{ if } y \in \Omega^+(|\Delta|\ell, \lie{a}, \phi),  \]
and
\[ I_{\phi}(y, \eta) = e^{-2 \pi \ell |\Delta|\eta} \int_{\lie{H}} \beta_1 \Big( 2 \pi N(\lie{a}) \eta (R_{\phi}(y, z) + 2 \Delta Nrd(y)) \Big)  \cdot c_1(\widehat\omega), \qquad \text{ if } y \in \Omega^-(|\Delta|\ell, \lie{a}, \phi). \]
Note that a factor of 2 emerges in \eqref{KuExpEqn} between the first and second lines because $\{ \pm 1 \}$ acts trivially on $\lie{H}$ but non-trivially on $\Omega^{\pm}(\dots)$. 

Fix an embedding $\phi \colon o_k \to \OB$, and an element $y \in \Omega^+(|\Delta| \ell, \lie{a}, \phi)$. Recall that $\phi$ induces an isomorphism 
\[ \lie{H} \simeq \D(B^{\phi}_{\R})^- \] 
as in \eqref{altCpxUnifMapEqn}.
We may fix an orthogonal $k_{\R}$ basis $\{ e, f \}$ of $B^{\phi}_{R}$ such that $(e, e)_{\phi} = -(f, f)_{\phi}=1$, which yields an isomorphism 
\[ U^1  \ = \ \{ \lie{Z} \in \C \ | \ |\lie{Z}| < 1 \} \ \isomto \D(B^{\phi}_{\R})^-, \qquad \lie{Z} \mapsto \text{span}_{k_{\R}}( \lie{Z} e + f) , \]
such that with respect to the parameter $\lie{Z} = r e^{i \theta} $ on $U^1$, we have
\[ c_1(\widehat\omega) \ = \frac{2}{\pi} \ \frac{r \ dr \wedge d \theta}{(1 -r^2)^2}, \]
cf.\  \refLemma{hodgeUnifLemma}. 
Thus, we may write
\[ I_{\phi}(y, \eta) \ = \frac{2e^{-2 \pi \ell |\Delta|\eta}}{\pi} \int_{U^1} \beta_1\Big( 2 \pi N(\lie{a}) \eta \cdot R_{\phi}(y, r e^{i \theta})\Big) \  \frac{r \ dr \ d \theta}{(1 -r^2)^2}.  \]
Since $U(B_{\R})$ acts transitively on $\D(B^{\phi}_{\R})^- \simeq U^1$, and the measure appearing in the above display is invariant for this action, we may assume without loss of generality that 
\[ y \ = \ \left( \frac{ |\Delta \ell|}{N(\lie{a})} \right)^{1/2} \times \begin{cases} e, & \text{if } \ell > 0 \\ f, & \text{if } \ell < 0 \end{cases}. \]
Following definitions, we obtain
\[2 \pi \eta N(\lie{a}) \cdot R_{\phi}(y, r e^{i\theta}) \ = \ - \frac{4 \pi |\ell \Delta| \eta}{ 1 - r^2} \times \begin{cases} r^2, & \text{if } \ell > 0 \\ 1, & \text{if } \ell < 0. \end{cases}\]
Thus, when $\ell > 0$, we have
\begin{align*}
 I_{\phi}(y, \eta) \ =& \ \frac{2e^{-2 \pi \ell |\Delta|\eta}}{\pi} \int_{U^1} \ \int_1^{\infty} \exp\left( - \frac{ 4 \pi |\ell\Delta| r^2 s }{1 - r^2} \right) \frac{ds}{s} \ \frac{r \ dr \ d\theta}{(1-r^2)^2} \\
 =& \ 4e^{-2 \pi \ell |\Delta|\eta} \int_0^1 \int_1^{\infty} \exp \left( - \frac{4 \pi |\ell\Delta| \eta r^2 s}{ 1 - r^2} \right) \frac{ds}{s}\frac{r \ dr }{(1-r^2)^2}  \\
 =& \ 4 e^{-2 \pi \ell |\Delta|\eta}   \int_1^{\infty}  \exp \left( 4 \pi |\ell\Delta| \eta  s \right) \left[ \int_0^1  \exp \left( - \frac{4 \pi |\ell\Delta| \eta  s}{ 1 - r^2} \right) \frac{r \ dr }{(1-r^2)^2}  \right] \frac{ds}{s}  \\
 =& \ \frac{e^{-2 \pi \ell |\Delta|\eta} }{2\pi|\ell\Delta| \eta} \int_1^{\infty} \frac{ds}{s^2} \ = \ \frac{e^{-2 \pi \ell |\Delta|\eta} }{2\pi\ell |\Delta| \eta}.
 \end{align*}
 On the other hand, if $\ell < 0 $, we obtain
 \begin{align*}
  I_{\phi}(y, \eta) \ =& \frac{2 e^{-2 \pi \ell |\Delta|\eta} }{\pi} \int_{U^1} \ \int_1^{\infty} \exp\left( - \frac{ 4 \pi |\ell\Delta|\eta s }{1 - r^2} \right) \frac{ds}{s} \ \frac{r \ dr \ d\theta}{(1-r^2)^2} \\
  =& \ 4 \ e^{-2 \pi \ell |\Delta|\eta}   \int_1^{\infty} \frac{ds}{s}  \cdot \int_0^1  \exp\left( - \frac{ 4 \pi |\ell\Delta| \eta s }{1 - r^2} \right) \frac{r \ dr}{(1-r^2)^2} \\
  =& \ \frac{e^{-2 \pi \ell |\Delta|\eta}}{2\pi |\ell\Delta|\eta} \int_1^{\infty} e^{-4 \pi |\ell\Delta| \eta s} \ \frac{ds}{s^2} \\
  =& \ e^{-2 \pi \ell |\Delta|\eta} \cdot \left( \frac{1}{2\pi |\ell\Delta|\eta}  \ e^{-4 \pi |\ell\Delta| \eta}  \ - 2 \int_{1}^{\infty} e^{- 4 \pi |\ell\Delta| \eta s} \frac{ds}{s} \right)  \\
  =& \ \frac{1}{2\pi |\ell\Delta|\eta}  \ e^{-2 \pi |\ell\Delta| \eta}  \ - \ 2 e^{2 \pi |\ell\Delta| \eta} \int_{1}^{\infty} e^{- 4 \pi |\ell\Delta| \eta s} \frac{ds}{s}  
 \end{align*} 
Comparing with \eqref{IoEllPosEqn} and \eqref{IoEllNegEqn}, we find that for all $\ell \neq 0$ and $y \in \Omega^+(|\Delta|\ell, \lie{a}, \phi)$,
\[ I_{\phi}(y, \eta) = \frac12 \ I^o(\ell, \eta). \] 
A similar calculation shows that the preceding display holds true for $y \in \Omega^-(|\Delta|\ell, \lie{a}, \phi)$. 
Thus, 
\[ K^u(|\Delta|\ell, \eta) \ = \ \frac{1}{4 h(k) |o_k^{\times}|}  \cdot \left( \sum_{[\phi]} \sum_{[\lie{a}]} \sum_{\substack{ y \in \Omega^{\pm}(|\Delta| \ell, \lie{a}, \phi) \\ \text{mod } \OBxone} } 1 \right)\ \cdot \left( \frac12 I^o(\ell, \eta) \right) . \]
By  \refProp{OmegasCompProp} and passing to $\OBxone$-equivalence classes, we have
\begin{align*}
\frac{1}{2 h(k)|o_k^{\times}|}\ \sum_{[\phi]} \ \sum_{[\lie{a}]} \sum_{\substack{ y \in \Omega^{\pm}(|\Delta| \ell, \lie{a}, \phi) \\ \text{mod } \OBxone} } 1 \ =& \  \frac{1}{2 h(k)|o_k^{\times}|} \ \sum_{[\phi]} \ \sum_{ \substack{ x \in \Omega^o(|\Delta| \ell^2) \\ \text{mod } \OBxone}} \rho\left( \frac{|\ell|}{ c(x) \cdot \nu(x, \phi) } \right)  \\
=& \ \left( \sum_{c | |\ell|} \ \sum_{\nu | D_B} \rho\left( \frac{|\ell|}{ c \cdot \nu } \right)  \right) \cdot \Bigg( \frac{1}{2 h(k)} \ \sum_{[\phi]}  \sum_{ \substack{ x \in \Omega^o(|\Delta| \ell^2) \\ \text{mod } \OBxone \\ c(x) = c, \ \nu(x, \phi) = \nu}} \frac{1}{|o_k^{\times}|}  \Bigg) \\
=&  \ \sum_{m | |\ell|} \chi'(m) \ \sum_{ \substack{ x \in \Omega^o(|\Delta| \ell^2 / c^2) \\ \text{mod } \OBxone }} \frac{1}{|\Gamma_x|},
\end{align*}
where in the last line, we have used  \refLemma{rhoLemma}, together with the fact that $\Gamma_x = |o_k^{\times}|$ for the $x$'s appearing above. 
Therefore, we have 
\[ K^u(|\Delta|\ell, \eta) \ = \frac12 \Big( \sum_{m | |\ell|} \chi'(m) \sum_{ \substack{ x \in \Omega^o(|\Delta|\ell^2/m^2) \\ \text{mod } \OBxone}} \frac{1}{|\Gamma_x|} \Big) \ I^o(\ell, \eta) \ = \ \sum_{m||\ell|} \chi'(m) \  K^o(m, \ell, \eta), \]
as required. 

\subsection{The constant term}
Recall that we had computed the constant term of $Sh_{|\Delta|}\widehat\Phi{}^o_{\widehat\omega}$ to be 
\begin{align*}
A^o( \eta) \ :=&  \ \frac{1}{8} \int_0^{\infty} v^{-1/2} \ \sum_{m \in \Z} \chi'(m) \ \exp \left( - \frac{  \pi \eta^2 m^2}{4v} \right)\ \left( \frac{\eta m}{v}  \right) a^o(|\Delta|v) \ dv \\ 
=& \ \frac14 \int_0^{\infty} \sum_{m \in \Z} \chi'(m) \ \exp \left( - \frac{  \pi \eta^2 m^2v^2}{4} \right)\ \left( \eta m \right) a^o(|\Delta|/v^2) \ dv
\end{align*}
where 
\begin{align*}
 a^o(v) :=& \  \la \widehat{\Zed}{}^o(0,v)_{/o_k}, \widehat\omega \ra  =- \la \widehat\omega, \widehat\omega \ra   -   \deg \omega_k \left( \log v + \log D_B \right) 
 \end{align*}
is the constant term of $\widehat\Phi{}^o_{\widehat\omega}(|\Delta| \tau)$.

To help our formulas appear somewhat neater, let $D = D_B$ and $t = |\Delta|$.

Applying twisted Poisson summation to the inner sum on $m$, cf.\ \cite[(2.42)]{Cipra}, yields:
\begin{align*}
 \sum_{m \in \Z} \chi'(m) \ &m \ \exp \left( - \frac{\pi \eta^2 m^2 v^2}{4} \right)  \\
&= \frac{-i}{2D^2t^2} \cdot \frac{1}{\eta^3 v^3} \left[ \sum_{m \in \Z} \check{\chi}'(m) m \exp \left( - \frac{ \pi m^2}{4D^2 t^2 \eta^2 v^2} \right) \right],
 \end{align*}
 where 
 \[ \check{\chi}'(m) = \sum_{a =1}^{4Dt} \overline{\chi}'(a) \ e(am/4Dt). \]

Substituting and applying a change of variables yields
\begin{align*}
 A^o(\eta) &= \frac{-i}{8D^2t^2} \int_0^{\infty}  a^o( t/v^2) \cdot \frac{1}{\eta^2 v^3} \sum_{m \in \Z} \check{\chi}'(m) m \exp \left( \frac{- \pi m^2}{4D^2 t^2 \eta^2 v^2} \right) dv \\ 
&= \frac{-i}{2} \int_0^{\infty} \sum_{\substack{ m \in \Z \\ m > 0}}a^o( 4D^2 t^3 \eta^2 u / m^2 ) \cdot \check{\chi}'(m) \ m^{-1} \ \exp( - \pi u) \ du \\
&= \frac{i}{2} \ \int_0^{\infty} \sum_{m>0}  \check{\chi}'(m) \ m^{-1} \Big( \la \widehat\omega, \widehat\omega \ra  +  \deg (\omega_k) [ \log u + \log (4 D^3t^3 \eta^2) - 2 \log m) \Big) \ e^{- \pi u} du. 
\end{align*} 
Expanding, we obtain
\begin{align}
A^o(\eta) \ =& \ \frac{i}{2}  \Bigg[ \ L(1,  \check{\chi}')  \cdot \left(  \la \widehat\omega, \widehat\omega \ra  + \deg(\omega_k) \log 4 D^3 t^3 \eta^2 \right) \cdot \left( \int_{0}^{\infty} e^{-\pi u } \ du \right) \notag \\
& \ \ +  L(1,  \check{\chi}') \deg(\omega_k) \left( \int_{0}^{\infty} \log(u) e^{-\pi u } \ du \right)  \ - \ 2 L'(1, \check{\chi}') \cdot  \deg(\omega_k)\left( \int_{0}^{\infty} e^{-\pi u } \ du \right) \Bigg] \notag \\
=& \ \frac{i}{2\pi}  \Bigg[ \ L(1,  \check{\chi}')  \cdot \left(  \la \widehat\omega, \widehat\omega \ra  +  \deg(\omega_k)(\log 4 D^3 t^3 \eta^2 - \gamma - \log \pi) \right) - 2 L'(1, \check{\chi}') \cdot  \deg(\omega_k) \Bigg], \label{constTermHodge}
\end{align}
where we have used the integral representation
\[ \gamma = - \int_0^{\infty} \log (u) e^{-u} \ du. \]
Now comparing \eqref{constTermHodge} with  \refDef{uConstDef} for the constant term $\widehat\Zed(0, \eta)$, we immediately see that
\[ A^o(\eta) \ = \ \la \widehat\Zed(0,\eta) , \ \widehat\omega \ra, \]
as required. 
\end{proof}

\end{theorem}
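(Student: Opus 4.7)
The plan is to follow the strategy already used in \refThm{anCompThetaThm}: apply the Niwa--Shintani integral representation of the Shimura lift, substitute the Poincar\'e series expansion for $\Theta^{\#}_{NS}$ from \refThm{thetaPoincare}, and unfold against $\Gamma_0(4D_B|\Delta|)$.  By Bost's formula, the Fourier coefficients of $\widehat{\Phi}^o_{\widehat\omega}(\tau) := \la \widehat{\Phi}^o_{/o_k}(\tau), \widehat\omega\ra$ split into a geometric height contribution $h_{\widehat\omega}(\Zed^o(n)_{/o_k})$ and an archimedean contribution $\kappa^o(n,v) := \int_{\CB(\C^{\sigma_0})} Gr^{o,\sigma_0}(n,v) \cdot c_1(\widehat\omega)$, so the Shimura lift naturally splits into a height part, an archimedean part, and a constant term.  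For the non-constant coefficients, only indices $\ell$ with $|\Delta|\mid\ell$ survive the integration on $u$ (since $|\Delta|$ is squarefree), and the resulting inner integrals on $v$ are of the shape $\int_0^\infty v^{-1/2} [\cdots]\, e^{-\pi\eta^2 m^2/(4v) - 4\pi v |\Delta|^2\ell^2/m^2}\, dv$, evaluable in closed form via the Bessel $K$-function identities (Gradshteyn--Ryzhik 8.342, 8.468).  I would then match each part separately with the corresponding component of $\la\widehat\Phi^u,\widehat\omega\ra$.

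For the height piece, I would invoke the prequel \cite[Corollary 4.11]{San1}, which gives an equality of generic fibres
\[ \frac{1}{4h(k)} \sum_{[\phi]} \bigl( \Zed(|\Delta|\ell,\phi) + \Zed^*(|\Delta|\ell,\phi) \bigr)_{k} \ = \ \sum_{\alpha|\ell} \chi'(\alpha) \ \Zed^o(|\Delta|\ell^2/\alpha^2)_{/o_k}. \]
The Bessel calculation yields $H^o(\ell,m,\eta) = 0$ for $\ell<0$ and $h_{\widehat\omega}(\Zed^o(|\Delta|\ell^2/m^2)_{/o_k}) \cdot e^{-2\pi|\Delta|\ell\eta}$ for $\ell>0$; summing over $m|\ell$ against $\chi'(m)$ then reproduces $H^u(|\Delta|\ell,\eta)$ by the above identity.

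For the archimedean piece, I would substitute the Kudla--Rapoport--Yang expression from \cite[Prop.\ 12.1]{KRYcomp} for $\kappa^o(n,v)$ as an integral of $e^{-4\pi n v s}$ against a simple weight, swap the order of integration, and evaluate the inner $v$-integral using the same Bessel identities; the outcome factors as $\tfrac12 \bigl(\sum_{x\in \Omega^o(|\Delta|\ell^2/m^2)/\OBxone} |\Gamma_x|^{-1}\bigr)\cdot I^o(\ell,\eta)$ for a universal function $I^o(\ell,\eta)$.  On the unitary side, using the $U^1$-coordinate of \refLemma{hodgeUnifLemma} and the $U(B_{\R})$-invariance of $c_1(\widehat\omega)$, the integral in $K^u(|\Delta|\ell,\eta)$ reduces, for each $y$, to a computation on the unit disk which evaluates to exactly $\tfrac12 I^o(\ell,\eta)$ (with the same expression regardless of whether $y\in \Omega^+$ or $\Omega^-$).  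The counting is then matched by combining \refProp{OmegasCompProp} (which relates $|\Omega^\pm(|\Delta|\ell,\lie{a},\phi)/\OBxone|$ to $\rho(|\ell|/(c(x)\nu(x,\phi)))$ after summing over $\phi$ and $\lie{a}$) with \refLemma{rhoLemma} (which identifies $\sum_{\nu|D_B}\rho(|\ell|/(c\nu))$ with $\sum_{m||\ell|/c}\chi'(m)$).

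The main obstacle is the constant term.  Here I would isolate the $\ell=0$ part of the unfolded Shimura lift, in which the inner coefficient is $a^o(v) = -\la\widehat\omega,\widehat\omega\ra - \deg(\omega_k)(\log v + \log D_B)$, and apply twisted Poisson summation (in the form of \cite[(2.42)]{Cipra}) to the $m$-sum $\sum_{m\in\Z}\chi'(m)\, m\, e^{-\pi\eta^2 m^2 v^2/4}$ to introduce the Gauss sum $\check\chi'$.  A change of variable then turns the remaining integral into a Mellin transform, which, using $\int_0^\infty e^{-\pi u}\,du = 1/\pi$ and $\int_0^\infty (\log u)\, e^{-\pi u}\, du = -(\gamma+\log\pi)/\pi$, produces an explicit expression involving $L(1,\check\chi')$ and $L'(1,\check\chi')$; comparison with \refDef{uConstDef} then gives $A^o(\eta) = \la\widehat\Zed(0,\eta),\widehat\omega\ra$.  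This step is the most delicate because it is precisely what pins down the otherwise mysterious constant $A$ appearing in \refDef{uConstDef}.
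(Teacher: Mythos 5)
Your proposal is correct and follows essentially the same route as the paper: unfolding against the Poincar\'e expansion of $\Theta^{\#}_{NS}$, splitting the coefficients via Bost's formula into height, archimedean, and constant parts, matching the height terms via the cycle identity from the prequel, the archimedean terms via the Bessel evaluations together with \refProp{OmegasCompProp} and \refLemma{rhoLemma}, and the constant term via twisted Poisson summation and comparison with \refDef{uConstDef}. No substantive differences or gaps.
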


\end{document}